\newcommand{\intav}[1]{\mathchoice {\mathop{\vrule width 6pt height 3 pt depth  -2.5pt
\kern -8pt \intop}\nolimits_{\kern -6pt#1}} {\mathop{\vrule width
5pt height 3  pt depth -2.6pt \kern -6pt \intop}\nolimits_{#1}}
{\mathop{\vrule width 5pt height 3 pt depth -2.6pt \kern -6pt
\intop}\nolimits_{#1}} {\mathop{\vrule width 5pt height 3 pt depth
-2.6pt \kern -6pt \intop}\nolimits_{#1}}}
\theoremstyle{plain}
\newtheorem{Theorem}{Theorem}[section]
\newtheorem{Lemma}{Lemma}[section]
\newtheorem{Proposition}{Proposition}[section]
\newtheorem{Corollary}{Corollary}[section]
\newtheorem{Definition}{Definition}[section]
\newtheorem{Question}{Question}
\theoremstyle{definition}
\newtheorem{remark}{Remark}[section]
\numberwithin{equation}{section}
\numberwithin{figure}{section}
\numberwithin{remark}{section}
\begin{document}
\begin{sloppypar}

\title{Regularity for fully nonlinear degenerate parabolic equations with strong absorption}

	\author{Jo\~{a}o Vitor da Silva}
	\address{Departamento de Matem\'{a}tica, Universidade Estadual de Campinas - UNICAMP, Campinas-SP 13083-859, Brazil}
	\email{jdasilva@unicamp.br}

	\author{Feida Jiang$^*$}
	\address{School of Mathematics and Shing-Tung Yau Center of Southeast University, Southeast University, Nanjing 211189, P.R. China; \newblock Shanghai Institute for Mathematics and Interdisciplinary Sciences Shanghai 200433, P. R. China}
	\email{jiangfeida@seu.edu.cn}

	\author{Jiangwen Wang}
	\address{School of Mathematics and Shing-Tung Yau Center of Southeast University, Southeast University, Nanjing 211189, P.R. China}
	\email{jiangwen\_wang@seu.edu.cn}

	\subjclass[2010]{35R35, 35K65, 35D40}

	\date{\today}
	\thanks{*corresponding author. Jo\~{a}o Vitor da Silva, Feida Jiang and Jiangwen Wang contributed equally to this work.}
	
		\keywords{Fully nonlinear parabolic equations; dead-core problems; viscosity solutions; strong absorption}

	\begin{abstract}
In this paper, we investigate dead-core problems for fully nonlinear degenerate parabolic equations with strong absorption,
\begin{equation*}
    |Du|^{p} F(D^{2}u) - u_{t} = \lambda_{0}(x,t)\, u^{\mu}\, \chi_{\{u>0\}}(x,t)
    \qquad \text{in } \quad Q_{T} := Q \times (0,T),
\end{equation*}
where $0 \leq p < \infty$ and $0 < \mu < 1$. We establish a sharp and improved parabolic $C^{\alpha}$-regularity estimate along the free boundary $\partial \{ u > 0 \}$, where
\[
\alpha := \frac{2+p}{1+p-\mu} > 1 + \frac{1}{1+p}.
\]
Moreover, we establish weak geometric properties of solutions, such as non-degeneracy and uniform positive density. As an application, we obtain a Liouville-type theorem for entire solutions and gradient bounds. Finally, as a byproduct of our approach, we derive a novel $L^{\delta}$-average estimate for fully nonlinear singular elliptic equations and present a new formulation of the gradient decay property. It is worth noting that the results presented here extend those in da Silva {\it et al.} ({\it Pacific J. Math}., \textbf{300} (2019), 179--213) and ({\it J. Differential Equations}., \textbf{264} (2018), 7270--7293) to the degenerate setting, and can be viewed as a parabolic analogue of da Silva {\it et al.} ({\it Math. Nachr}., \textbf{294} (2021), 38--55) and Teixeira ({\it Math. Ann}., \textbf{364} (2016), 1121--1134).
Additionally, of independent mathematical interest, we emphasize that our manuscript establishes a comparison principle result and the compactness of viscosity solutions to fully nonlinear degenerate parabolic models with continuous and bounded forcing terms. These compactness and comparison properties serve as key ingredients in deriving enhanced regularity estimates along free boundary points for our model problem with strong absorption.
\end{abstract}

	\maketitle		

\section{Introduction}

In this work, we study geometric regularity estimates for degenerate fully nonlinear parabolic equations of the form
\begin{equation}\label{DCP}
    |Du|^{p} F(D^{2}u) - u_{t} = \lambda_{0}(x,t)\, u^{\mu}\chi_{\{u>0\}}(x,t)
    \quad \text{in} \quad Q_{T} := Q \times (0,T), \tag{{\bf DCP}}
\end{equation}
subject to appropriate boundary conditions, where $0 \leq p < \infty$, $0 < \mu < 1$, $T>0$, and $Q \subset \mathbb{R}^{n}$ is a bounded smooth domain. The coefficient $\lambda_{0}(x,t)$ denotes a positive weight function bounded above and below by positive constants, that is,
\[
  0 < \mathcal{M}_{1} \leq \lambda_{0}(x,t) \leq \mathcal{M}_{2} < \infty.
\]
The operator $F : \mathrm{Sym}(n) \to \mathbb{R}$ satisfies the following structural conditions:

\vspace{1mm}

\noindent
\label{F1} {\bf (F1).} $F$ is uniformly elliptic with $F(\mathrm{O}_n) = 0$; namely, there exist constants $0 < \lambda \leq \Lambda$ such that, for all $\mathrm{M}, \mathrm{N} \in \mathrm{Sym}(n)$,
\[
  \mathscr{P}^{-}_{\lambda,\Lambda}(\mathrm{M}-\mathrm{N}) \leq F(\mathrm{M}) - F(\mathrm{N})
  \leq \mathscr{P}^{+}_{\lambda,\Lambda}(\mathrm{M}-\mathrm{N}),
\]
where $\mathscr{P}^{\pm}$ denote the \textit{Pucci extremal operators} \cite{CC95}, i.e.,
\[
  \mathscr{P}^{+}_{\lambda,\Lambda}(\mathrm{X})
  := \lambda \sum_{e_{i}<0} e_{i}(\mathrm{X}) + \Lambda \sum_{e_{i}>0} e_{i}(\mathrm{X}),
  \quad \text{and} \quad
  \mathscr{P}^{-}_{\lambda,\Lambda}(\mathrm{X})
  := \lambda \sum_{e_{i}>0} e_{i}(\mathrm{X}) + \Lambda \sum_{e_{i}<0} e_{i}(\mathrm{X});
\]

\vspace{1mm}

\noindent
\label{F2} {\bf (F2).} $F$ is convex or concave;

\vspace{1mm}

\noindent
\label{F3} {\bf (F3).} $F \in C^{1,\kappa}$ for some $\kappa \in (0,1]$.

\vspace{2mm}

Over the past decades, various classes of parabolic equations have been employed to describe phenomena arising in chemical reactions, physical processes, and biological systems. A central theme concerns reaction–diffusion processes exhibiting single-phase transitions, where the existence of nonnegative solutions plays a crucial role; see, for instance, \cite{BS84} and references therein. A prototypical example is the isothermal catalytic reaction–diffusion model (see     \cite{Stakgold1986})
\begin{equation*}
\left\{
     \begin{aligned}
     & u_{t} = \Delta u - f(u) && \text{in } \quad Q_{T}, \\
     & u(x,t) = g(x,t) && \text{on } \quad  \partial Q \times (0,T), \\
     & u(x,0) = u_{0}(x) && \text{in }  \quad   \overline{Q},
     \end{aligned}
\right.
\end{equation*}
with the data satisfying
\[
  0 < u_{0} \in C^{0}(\overline{Q}), \quad g(x,t) = \mathcal{G} > 0, \quad
  u(x,0) = \mathcal{G} \quad \text{ for all }    x \in \partial Q,
\]
where $Q \subset \mathbb{R}^{n}$ is a smooth, bounded domain. In this context, \(u\) represents the concentration of the reactant.
The reactant becomes inactive (forming a dead core) when \(u\) reaches zero. The boundary condition indicates that the reactant is supplied at the boundary with a fixed concentration.

In such evolution problems, if $f$ is locally Lipschitz continuous, satisfies $f(s) > 0$ for $s > 0$ and $f(0) = 0$, then the maximum principle guarantees that nonnegative solutions remain strictly positive. However, when $f$ fails to be Lipschitz at the origin (e.g., $f(s) \simeq s^{\mu}$ for $\mu \in (0, 1)$), nonnegative solutions may develop a {\it plateau region}, a subset of positive measure where $u$ vanishes identically—commonly referred to as a {\it dead-core set} \cite{CW03}. Additionally, the reaction exhibits strong absorption, since the reaction rate
\(-f(u)/u \to -\infty\) as \(u \to 0^{+}\).

\vspace{2mm}

The study of parabolic equations often focuses on the temporal asymptotic behaviour of solutions near singular points. Among the key phenomena are blow-up (studied for the model $u_t = \Delta u + u^{\mu}$, for $\mu>1$), quenching (studied for the model $u_t = \Delta u - u^{-\mu}$, for $0<\mu$), extinction, and dead-core formation  (studied for the model $u_t = \Delta u - u^{\mu}$, for $0<\mu<1$). Unlike quenching, where certain derivatives blow up, dead-core solutions remain regular but reach zero in finite time (see \cite{Guo2013} for related topics).

Concerning modern trends, for the classical model in divergence form,
\begin{equation}\label{Sec:eq1}
  \Delta u - u_{t} = \lambda_{0} u^{\mu}\chi_{\{u>0\}}(x,t)
  \quad \text{in} \quad Q_{T},
\end{equation}
with $ 0 < \mu < 1$,
Choe-Weiss \cite{CW03} established several qualitative and quantitative properties of its solutions using variational methods. Later, da Silva {\it et al.} \cite{SOS18} derived geometric regularity estimates for degenerate parabolic equations of $q$-Laplacian type ($2 \leq q < \infty$) with strong absorption,
\begin{equation}\label{Sec:eq2}
  \Delta_{q}u - u_{t} = \lambda_{0} u^{\mu}\chi_{\{u>0\}}(x,t)
  \quad \text{in} \quad Q_{T},
\end{equation}
where $0 < \mu < 1$. Nevertheless, the techniques in \cite{CW03} and \cite{SOS18} cannot be directly applied to non-divergence type equations due to the absence of a variational framework and related key devices. The first advancement in this direction was obtained by da Silva {\it et al.} \cite{SO19}, who developed a geometric regularity theory for dead-core solutions to fully nonlinear parabolic equations through a modern, non-variational approach (see \cite{T16} for the elliptic counterpart).

\vspace{2mm}

Motivated by these works \cite{CW03}, \cite{SO19}, and \cite{SOS18}, our main objective is to investigate the geometric and analytic properties of dead-core solutions to \eqref{DCP}. To the best of our knowledge, sharp and improved regularity estimates for fully nonlinear degenerate parabolic dead-core problems have not yet been thoroughly addressed. In this sense, the results presented here can be regarded as a parabolic counterpart of \cite{SLR21} and \cite{T16} and as an extension of the analysis in \cite{SO19} and \cite{SOS18}. Indeed, as a preliminary motivation, recall the following non-divergence representation of the $q$-Laplacian:
\[
\Delta_q u
= |Du|^{\,q-2}\,\mathrm{Tr}\!\left[\left(\textbf{I}{\rm{d}_{n} } + (q-2)\frac{Du \otimes Du}{|Du|^2}\right) D^2u\right]
= \mathcal{G}_q(Du, D^2u),
\]
valid for every $q \ge 2$. This is a fully nonlinear operator with a degenerate structure, closely aligned with the model introduced in~\eqref{DCP}, which supports our choice to study such degenerate equations.

Therefore, our manuscript addresses that understanding these dead-core phenomena in a non-variational degenerate scenario is essential not only for fundamental regularity theory but also for possible applications in phenomena such as chemical reactions, heat transfer, and other nonlinear processes (cf. \cite{SO19}, \cite{SOS18}, \cite{Guo2013}, and \cite{Stakgold1986}).

\subsection{Main results}
We now establish the first main result, concerning sharp and improved regularity of solutions near the free boundary, which reads as follows.

\begin{Theorem}[{\bf Improved regularity along free boundary}]
\label{Thm1}
Suppose that $ F $ satisfies \hyperref[F1]{\bf (F1)}, \hyperref[F2]{\bf (F2)} and \hyperref[F3]{\bf (F3)}. Let $ u $ be a nonnegative and bounded viscosity solution to \eqref{DCP}, so that $ \partial_{t} u \geq 0 $ in $ Q_{T} $ (in the viscosity sense), and for every compact set $ K \Subset  Q_{T}  $. Then there exists a constant $ \mathrm{C}_0 > 0 $, depending only on $ n, \lambda, \Lambda, \mathcal{M}_{1}, \mathcal{M}_{2}, p, \mu $ and $ \mathrm{dist}(K, \partial_{\mathrm{par}} Q_{T}) $ such that for all $ (x_{0}, t_{0}) \in \partial \{u>0\} \cap K   $,
\begin{equation*}
  u(x,t) \leq \mathrm{C}_0 \|u\|_{L^{\infty}(Q_{T})} \mathrm{dist}_{p}((x,t), (x_{0}, t_{0}))^{\frac{2+p}{1+p-\mu}},
\end{equation*}
for all $ (x,t) $ sufficiently close to $ (x_{0}, t_{0}) $, where
\begin{equation*}
  \mathrm{dist}_{p}((x,t), (x_{0}, t_{0})) = |x-x_{0}| + |t-t_{0}|^{\frac{1}{\theta}}, \quad{and} \quad \theta =  \frac{(2+p)(1-\mu)}{1+p-\mu}.
\end{equation*}
\end{Theorem}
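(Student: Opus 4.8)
The plan is to establish the growth estimate by a contradiction/compactness argument combined with an iterative (dyadic scaling) scheme, which is the standard route for this kind of geometric regularity at free boundary points. First I would reduce matters to a normalized setting: by translating $(x_0,t_0)$ to the origin, rescaling $Q_T$, and dividing $u$ by $\|u\|_{L^\infty(Q_T)}$ plus a universal constant, I may assume $0\le u\le 1$, that the equation \eqref{DCP} holds in, say, $Q_1 := B_1\times(-1,0]$ with the same structural constants (one checks that the class of equations of the form $|Du|^pF(D^2u)-u_t = f$ with $\|f\|_\infty\le \mathcal M_2$ is invariant under the parabolic scaling $u\mapsto \rho^{-\alpha}u(\rho x,\rho^\theta t)$ precisely because $\alpha$ and $\theta$ are chosen so that $\rho^{(p+2)-\alpha(p+1)} = \rho^{\alpha\mu}$ and $\rho^{\theta-\alpha}=\rho^{\alpha\mu}$, i.e. $\alpha(1+p-\mu)=2+p$ and $\theta = \alpha(1-\mu)$). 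The one-sided bound $\partial_t u\ge -c_0 u^\mu\chi_{\{u>0\}}$ is also scaling-stable in this regime and will be needed to control the time derivative from below.

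The core is the discrete iteration: I claim there are universal constants $C_0>1$ and $0<\rho<1$ such that, for every $k\ge 0$,
\[
\sup_{Q_{\rho^k}} u \;\le\; C_0\,\rho^{k\alpha}.
\]
This is proved by induction on $k$. The base case $k=0$ is the normalization. For the inductive step, suppose it fails; then along a sequence one can rescale at the $k$-th generation, $v_k(x,t):=\rho^{-k\alpha}u(\rho^k x,\rho^{k\theta}t)$, so that $v_k$ solves an equation of the same class with right-hand side $\lambda_0(\cdot)v_k^\mu\chi_{\{v_k>0\}}$ bounded by $\mathcal M_2$, satisfies $\sup_{Q_1}v_k\le C_0$ and $\sup_{Q_\rho}v_k > C_0\rho^\alpha$, while still vanishing at the origin (which remains a free boundary point). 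Using the compactness/stability of viscosity solutions to these degenerate models stated in the excerpt (locally uniform limits of bounded solutions to uniformly elliptic/parabolic equations of this type are again solutions), extract a limit $v_\infty$ which is a nonnegative viscosity solution of a homogeneous-type limiting equation — here the key point is that because $\rho^{k\alpha}\to 0$ the inhomogeneous term degenerates and, after possibly re-normalizing, one lands on a problem to which the regularity theory for the homogeneous operator $|Dv|^pF(D^2v)-v_t=0$ applies, giving $v_\infty\in C^{1,\beta}_{\mathrm{par}}$ (or at least $C^{1,\mathrm{Lip}}$ in space, $C^{0,1}$ in the right parabolic sense) near $0$; since $v_\infty(0,0)=0$ and $Dv_\infty(0,0)=0$ (the latter because $0$ is an interior minimum of the nonnegative function $v_\infty$, using the one-sided time bound to kill $\partial_t v_\infty(0,0)$ as well), one gets $v_\infty(x,t)=o(\mathrm{dist}_p((x,t),0)^{\alpha})$ near the origin — in particular $\sup_{Q_\rho}v_\infty\le \tfrac12 C_0\rho^\alpha$ if $\rho$ is chosen small enough and then $C_0$ large enough. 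This contradicts $\sup_{Q_\rho}v_k>C_0\rho^\alpha$ passing to the limit, closing the induction. Finally, an elementary interpolation over the dyadic radii (for $r\in(\rho^{k+1},\rho^k]$ compare with generation $k$) upgrades the discrete bound $\sup_{Q_{\rho^k}}u\le C_0\rho^{k\alpha}$ to the continuous estimate $u(x,t)\le \mathrm{C}_0\,\mathrm{dist}_p((x,t),(x_0,t_0))^\alpha$, and unwinding the normalization reinstates the factor $\|u\|_{L^\infty(Q_T)}$ and makes the constant depend on $\mathrm{dist}(K,\partial_{\mathrm{par}}Q_T)$.

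The main obstacle I anticipate is the compactness step and the precise identification of the limiting equation: one must ensure that the degenerate gradient factor $|Dv_k|^p$ does not cause a loss of compactness (this is where conditions \hyperref[F2]{\bf (F2)}–\hyperref[F3]{\bf (F3)} enter, via the available $C^{1,\beta}_{\mathrm{loc}}$ estimates for $|Du|^pF(D^2u)-u_t=f$ with bounded $f$), and that the limit profile genuinely has the regularity needed to extract the $o(r^\alpha)$ decay — equivalently, that $\alpha$ does not exceed the regularity exponent of the homogeneous problem, which is exactly the content of the inequality $\alpha = \tfrac{2+p}{1+p-\mu} > 1+\tfrac1{1+p}$ recorded in the statement (so the naive $C^{1,1/(1+p)}$-type regularity is not enough and one genuinely uses the vanishing of the gradient at the free boundary point together with the flatness improvement). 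A secondary technical point is verifying the scaling invariance of the one-sided estimate $\partial_t u\ge -c_0 u^\mu\chi_{\{u>0\}}$ and using it to conclude $\partial_t v_\infty(0,0)\ge 0$, which combined with $v_\infty\ge0$, $v_\infty(0,0)=0$ forces the full first-order vanishing at the origin in the parabolic sense.
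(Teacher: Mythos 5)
Your overall architecture (translate, normalize, dyadic iteration, compactness at the critical generation, flatness improvement) is the right shape, and it is indeed what the paper does. However, two of the pivotal steps in your contradiction argument do not work as written, and they are precisely the places where the paper's proof is more careful.

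\medskip

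\textbf{The limit equation is not homogeneous under your scaling.} You rescale by the \emph{a priori} decay rate, $v_k(x,t)=\rho^{-k\alpha}u(\rho^kx,\rho^{k\theta}t)$. With $\alpha=\tfrac{2+p}{1+p-\mu}$ and $\theta=\alpha(1-\mu)$ this is exactly the scaling that leaves the absorption term invariant: the rescaled function solves
$|Dv_k|^p\widetilde F_k(D^2v_k)-\partial_t v_k=\lambda_0(\rho^k\cdot,\rho^{k\theta}\cdot)\,v_k^\mu\chi_{\{v_k>0\}}$, whose right-hand side is of order one wherever $v_k$ is of order one. So $\rho^{k\alpha}\to 0$ does \emph{not} make the inhomogeneity degenerate, and the limit $v_\infty$ solves the same absorption equation, not $|Dv|^pF(D^2v)-v_t=0$. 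The paper avoids this by normalizing $v_k$ by the actual supremum $L_{1/2^{j_k+1}}[u_k]$ rather than by $(1/2^{j_k})^\alpha$, and by assuming for contradiction that this supremum exceeds $k(1/2^{j_k})^\alpha$: then the coefficient in front of $v_k^\mu$ is $(1/2^{(p+2)j_k})/(L_{1/2^{j_k+1}}[u_k])^{1+p-\mu}\lesssim k^{-(1+p-\mu)}\to 0$, and the limit really is homogeneous. The contradiction hypothesis is what kills the source term; your scaling has no mechanism to do so.

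\medskip

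\textbf{Even if the limit were homogeneous, your flatness improvement would not follow from the available regularity.} You want to deduce $v_\infty=o(\mathrm{dist}_p(\cdot,(0,0))^\alpha)$ from $v_\infty(0,0)=0$, $Dv_\infty(0,0)=0$, and interior $C^{1,\beta}$ regularity of the homogeneous problem. But the sharp interior regularity for $|Dv|^pF(D^2v)-v_t=0$ is $C^{1,1/(1+p)}$ in space (cf.\ Proposition~\ref{Se2:Pro1} and Remark~\ref{Sec2:rmk2}), whereas the target exponent satisfies $\alpha>1+\tfrac{1}{1+p}$ (this is exactly what makes the theorem an \emph{improvement}; see Remark~1.4). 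Thus $C^{1,\beta}$ regularity of the limit profile together with a vanishing gradient at the origin gives only $o(r^{1+\beta})$ with $1+\beta<\alpha$, which is not enough. The paper sidesteps this by running a Liouville-type argument instead of a decay-by-regularity argument: it establishes a $C_t^{1/(2+p)}$ estimate (Proposition~\ref{Se2:Prop2}) whose modulus degenerates to zero along the sequence, so the limit $v_0$ is time-independent; then the cutting lemma reduces $|Dv_0|^p\widetilde F_0(D^2v_0)=0$ to $\widetilde F_0(D^2v_0)=0$; and the strong minimum principle forces $v_0\equiv 0$, contradicting the normalization $L_{1/2}[v_0]\ge 1$ that is built into the sup-based rescaling. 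No quantitative regularity of the limit is used.

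\medskip

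Two smaller points: the paper's iteration is restricted to the ``good'' generations $j\in\mathbb{V}_{p,\mu}[u]$ where the sup on consecutive dyadic cylinders is comparable (this comparability needs non-degeneracy, Theorem~\ref{Thm2}, and the iteration in Theorem~\ref{Sec3:thm1} deals separately with the remaining generations). Your iteration ignores this issue. Finally, your concluding dyadic interpolation would give the estimate on backward cylinders $Q_r^-$ centered at the free boundary point, but the theorem asserts the bound for all $(x,t)$ near $(x_0,t_0)$ including $t>t_0$; the paper closes this gap with a separation-of-variables barrier $\Phi$ and the comparison principle (Lemma~\ref{Se2:lem1}) on $Q_1^+$, a step your proposal does not replicate.
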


We also give a lower growth estimate for dead-core solutions of \eqref{DCP}.

\begin{Theorem}[{\bf Non-degeneracy}]
\label{Thm2}
Suppose $ F $ satisfies \hyperref[F1]{\bf (F1)}. Let $ u $ be a viscosity solution to \eqref{DCP}. Then for every $ (x_{0}, t_{0}) \in \overline{\{u>0\}} $ and $ Q_{r}(x_{0}, t_{0}) \Subset  Q_{T} $, there holds
\begin{equation*}
  \sup_{\partial_{\mathrm{par}} Q_{r}^{-}(x_{0}, t_{0})} u(x,t) \geq \mathrm{C}_{0}^{*} r^{\frac{2+p}{1+p-\mu}}
\end{equation*}
for a positive constant $ \mathrm{C}_{0}^{*} = \mathrm{C}_{0}^{*}(n, \lambda, \Lambda, \mathcal{M}_{1}, \mathcal{M}_{2}, p, \mu) $.
\end{Theorem}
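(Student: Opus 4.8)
The plan is to argue by a barrier/comparison construction centered at an interior positive point, followed by a continuity sweep to reach boundary points of $\{u>0\}$. First I would reduce to the case $(x_0,t_0)\in\{u>0\}$ with $u(x_0,t_0)>0$; the general case $(x_0,t_0)\in\overline{\{u>0\}}$ then follows by continuity of $u$ and letting positive points converge to $(x_0,t_0)$, since the estimate is closed under such limits. By the usual parabolic scaling that is already adapted to \eqref{DCP} — replacing $u$ by $r^{-\frac{2+p}{1+p-\mu}}u(x_0+r\,x,\,t_0+r^{\theta}t)$, which preserves the class of solutions because of the homogeneities $|Du|^pF(D^2u)$ (degree $2+p$ in the second-order scaling once one accounts for the gradient weight) and the $u^\mu$ right-hand side — it suffices to prove that $\sup_{\partial_{par}Q_1^-(0,0)}u\ge \mathrm{C}_0^*$ whenever $u(0,0)>0$ solves \eqref{DCP} in $Q_1$.

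The heart of the argument is to exhibit an explicit subsolution. I would look for a function of the separated, self-similar form
\[
\Phi(x,t) \;=\; \mathrm{c}\,\bigl(|x|^{\beta} + \Lambda_0\,(-t)_+^{\beta/\theta}\bigr)
\]
with $\beta=\frac{2+p}{1+p-\mu}$ (so that $\Phi$ has the same growth exponent as the claimed bound), and choose the constant $\mathrm{c}=\mathrm{c}(n,\lambda,\Lambda,\mathcal M_1,\mathcal M_2,p,\mu)>0$ small enough that
\[
|D\Phi|^{p}\,\mathscr{P}^{-}_{\lambda,\Lambda}(D^{2}\Phi) - \Phi_{t} \;\ge\; \mathcal{M}_{2}\,\Phi^{\mu}
\qquad\text{in } Q_1^{-}\setminus\{x=0\},
\]
which by \hyperref[F1]{\bf (F1)} forces $\Phi$ to be a viscosity subsolution of \eqref{DCP} there (the singularity at $x=0$ being removable since $\Phi$ is continuous and the inequality holds in the viscosity sense across it, as $\Phi$ is a minimum there). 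The computation is the routine one: $D\Phi\sim |x|^{\beta-1}$, $D^2\Phi$ has eigenvalues of size $|x|^{\beta-2}$, so the elliptic term scales like $|x|^{(p+1)(\beta-1)-1}=|x|^{\beta\mu}$ by the defining relation for $\beta$, which matches $\Phi^{\mu}\sim |x|^{\beta\mu}$; the time term is lower order near $x=0$ and is absorbed by choosing $\Lambda_0$ and then $\mathrm c$ appropriately, using $\theta>1$ so that $(-t)^{\beta/\theta}$ does not overpower $|x|^{\beta}$. Now suppose, for contradiction, that $\sup_{\partial_{par}Q_1^-(0,0)}u<\mathrm{c}\,\varepsilon$ for a suitable small $\varepsilon$; after shrinking the cylinder one gets $u\le\Phi$ on the parabolic boundary of a slightly smaller cylinder $Q_\rho^-$, while $u(0,0)>0=\Phi(0,0)$ — no contradiction yet, so instead I apply the comparison principle stated in the paper to the pair $(u,\Phi)$ on $Q_\rho^-$ after first noting $u$ is a \emph{supersolution} of the homogeneous equation on $\{u>0\}$ and $\Phi$ a subsolution, then comparing on the connected component of $\{u>0\}$ containing $(0,0)$: since $u>\Phi=0$ on the lateral part of that component's boundary that lies in the interior and $u\ge \Phi$ on $\partial_{par}Q_\rho^-$, the comparison principle gives $u\ge\Phi$ throughout, hence $\sup_{\partial_{par}Q_\rho^-}u\ge\sup\Phi\ge \mathrm c\rho^{\beta}$, a lower bound of the desired form; scaling back yields the theorem with $\mathrm C_0^*$ proportional to $\mathrm c$.

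The main obstacle I anticipate is the correct handling of the comparison step at the free boundary: one must compare $u$ and $\Phi$ on the component of $\{u>0\}$, where the equation is the \emph{nondegenerate-in-$u$} (homogeneous) fully nonlinear parabolic equation $|Du|^pF(D^2u)-u_t=\lambda_0 u^\mu$, and ensure that the comparison principle from the excerpt applies despite (i) the gradient degeneracy $|Du|^p$ and (ii) the fact that $\Phi$ is only Lipschitz, not $C^2$, across $\{x=0\}$. The degeneracy is dealt with by the theory for $|Du|^pF(D^2u)$ operators (the equation has a well-posed comparison principle for continuous sub/supersolutions, which the paper asserts), and the non-smoothness of $\Phi$ is handled by the viscosity formulation together with the observation that $x=0$ is an interior minimum of $\Phi$ where the subsolution inequality is trivially satisfied (no admissible test function touches from above there with a nonzero gradient constraint violated). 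A secondary technical point is verifying the sign condition $\partial_t u\ge -c_0 u^\mu$ is \emph{not} needed here — Theorem \ref{Thm2} only requires \hyperref[F1]{\bf (F1)} — so one must make sure the barrier comparison uses only one-sided information on $u$ as a supersolution of the homogeneous equation on $\{u>0\}$, which indeed it does.
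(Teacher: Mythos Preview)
Your barrier is pointed in the wrong direction, and this breaks the logic of the comparison step. For non-degeneracy one needs a \emph{supersolution} $\widetilde\Phi$ that vanishes at $(x_0,t_0)$ and is $\gtrsim r^{\frac{2+p}{1+p-\mu}}$ on $\partial_{par}Q_r^-(x_0,t_0)$: then if $u\le\widetilde\Phi$ on the whole parabolic boundary, comparison (with $u$ a subsolution, $\widetilde\Phi$ a supersolution) forces $u\le\widetilde\Phi$ inside, contradicting $u(x_0,t_0)>0=\widetilde\Phi(x_0,t_0)$; hence some boundary point has $u\ge\widetilde\Phi\gtrsim r^{\frac{2+p}{1+p-\mu}}$. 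You instead construct a \emph{subsolution} $\Phi$, and from the contradiction hypothesis $u\le\Phi$ on $\partial_{par}Q_\rho^-$ the comparison principle says nothing. Your attempted repair---comparing on the component of $\{u>0\}$---fails because $\Phi$ does not vanish on $\partial\{u>0\}$ (it vanishes only at the single point $(0,0)$), so the ordering $u>\Phi=0$ you assert on that part of the boundary is false; and you simultaneously invoke $u\ge\Phi$ on $\partial_{par}Q_\rho^-$, which is the opposite of your contradiction hypothesis.

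There are two secondary issues. First, the purely additive profile $c\bigl(|x|^{\beta}+\Lambda_0(-t)^{\beta/\theta}\bigr)$ does not give a clean differential inequality because $(a+b)^\mu$ is not comparable termwise to $a^\mu+b^\mu$ from the side needed; the paper instead uses the power-wrapped form
\[
\widetilde\Phi(x,t)=\mathcal{C}_1\Bigl(|x-x_0|^{\frac{2+p}{1+p}}+\mathcal{C}_2(t_0-t)^{\frac{1+p-\mu}{(1+p)(1-\mu)}}\Bigr)^{\frac{1+p}{1+p-\mu}},
\]
with $\mathcal{C}_1,\mathcal{C}_2$ small enough that $|D\widetilde\Phi|^{p}F(D^{2}\widetilde\Phi)-\widetilde\Phi_{t}\le\mathcal{M}_1\widetilde\Phi_+^{\mu}\le\lambda_0\widetilde\Phi_+^{\mu}$; the exponents are chosen so every term carries a common factor and the inequality closes exactly. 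Second, your claim that $\theta>1$ is not always valid: $\theta\ge1$ holds only when $\mu\le\frac{1}{1+p}$, so that heuristic for absorbing the time term is unreliable in the full range $0<\mu<1$.
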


Before proceeding further, we make the following heuristic remarks.

\begin{remark}
The proof of Theorem \ref{Thm1} involves the construction of a  delicate barrier function defined by
\begin{equation*}
  \Phi(x,t)= \mathcal{C} \bigg(\mathcal{A}|x|^{\frac{2+p}{1+p}}+\mathcal{B} t^{\frac{1+p-\mu}{(1+p)(1-\mu)}}\bigg)^{\frac{1+p}{1+p-\mu}},
\end{equation*}
for universal constants $ \mathcal{C}, \mathcal{A}, \mathcal{B}> 0  $, together with an application of a comparison principle (see Lemma \ref{Se2:lem1}). We remark that the condition $ 0 <  \mu < 1 $ is essential to prevent the barrier function $ \Phi(x,t) $ from blowing up as $ t \rightarrow 0^{+} $. This reveals why the scope of $ \mu $ is not $ (0, 1+p) $, and also explains the differences compared to the results in literature \cite{SLR21} and \cite{T16}.
\end{remark}

\begin{remark}
Note that the absence of a strong maximum principle for problem \eqref{DCP} when $ 0 <  \mu < 1 $, can lead to the emergence of a dead-core domain of solutions to \eqref{DCP}. Indeed, the following two particular  solutions
\begin{equation*}
  u(x,t) = \big[-(1-\mu)\lambda_{0}(x_{0}, t_{0}) (t_{0}-t)     \big]_{+}^{\frac{1}{1-\mu}},
\end{equation*}
and
\begin{equation*}
 u(x,t) =  \mathrm{C}_{p,\mu,\lambda_{0}}(x_{i})_{\pm}^{\frac{2+p}{1+p-\mu}}, \ \ i=1,2,\cdots, n, \quad \text{with} \quad \mathrm{C}_{p,\mu,\lambda_{0}} = \bigg [ \frac{\lambda_{0}(x_{0}, t_{0})(1+p-\mu)^{2+p}}{(2+p)^{1+p}(1+\mu)} \bigg ]^{\frac{1}{1+p-\mu}}
\end{equation*}
are viscosity solutions to \eqref{DCP}. These examples demonstrate that dead-core solutions of \eqref{DCP} exist.
\end{remark}

\begin{remark}
The solutions to \eqref{DCP} have a refined point-wise behaviour as follows:
$$   \sup_{Q_{r}(x_{0}, t_{0})} u(x,t) \lesssim r^{\frac{2+p}{1+p-\mu}}    $$
along the free boundary. Theorem \ref{Thm1} unveils that the solutions are expected to be $ C^{ \big \lfloor \frac{2+p}{1+p-\mu} \big \rfloor, \frac{\big \lfloor \frac{2+p}{1+p-\mu} \big \rfloor}{\theta}} $ at the free boundary points, where
\begin{equation*}
  \theta:= \theta(p,\mu):=\frac{(2+p)(1-\mu)}{1+p-\mu}  = 1+ \frac{1-\mu(p+1)}{p+1-\mu}\leq 2.
\end{equation*}
We point out that for $ p \geq 0 $:
\begin{equation*}
  \theta(p,\mu):=\frac{(2+p)(1-\mu)}{1+p-\mu}  > 0  \quad    \Longleftrightarrow  \quad   \mu < 1,
\end{equation*}
which validates that assumption $ 0 < \mu < 1 $ is very necessary. From \cite[Theorem 1.3]{LLYZ25}, we obtain $ C^{1,1/(1+p)}$ spatial regularity. Then we can observe that for $ 0  \leq p < \infty $, $  0 < \mu < 1 $,
\begin{equation*}
  \frac{2+p}{1+p-\mu}  >  1+ \frac{1}{1+p}  \quad    \Longleftrightarrow  \quad   \mu > 0.
\end{equation*}
Thus, we derive better regularity estimates along $ \partial \{ u > 0\} \cap Q_{T} $ than the regularity available currently(cf. \cite[Theorem 1.1]{LLY24}, \cite[Theorem 1.3]{LLYZ25}). Also, it is obvious that $ \frac{2+p}{1+p-\mu} > 2 $ provided $ 0 \leq p < 2\mu $, namely, viscosity solutions become classical solutions along $ \partial \{ u > 0\} \cap Q_{T} $.
\end{remark}

\begin{remark}
For the singular case $ -1 < p < 0 $, the main challenge in obtaining regularity estimates from Theorem \ref{Thm1} is that the methods here do not work for Lemma \ref{Se3:lem1}. For this reason, the treatment of the singular setting requires the development of new approaches and modern techniques.
\end{remark}

\begin{remark}
The analysis presented in this paper extends without difficulty to a more general equation of the form
\begin{equation*}
  |Du|^{p} F(D^{2}u) - u_{t} = f(u)  \quad  \text{in}  \quad  Q_{T}:= Q \times (0,T),
\end{equation*}
where $ f \in C(\mathbb{R}^{+}) $, non-decreasing and $ 0 \leq f(\delta t) \leq \mathrm{M}_0 \delta^{\gamma} f(t)  $, with $ \mathrm{M}_0 > 0 $ and $ 0 < \gamma < 1 $. Some interesting examples include $ f(t) = e^{t} -1 $ and $ f(t) = \log(1+t^{2}) $, see \cite[Section 7]{DT20} and \cite[Section 3]{SO19} for more examples.
\end{remark}

Next, we shall develop a finer control for dead-core solutions close to free boundary points.

\begin{Corollary}\label{Sec1:coro1}
Suppose that $ F $ satisfies \hyperref[F1]{\bf (F1)}, \hyperref[F2]{\bf (F2)} and \hyperref[F3]{\bf (F3)}. Let $ u $ be a nonnegative, bounded viscosity solution to \eqref{DCP} and $ Q' \Subset  Q_{T} $. For given $ (x_{0}, t_{0}) \in \{u>0\} \cap Q' $, there exist universal constants $ \mathrm{C}^{\ast}>0 $ and $ \mathrm{C}_{\ast}>0 $ such that
\begin{equation*}
  \mathrm{C}_{\ast} \mathrm{dist}_{p}((x_{0}, t_{0}), \partial \{u>0\})^{\frac{2+p}{1+p-\mu}}  \leq u(x_{0}, t_{0}) \leq \mathrm{C}^{\ast} \mathrm{dist}_{p}((x_{0}, t_{0}), \partial \{u>0\})^{\frac{2+p}{1+p-\mu}}.
\end{equation*}
\end{Corollary}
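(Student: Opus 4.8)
The plan is to combine the upper estimate of Theorem \ref{Thm1} with the non-degeneracy estimate of Theorem \ref{Thm2}, localized appropriately at a free boundary point closest to $(x_0,t_0)$. First I would fix $(x_0,t_0)\in\{u>0\}\cap Q'$ and set $d:=\text{dist}_p((x_0,t_0),\partial\{u>0\})$; by compactness of the (relative) free boundary inside $Q_T$ and the assumption $Q'\Subset Q_T$, there is a point $(z_0,s_0)\in\partial\{u>0\}\cap K$ (for a suitable compact $K$ containing $Q'$ with $\text{dist}(K,\partial_{par}Q_T)$ controlled) realizing this parabolic distance, i.e. $\text{dist}_p((x_0,t_0),(z_0,s_0))=d$, and $d$ is comparable to $\text{dist}(Q',\partial_{par}Q_T)$-admissible so that the constants from Theorems \ref{Thm1} and \ref{Thm2} apply uniformly. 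For the \emph{upper bound}, I would simply apply Theorem \ref{Thm1} at the free boundary point $(z_0,s_0)$, evaluated at $(x,t)=(x_0,t_0)$: since $(x_0,t_0)$ is within the stated smallness neighborhood of $(z_0,s_0)$ (after a preliminary reduction, shrinking $Q'$ if needed, or rescaling), we get $u(x_0,t_0)\leq \mathrm{C}_0\|u\|_{L^\infty(Q_T)}\,d^{\frac{2+p}{1+p-\mu}}$, which yields $\mathrm{C}^\ast$.

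For the \emph{lower bound}, the idea is to apply Theorem \ref{Thm2} on the parabolic cylinder $Q_r(x_0,t_0)$ with radius $r$ chosen as a fixed fraction of $d$ — say $r=d/2$ or a comparable quantity measured in the parabolic metric $\text{dist}_p$ — so that $Q_r(x_0,t_0)\Subset\{u>0\}$ and also $Q_r(x_0,t_0)\Subset Q_T$. Since $(x_0,t_0)\in\overline{\{u>0\}}$ trivially (it lies in $\{u>0\}$), Theorem \ref{Thm2} gives $\sup_{\partial_{par}Q_r^-(x_0,t_0)}u\geq \mathrm{C}_0^\ast r^{\frac{2+p}{1+p-\mu}}\simeq \mathrm{C}_0^\ast d^{\frac{2+p}{1+p-\mu}}$. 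To convert this supremum-on-a-shell bound into a \emph{pointwise} lower bound at $(x_0,t_0)$ itself, I would invoke a Harnack-type inequality or the interior estimates available for the degenerate operator $|Du|^pF(D^2u)-u_t$ restricted to the region $\{u>0\}$, where the right-hand side $\lambda_0 u^\mu$ is bounded (indeed $u$ is bounded, so $u^\mu\in L^\infty$ there): inside $Q_r(x_0,t_0)$ the equation is uniformly parabolic–degenerate with bounded forcing, and the $C^{1,1/(1+p)}$ spatial regularity from \cite[Theorem 1.3]{LLYZ25} together with a covering/chaining argument along a parabolic path from the shell to the center transfers the growth. Alternatively — and this is cleaner — I would rescale: define $v(y,\tau):=r^{-\frac{2+p}{1+p-\mu}}u(x_0+ry,\,t_0+r^\theta\tau)$, note $v$ solves an equation of the same type with the same structural constants and with bounded right-hand side, and apply a uniform positive-density/Harnack statement to the normalized solution to conclude $v(0,0)\gtrsim 1$, i.e. $u(x_0,t_0)\gtrsim d^{\frac{2+p}{1+p-\mu}}$, giving $\mathrm{C}_\ast$.

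The main obstacle I anticipate is the passage from the boundary-of-cylinder supremum in Theorem \ref{Thm2} to the pointwise value at the center $(x_0,t_0)$: for parabolic problems the backward-in-time cylinder $Q_r^-$ and the precise geometry of $\partial_{par}Q_r^-$ mean one cannot naively apply an elliptic Harnack inequality, and the degeneracy $|Du|^p$ prevents a direct appeal to the classical Krylov–Safonov parabolic Harnack. I would handle this either by the scaling argument above combined with an already-available interior Harnack for $\{u>0\}$ where the equation is non-degenerate in the relevant sense (or by citing the intrinsic/geometric Harnack for such degenerate parabolic equations), or, if such a statement is not at hand, by a direct barrier argument: construct a subsolution in $Q_r(x_0,t_0)$ that is controlled from above by $u$ on the parabolic boundary (using the shell estimate) and bounded below by a fixed multiple of $d^{\frac{2+p}{1+p-\mu}}$ at the center. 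The remaining steps — the choice of $r$ comparable to $d$, the verification that the cylinder stays inside $\{u>0\}$ and inside $Q_T$, and bookkeeping of the universal constants — are routine once the Harnack-type transfer is in place.
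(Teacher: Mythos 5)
Your upper bound is exactly what the paper does: locate the nearest free boundary point and read off Theorem \ref{Thm1}. For the lower bound, however, you take a genuinely different route from the paper, and the route as sketched has a real gap that you yourself flag but do not close.

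The paper's proof of the lower estimate is a compactness/contradiction argument: assume a sequence $(x_k,t_k)$ with $u(x_k,t_k)\le k^{-1}d_k^{\frac{2+p}{1+p-\mu}}$, rescale by $d_k:=\mathrm{dist}_p((x_k,t_k),\partial\{u>0\})$ to obtain normalized solutions $v_k$ on $Q_1$, show $\{v_k\}$ is uniformly bounded via Theorem \ref{Thm1}, and then play off the vanishing of $v_k$ (coming from the contradiction hypothesis together with the uniform modulus of continuity of Proposition \ref{Se2:Prop2}) against the lower bound $\sup_{\partial_{par}Q_1^-} v_k \ge \mathrm{C}_0^*$ that Theorem \ref{Thm2} yields for the rescaled functions. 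No Harnack machinery enters. Your proposal instead tries to propagate the shell supremum from Theorem \ref{Thm2} directly to the center point $(x_0,t_0)$ via a Harnack-type inequality. This is where the argument breaks: the supremum on $\partial_{par}Q_r^-(x_0,t_0)$ may be attained on the lateral part at the final time $t_0$, i.e.\ at some $(\bar y,t_0)$ with $|\bar y - x_0|=r$. A forward-in-time parabolic Harnack chain starting from $(\bar y,t_0)$ can only reach the spatial center at a strictly later time $t_0+\tau$, not at $t_0$ itself; and the time-slice Harnack of the type stated later in the paper as Theorem \ref{Sec6:thm6.1} is proved under the additional hypothesis $0\le \partial_t u \le c_0 u^\mu$, which is strictly stronger than the one-sided bound $\partial_t u \ge -c_0 u^\mu$ inherited from Theorem \ref{Thm1}, so you cannot invoke it without changing the hypotheses of the corollary. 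One natural repair would be to apply Theorem \ref{Thm2} at a shifted center $(x_0,t_0-r^\theta)$ (legitimate since $Q_d(x_0,t_0)\subset\{u>0\}$ is a two-sided cylinder), so that the shell supremum lives at times $\le t_0 - r^\theta$ and a forward chain can reach $(x_0,t_0)$; but then you still owe a genuine forward Harnack inequality for the degenerate operator $|Du|^p F(D^2u)-\partial_t$ on the positivity set, which is not an ingredient you can take for granted and which the paper deliberately avoids.

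Your fallback barrier argument has a separate and more decisive defect: Theorem \ref{Thm2} gives only $\sup_{\partial_{par}Q_r^-}u \ge \mathrm{C}_0^* r^{\frac{2+p}{1+p-\mu}}$, i.e.\ $u$ is large at \emph{some} point of the parabolic boundary. To run the comparison principle (Lemma \ref{Se2:lem1}) against a subsolution $\psi$ on $Q_r^-(x_0,t_0)$ you would need $u\ge\psi$ on the \emph{entire} parabolic boundary, hence a uniform pointwise lower bound there, which the shell estimate simply does not provide. This is precisely the kind of difficulty the paper's blow-up argument is designed to sidestep: by normalizing and extracting a limit one only needs the non-degeneracy to persist in the limit, not a pointwise lower bound along a shell.
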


By a standard argument (cf.\cite{SO19}), Theorems \ref{Thm1} and \ref{Thm2} imply the following result:
\begin{Corollary}[{\bf Positive Lebesgue density of $ \{u>0\}$}]\label{Sec1:coro2}
Suppose that $ F $ satisfies \hyperref[F1]{\bf (F1)}, \hyperref[F2]{\bf (F2)} and \hyperref[F3]{\bf (F3)}. Let $ u $ be the viscosity solution to \eqref{DCP}. Then, there exists a positive constant $ \zeta = \zeta(n, \lambda, \Lambda, \mathcal{M}_{2}, \|u\|_{L^{\infty}(Q_{1})} ) $ such that for all $ (x_{0}, t_{0}) \in  \overline{\{u>0\}} $ and $ 0 < r <1 $ such that $ Q_{r}(x_{0}, t_{0}) \subset Q_{1/2} $, the inclusion
 \begin{equation*}
   Q_{\zeta r}(\widetilde{x}, \widetilde{t}) \subset Q_{r}(x_{0}, t_{0}) \cap \{u>0\}
 \end{equation*}
holds for some $ (\widetilde{x}, \widetilde{t}) \in Q_{r}^{-}(x_{0}, t_{0}) $.
\end{Corollary}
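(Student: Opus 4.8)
The plan is to combine the upper bound from Theorem \ref{Thm1} with the non-degeneracy estimate from Theorem \ref{Thm2}, following the classical free-boundary density argument (cf. \cite{SO19}). Fix $(x_0,t_0) \in \overline{\{u>0\}}$ and a radius $0<r<1$ with $Q_r(x_0,t_0) \subset Q_{1/2}$. First I would invoke Theorem \ref{Thm2} on the half-cylinder $Q_{r}(x_0,t_0)$: there exists a point $(\widetilde x, \widetilde t) \in \partial_{par} Q_r^-(x_0,t_0)$ (which we may take in $Q_r^-(x_0,t_0)$ after a harmless radius reduction) such that $u(\widetilde x, \widetilde t) \geq \mathrm{C}_0^\ast\, r^{\frac{2+p}{1+p-\mu}}$. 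In particular $(\widetilde x, \widetilde t) \in \{u>0\}$, so it makes sense to measure its parabolic distance to the free boundary.

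Next I would run the contrapositive of the growth bound. By Theorem \ref{Thm1} (or more directly Corollary \ref{Sec1:coro1}), if $d := \mathrm{dist}_p((\widetilde x,\widetilde t), \partial\{u>0\})$ then $u(\widetilde x,\widetilde t) \leq \mathrm{C}^\ast \|u\|_{L^\infty}\, d^{\frac{2+p}{1+p-\mu}}$. Comparing the two inequalities yields
\[
\mathrm{C}_0^\ast\, r^{\frac{2+p}{1+p-\mu}} \leq \mathrm{C}^\ast \|u\|_{L^\infty}\, d^{\frac{2+p}{1+p-\mu}},
\]
hence $d \geq \zeta_0 r$ with $\zeta_0 := \left(\mathrm{C}_0^\ast / (\mathrm{C}^\ast \|u\|_{L^\infty})\right)^{\frac{1+p-\mu}{2+p}}$, a constant depending only on the indicated quantities. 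Thus the parabolic cylinder $Q_{\zeta_0 r/2}(\widetilde x,\widetilde t)$ is contained in $\{u>0\}$: indeed, by definition of $\mathrm{dist}_p$ and the fact that $\{u=0\}$ is closed, every point within parabolic distance $\zeta_0 r$ of $(\widetilde x,\widetilde t)$ lies in $\{u>0\}$, and shrinking the cylinder by a further universal factor also guarantees $Q_{\zeta_0 r/2}(\widetilde x,\widetilde t) \subset Q_r(x_0,t_0)$ since $(\widetilde x,\widetilde t)$ is already in the lower half-cylinder. Setting $\zeta := \zeta_0/2$ gives the claimed inclusion.

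The only genuinely delicate point is reconciling the geometry: Theorem \ref{Thm2} produces a point on the parabolic boundary $\partial_{par}Q_r^-$, whereas the conclusion demands an interior point $(\widetilde x,\widetilde t) \in Q_r^-(x_0,t_0)$ around which a full parabolic cylinder still fits inside $Q_r(x_0,t_0)$. I would handle this by first applying Theorem \ref{Thm2} on the slightly smaller cylinder $Q_{r/2}(x_0,t_0)$, so that the realizing point sits at parabolic distance at most $r/2$ from the center and the remaining room of size $r/2$ absorbs the cylinder $Q_{\zeta r}(\widetilde x,\widetilde t)$ once $\zeta$ is taken small enough (which it is, since $\zeta_0 \leq$ some universal bound because $\mathrm{C}_0^\ast \leq \mathrm{C}^\ast\|u\|_{L^\infty}$ by consistency of the two estimates on any fixed cylinder). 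The constant $\zeta$ then depends only on $n,\lambda,\Lambda,\mathcal{M}_2,\|u\|_{L^\infty(Q_1)}$ (through $p,\mu$ implicitly, as in the statement), completing the proof.
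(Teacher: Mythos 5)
Your argument is correct, and it is precisely the "standard argument" the paper points to via \cite{SO19}: combine the non-degeneracy lower bound of Theorem~\ref{Thm2} (applied on $Q_{r/2}(x_0,t_0)$ so the realizing point lies in the interior of $Q_r^-(x_0,t_0)$) with the upper growth bound of Theorem~\ref{Thm1}/Corollary~\ref{Sec1:coro1} at the nearest free-boundary point, compare the two powers of $r$ and $d$ to force $d \gtrsim r$, and then convert the parabolic-distance bound into a cylinder inclusion using $Q_{\rho}(\widetilde x,\widetilde t)\subset\{\mathrm{dist}_p(\cdot,(\widetilde x,\widetilde t))<2\rho\}$ together with connectivity of the parabolic ball. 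The only cosmetic looseness is that you first invoke Theorem~\ref{Thm2} on $Q_r$, derive $d\ge\zeta_0 r$, and only afterwards correct to $Q_{r/2}$ to guarantee room for $Q_{\zeta r}(\widetilde x,\widetilde t)\subset Q_r(x_0,t_0)$; performing the reduction to $Q_{r/2}$ from the outset (which you do in the final paragraph) removes the apparent inconsistency, and the resulting $\zeta$ has the stated universal dependence.
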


We observe that Corollary \ref{Sec1:coro2} guarantees that the free boundary cannot have Lebesgue points. In other words, $  \mathcal{L}^{n+1}(\partial \{u>0\}\cap K) = 0    $ for any compact set $ K \subset Q_{1} $.

Next, we shall present, as a straightforward consequence of the previous results (cf.\cite{SO19}),
that the free boundary is a porous set. We recall the definition of this notion.

\begin{Definition}[{\bf Porous set}]\label{def:porous}
A set $E \subset \mathbb{R}^n$ is said to be \emph{porous} with porosity constant
$0 < \delta \leq 1$ if there exists $R > 0$ such that, for each $x_0 \in E$ and
$0 < r < R$, there exists a point $y_0$ satisfying
\[
B_{\delta r}(y_0) \subset B_r(x_0) \setminus E.
\]
\end{Definition}

We observe that a porous set has Hausdorff dimension at most $n - c_{0}\,\delta^{n}$,
where $c_{0} = c_{0}(n) > 0$.

\begin{Corollary}[{\bf Porosity of $t-$level free boundaries}]\label{cor:porosity}
Suppose that $ F $ satisfies \hyperref[F1]{\bf (F1)}, \hyperref[F2]{\bf (F2)} and \hyperref[F3]{\bf (F3)}. Let $u$ be a viscosity solution to \eqref{DCP}. For every compact set
$K \Subset Q_{1}$, one has
\[
\mathscr{H}^{n-\varepsilon}
\bigl(\partial\{u > 0\} \cap K \cap \{t = t_{0}\}\bigr) < \infty,
\]
for some constant $0 < \varepsilon = \varepsilon\bigl(n,p, \mu, \lambda, \Lambda, \mathcal{M}_{2},
\|u\|_{L^{\infty}(Q_{1})}, \mathrm{dist}(K, \partial Q_{1})\bigr) \leq 1$.
\end{Corollary}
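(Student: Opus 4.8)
The plan is to deduce Corollary \ref{cor:porosity} from the sharp growth bound of Theorem \ref{Thm1} and the non-degeneracy of Theorem \ref{Thm2}, specialized to a fixed time slice $\{t = t_0\}$, following the strategy used for the elliptic dead-core problem and adapted in \cite{SO19}. First I would fix a compact set $K \Subset Q_1$ and set $d := \mathrm{dist}(K,\partial_{par}Q_1) > 0$; throughout, all constants will be allowed to depend on the parameters listed in the statement. The claim is that the time slice of the free boundary, namely $\mathcal{F}_{t_0} := \partial\{u>0\} \cap K \cap \{t=t_0\}$, viewed as a subset of $\mathbb{R}^n$ via the identification $x \mapsto (x,t_0)$, is a porous set in the sense of Definition \ref{def:porous}, with a porosity constant $\delta$ that is universal. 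Once porosity is established with constant $\delta$, the standard measure-theoretic fact quoted just before the statement gives $\mathscr{H}^{n-\varepsilon}(\mathcal{F}_{t_0}) < \infty$ for $\varepsilon = c_0(n)\,\delta^n \in (0,1]$, which is exactly the desired conclusion.

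To prove porosity, fix $x_0 \in \mathcal{F}_{t_0}$ and a radius $0 < r < R$, where $R$ is chosen small enough (depending on $d$) that the parabolic cylinders used below stay inside $Q_1$ and within the region where Theorems \ref{Thm1} and \ref{Thm2} apply. By non-degeneracy (Theorem \ref{Thm2}), applied at a point of $\overline{\{u>0\}}$ near $x_0$ — more precisely at $(x_0,t_0)$ itself, which lies in $\overline{\{u>0\}}$ — there exists a point in the parabolic boundary of a cylinder of size comparable to $r$ where $u$ is at least $\mathrm{C}_0^* (\kappa r)^{\frac{2+p}{1+p-\mu}}$ for a fixed fraction $\kappa$. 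Let $y_1$ be the spatial projection onto $\{t=t_0\}$ of a well-chosen such point; the key is that we may arrange, after possibly passing to an interior cylinder and using the time-monotonicity hypothesis $\partial_t u \geq -c_0 u^\mu \chi_{\{u>0\}}$ to transfer the lower bound to the slice $\{t=t_0\}$, that $u(y_1,t_0) \geq c\, r^{\frac{2+p}{1+p-\mu}}$ with $c$ universal and $|y_1 - x_0| \leq r/4$. Now let $\rho := \mathrm{dist}(y_1, \{x : u(x,t_0) = 0\})$; this is positive since $u(y_1,t_0)>0$, and by Theorem \ref{Thm1} applied at the nearest free boundary point we get $u(y_1,t_0) \leq \mathrm{C}_0 \|u\|_{L^\infty}\,\rho^{\frac{2+p}{1+p-\mu}}$. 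Combining the two inequalities and using that the exponent $\frac{2+p}{1+p-\mu}$ is positive yields $\rho \geq \delta_0 r$ for a universal $\delta_0 \in (0,1/4)$. Hence the spatial ball $B_{\delta_0 r}(y_1)$ is contained in $\{x : u(x,t_0)>0\}$, and in particular it misses $\mathcal{F}_{t_0}$; since also $|y_1 - x_0| \leq r/4$ we have $B_{\delta_0 r}(y_1) \subset B_r(x_0)$ (shrinking $\delta_0$ if needed), giving exactly $B_{\delta_0 r}(y_1) \subset B_r(x_0) \setminus \mathcal{F}_{t_0}$. This establishes porosity with $\delta := \delta_0$.

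The main obstacle I anticipate is the transfer of the non-degeneracy estimate, which is naturally stated on backward-in-time parabolic cylinders $Q_r^-(x_0,t_0)$ and on the parabolic boundary, down to the purely spatial slice $\{t = t_0\}$ so that it can be paired with the also-spatial quantity $\rho$. This is where the structural hypothesis $\partial_t u \geq -c_0(x,t)u^\mu\chi_{\{u>0\}}$ enters: integrating this differential inequality in time along the relevant interval controls the drop of $u$ between the time where the non-degeneracy sup is attained and the slice $t=t_0$, so that a definite fraction of the lower bound survives on the slice. One must check that the time interval involved has length comparable to $r^\theta$ with $\theta = \frac{(2+p)(1-\mu)}{1+p-\mu}$, which matches the parabolic scaling of $\mathrm{dist}_p$ and is precisely why the growth and non-degeneracy exponents are consistent; the elementary inequality $a^s - b^s \leq s\,a^{s-1}(a-b)$ type bounds, or a direct Grönwall/ODE comparison using the explicit solution $[-(1-\mu)\lambda_0(t_0-t)]_+^{1/(1-\mu)}$ from the Remark above, close this gap. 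Everything else — the scaling normalization reducing to $\|u\|_{L^\infty}\le 1$ and cylinders inside $Q_1$, the choice of $R$, and the final invocation of the Hausdorff-dimension bound for porous sets — is routine and I would only sketch it, citing \cite{SO19} for the analogous reduction.
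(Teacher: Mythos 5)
You have correctly identified the general strategy that the paper has in mind (the paper gives no proof of its own for this corollary, only the pointer to \cite{SO19}): show that the $t_{0}$-slice of the free boundary is a porous set in the sense of Definition \ref{def:porous}, with a universal porosity constant, and then invoke the Hausdorff-dimension bound for porous sets quoted just before the statement. You also correctly locate the one genuinely nontrivial step, namely transferring the parabolic non-degeneracy of Theorem \ref{Thm2}, which lives on the parabolic boundary of a backward cylinder $Q_r^-(x_0,t_0)$, down to the spatial slice $\{t=t_0\}$. The rest (using the upper growth bound of Theorem \ref{Thm1}, or equivalently Corollary \ref{Sec1:coro1}, to translate a lower bound on $u(y_1,t_0)$ into a lower bound on the spatial distance from $y_1$ to $\{u(\cdot,t_0)=0\}$, and the observation that any nearest zero of $u(\cdot,t_0)$ is automatically a point of $\partial\{u>0\}$) is sound.

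However, the Gr\"onwall/ODE comparison you propose for the transfer step does not, as sketched, actually close the gap, and I think you should be honest about this rather than call it routine. Writing $\phi(t):=u(x',t)$ for the spatial point $x'$ produced by Theorem \ref{Thm2}, the hypothesis $\partial_t u \ge -c_0 u^\mu$ gives $\frac{d}{dt}\phi^{1-\mu} \ge -(1-\mu)\|c_0\|_{\infty}$, hence
\[
\phi(t_0)^{1-\mu} \;\ge\; \bigl(\mathrm{C}_0^*\bigr)^{1-\mu} r^{\alpha(1-\mu)} - (1-\mu)\|c_0\|_{\infty}\,(t_0-t'),
\]
and since $\alpha(1-\mu)=\theta$ and $t_0-t'\le r^\theta$ this reads
\[
\phi(t_0)^{1-\mu} \;\ge\; \bigl[(\mathrm{C}_0^*)^{1-\mu} - (1-\mu)\|c_0\|_{\infty}\bigr]\, r^{\theta}.
\]
The bracket has no sign. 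The non-degeneracy constant $\mathrm{C}_0^*$ is determined by $n,\lambda,\Lambda,\mathcal{M}_1,p,\mu$, while $\|c_0\|_{\infty}$ is (per the remark after Definition \ref{Sec3:def1}) comparable to $\mathcal{M}_2$; if $\mathcal{M}_2/\mathcal{M}_1$ is large the bracket is negative and the estimate is vacuous. Moreover, your instinct to "pass to an interior cylinder" or to a smaller radius $\kappa r$ cannot rescue this, because the identity $\alpha(1-\mu)=\theta$ makes the two sides of the bracket scale \emph{identically} in $r$, so the constant comparison is unchanged at every scale; the same happens if you thin the cylinder in time by a factor $\varepsilon$ and the non-degeneracy point lands on the bottom cap, since then the lower bound there is only $\sim \varepsilon^{1/(1-\mu)} r^\alpha$ and the $\varepsilon$ again cancels. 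What you actually need is a mechanism that forces the non-degeneracy point onto the \emph{lateral} boundary $\partial B_r(x_0)\times(t_0-\varepsilon r^\theta, t_0]$ of a time-thin cylinder (in which case $u(x',t')\gtrsim r^\alpha$ with $t_0-t'\le\varepsilon r^\theta$ and the Gr\"onwall bracket becomes $(\mathrm{C}_1)^{1-\mu}-(1-\mu)\|c_0\|_\infty\varepsilon>0$ for $\varepsilon$ small), together with a separate treatment of the bottom-cap case; alternatively, one can replace the Gr\"onwall argument altogether by the slice Harnack-type inequality the paper later proves as Theorem \ref{Sec6:thm6.1} (under the two-sided hypothesis $0\le\partial_t u\le c_0 u^\mu$). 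As things stand, the proposal contains a real gap in the one step you yourself flagged as the main obstacle, so this part needs to be reworked rather than merely asserted as routine. A minor additional point: the corollary as stated does not list the monotonicity hypothesis on $\partial_t u$, but you are right that it must be implicitly assumed (as in Corollary \ref{Sec1:coro2}) since Theorem \ref{Thm1} requires it; it would be worth remarking on that explicitly.
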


In contrast to the classical heat equation---which is characterized by its
infinite speed of propagation---fully nonlinear degenerate parabolic dead-core problems
exhibit a finite speed of propagation. This feature enhances the physical
relevance of such equations in modelling diffusive processes. Moreover, the
emergence of this phenomenon stems from the loss of diffusivity of the equation
along the level set $\{u = 0\}$.

\begin{Corollary}[{\bf Finite speed propagation of $\{u > 0\}$}]
Suppose $ F $ satisfies \hyperref[F1]{\bf (F1)}. There exists a constant
$\mathrm{c} = \mathrm{c}(n, p, \mu, \lambda, \Lambda) > 1$
such that, for any solution to \eqref{DCP} with nonnegative and bounded time derivative, and any
$Q^{+}_{r}(x_{0}, t_{0}) \subset Q \times (0,\infty)$, the following implication holds:
\[
u(\cdot, t_{0}) = 0 \text{ in } B_{r}(x_{0})
\quad \Longrightarrow \quad
u(\cdot, t_{0} + s^{\theta}) = 0 \text{ in } B_{\max\{0,\, r - \mathrm{c} s\}}(x_{0}).
\]
\end{Corollary}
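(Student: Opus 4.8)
The plan is to combine the improved growth estimate of Theorem~\ref{Thm1} with the non-degeneracy of Theorem~\ref{Thm2} and a scaling argument, using a sliding barrier adapted to the parabolic $\mathrm{dist}_p$-metric. First I would fix $Q_r^{+}(x_0,t_0)\subset Q\times(0,\infty)$ with $u(\cdot,t_0)=0$ in $B_r(x_0)$, and argue by contradiction: suppose there is a point $(\bar{x},t_0+s^{\theta})$ with $u(\bar{x},t_0+s^{\theta})>0$ and $|\bar{x}-x_0|<r-\mathrm{c}s$ for a large constant $\mathrm{c}$ to be chosen. Then $(\bar{x},t_0+s^{\theta})\in\{u>0\}$, so there is a free boundary point $(z_0,\tau_0)\in\partial\{u>0\}$ realizing (up to a fixed factor) the $\mathrm{dist}_p$-distance from $(\bar{x},t_0+s^{\theta})$ to $\partial\{u>0\}$; call this distance $\rho$. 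Since $u$ vanishes on $B_r(x_0)\times\{t_0\}$ and $t_0<\tau_0\le t_0+s^{\theta}$, the parabolic distance $\rho$ cannot exceed $C\big(|\bar{x}-x_0|\,?\big)$—more precisely $\rho\lesssim s$, because the nearest zero is reached either by moving backward in time to the slab $\{t=t_0\}$ (cost $\le s^{\theta/\theta}=s$ in the $|t-\cdot|^{1/\theta}$ term) or spatially toward $\partial B_r(x_0)$ (cost $\le r-|\bar{x}-x_0|$, which we do not yet control, so the time direction is the one to use). Hence $\mathrm{dist}_p\big((\bar{x},t_0+s^{\theta}),\partial\{u>0\}\big)\le C_1 s$.

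Next I would invoke Corollary~\ref{Sec1:coro1} at the interior point $(\bar{x},t_0+s^{\theta})$: it gives
\[
u(\bar{x},t_0+s^{\theta})\;\ge\;\mathrm{C}_\ast\,\mathrm{dist}_p\big((\bar{x},t_0+s^{\theta}),\partial\{u>0\}\big)^{\frac{2+p}{1+p-\mu}}.
\]
This is the wrong direction for a contradiction by itself, so the real mechanism must be a barrier/comparison argument rather than a pure dimensional count. The cleaner route is: consider the explicit travelling-type subsolution built from the self-similar profile appearing in Remark~3, namely a function of the form $\Psi(x,t)=\mathrm{C}_{p,\mu,\lambda_0}\big(\mathrm{c} s-(|x-x_0|-?)\big)_{+}^{\frac{2+p}{1+p-\mu}}$ suitably combined with the time-barrier $\big[-(1-\mu)\lambda_0(t_0-t)\big]_+^{1/(1-\mu)}$-type term, and show that for $\mathrm{c}$ large (depending only on $n,p,\mu,\lambda,\Lambda$, using \hyperref[F1]{\bf (F1)} and $F(\mathrm{O}_n)=0$ to estimate $|D\Psi|^pF(D^2\Psi)-\Psi_t$ against $\lambda_0\Psi^\mu$) this $\Psi$ is a viscosity supersolution of \eqref{DCP} on the relevant cylinder, vanishes on a neighbourhood reaching the slab $\{t=t_0\}\cap B_r(x_0)$, and dominates $u$ on the parabolic boundary of the cylinder $B_{r}(x_0)\times(t_0,t_0+s^\theta)$—there one uses $u(\cdot,t_0)=0=\Psi(\cdot,t_0)$ on $B_{r-\mathrm{c}s}(x_0)$ and $u\le\|u\|_\infty\le\Psi$ on the lateral part by choosing the profile's amplitude large (this is where the hypothesis of nonnegative, bounded time derivative, $\partial_t u\ge -c_0 u^\mu\chi_{\{u>0\}}$, enters, exactly as in Theorem~\ref{Thm1}). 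Then the comparison principle Lemma~\ref{Se2:lem1} yields $u\le\Psi$ throughout, and since $\Psi\equiv0$ on $B_{\max\{0,r-\mathrm{c}s\}}(x_0)\times\{t_0+s^\theta\}$ we conclude $u(\cdot,t_0+s^\theta)=0$ there.

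The scaling reduction is the bookkeeping glue: by the natural parabolic rescaling $u_{r}(x,t)=r^{-\frac{2+p}{1+p-\mu}}u(x_0+rx,\,t_0+r^\theta t)$, which preserves the class of solutions to \eqref{DCP} (with the same $\mathcal{M}_1,\mathcal{M}_2$ up to the normalization of $\lambda_0$, since the powers in \eqref{DCP} were chosen precisely so that this scaling is invariant), one reduces to $r=1$ and $s\in(0,1/\mathrm{c})$, so it suffices to produce one universal barrier on a unit cylinder. I expect the main obstacle to be the construction and verification of the supersolution barrier $\Psi$: one must choose the argument of the power (a travelling front in the $|x-x_0|$ variable glued to the correct temporal decay) so that simultaneously (i) the degenerate fully nonlinear operator applied to it stays below $\lambda_0\Psi^\mu$ in the viscosity sense across the gluing interface where $D\Psi$ may vanish, and (ii) the support condition propagates at linear-in-$s$ speed after undoing the $t\mapsto t^{1/\theta}$ time change—the appearance of $s^\theta$ rather than $s$ on the time axis and $\mathrm{c}s$ (linear) on the space axis is dictated exactly by $\theta=\frac{(2+p)(1-\mu)}{1+p-\mu}$ being the anisotropy exponent, and checking the barrier respects this anisotropy is the delicate computation. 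Everything else—the contradiction setup, the use of Corollaries~\ref{Sec1:coro1} and \ref{Sec1:coro2} to locate free boundary points, and the invocation of Lemma~\ref{Se2:lem1}—is routine once $\Psi$ is in hand.
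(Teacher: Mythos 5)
The paper itself does not supply a proof of this corollary; it only remarks that the argument is ``inspired by the ideas in \cite[Corollary~4.4]{CW03} and \cite[Corollary~4.12]{SO19}.'' So your proposal has to be judged on its own merits, and while the overall shape (scaling $+$ travelling supersolution $+$ comparison via Lemma~\ref{Se2:lem1}) is the standard Choe--Weiss route and is sound, there are two concrete gaps in your execution, the first of which would make the proof as written fail.

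First, the domination on the lateral boundary cannot be achieved ``by choosing the profile's amplitude large.'' Write $\Psi(x,t)=A\bigl[\bigl(|x-x_{0}|-r+c(t-t_{0})^{1/\theta}\bigr)_{+}\bigr]^{\alpha}$ with $\alpha=\tfrac{2+p}{1+p-\mu}$. For $\Psi$ to be a viscosity supersolution of \eqref{DCP} one needs, after the computation (which you skipped, and in which the exponents conveniently match because $(\alpha-1)p+\alpha-2=\alpha\mu$), that the top-order term $\Lambda(A\alpha)^{1+p}\rho^{\alpha\mu}$ is absorbed by $\lambda_{0}\Psi^{\mu}\ge\mathcal{M}_{1}A^{\mu}\rho^{\alpha\mu}$; since $1+p>\mu$ this forces $A$ \emph{small}, $A\le A_{0}(n,p,\mu,\lambda,\Lambda,\mathcal{M}_{1})$. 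A large amplitude would destroy the supersolution property. Moreover, on $\partial B_{r}(x_{0})\times(t_{0},t_{0}+s^{\theta})$ one has $\Psi=Ac^{\alpha}(t-t_{0})^{\alpha/\theta}$, which tends to $0$ as $t\downarrow t_{0}$ regardless of the amplitude, so your claim ``$u\le\|u\|_{\infty}\le\Psi$ on the lateral part'' is simply false near $t=t_{0}$. What actually saves the argument is that $u$ is itself small there: since $u(\cdot,t_{0})=0$ in $\overline{B_{r}(x_{0})}$, every positivity point $(x,t)$ with $|x-x_{0}|=r$, $t>t_{0}$ has a free boundary point on $\{x\}\times[t_{0},t]$, hence $\mathrm{dist}_{p}\bigl((x,t),\partial\{u>0\}\bigr)\le(t-t_{0})^{1/\theta}$, and Theorem~\ref{Thm1} then gives $u(x,t)\le\mathrm{C}_{0}\|u\|_{\infty}(t-t_{0})^{\alpha/\theta}$. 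With $A$ fixed at $A_{0}$, taking $c\ge\bigl(\mathrm{C}_{0}\|u\|_{\infty}/A_{0}\bigr)^{1/\alpha}$ yields the domination. This is where Theorem~\ref{Thm1} enters -- to control $u$ on the lateral boundary -- not as the failed first contradiction attempt in your opening paragraph.

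Second, you misattribute the role of the time-derivative hypothesis. The corollary assumes $\partial_{t}u\ge 0$ (not the weaker $\partial_{t}u\ge -c_{0}u^{\mu}\chi_{\{u>0\}}$ used for Theorem~\ref{Thm1}). Its essential role is the following: combined with $u(\cdot,t_{0})=0$ in $B_{r}(x_{0})$, monotonicity forces $u(\cdot,t)=0$ in $B_{r}(x_{0})$ for \emph{all} $t\le t_{0}$, which is what guarantees that the nearest free boundary point to a positivity point $(x,t)$ lies on the segment $\{x\}\times[t_{0},t]$ and makes the distance estimate above valid. Your proposal conflates the two hypotheses and hence misses the place where monotonicity is actually used. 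A related caution: the time-derivative bound is \emph{not} invariant under the intrinsic scaling $u_{r}(x,\tau)=r^{-\alpha}u(x_{0}+rx,\,t_{0}+r^{\theta}\tau)$ (one picks up a factor $r^{-\alpha\mu}$), so the phrase ``preserves the class of solutions'' requires qualification; this is harmless because the argument only needs $\partial_{t}u\ge 0$, which is sign-preserved, but the boundedness clause is not. Once these two points are repaired -- small amplitude $A_{0}$ with domination via Theorem~\ref{Thm1}, and the correct use of $\partial_{t}u\ge0$ -- the comparison via Lemma~\ref{Se2:lem1} on the cylinder $B_{r}(x_{0})\times(t_{0},t_{0}+s^{\theta})$ does close the proof, since $\Psi\equiv0$ on $B_{r-c(t-t_{0})^{1/\theta}}(x_{0})$.
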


Now, a fundamental consequence of Theorem \ref{Thm1} is stated as follows:

\begin{Theorem}[{\bf Gradient decay}]\label{Thm3}
Suppose that $ F $ satisfies \hyperref[F1]{\bf (F1)}, \hyperref[F2]{\bf (F2)},\hyperref[F3]{\bf (F3)} and $ 0 < \mu \leq \frac{1}{1+p} $ also holds. Let $u$ be a continuous viscosity solution to \eqref{DCP}. For any $(z,s) \in \{u>0\} \cap Q_{1/2}$, there holds
\begin{equation*}
|Du(z,s)| \leq \mathrm{C}\,\mathrm{dist}_{p}\big((z,s),\partial\{u>0\}\big)^{\frac{1+\mu}{1+p-\mu}},
\end{equation*}
where $ \mathrm{C} >0 $ is a universal constant.
\end{Theorem}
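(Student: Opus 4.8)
The plan is to derive Theorem \ref{Thm3} as a quantitative consequence of the improved growth estimate in Theorem \ref{Thm1} combined with the interior $C^{1,1/(1+p)}$ spatial regularity recorded above (from \cite[Theorem 1.3]{LLYZ25}), via a rescaling argument at an interior positivity point. Fix $(z,s) \in \{u>0\} \cap Q_{1/2}$ and set $d := \mathrm{dist}_{p}\big((z,s),\partial\{u>0\}\big)$. The starting observation is that on the intrinsic parabolic cylinder $Q_{d}(z,s)$ — of spatial radius $d$ and time radius $d^{\theta}$ — the solution $u$ stays away from zero only in an appropriate averaged sense, but the free boundary is reached at parabolic distance exactly $d$. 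Pick a free boundary point $(x_0,t_0) \in \partial\{u>0\}$ realizing (or nearly realizing) this distance; Theorem \ref{Thm1} then gives, for all $(x,t)$ in a cylinder of size comparable to $d$ around $(z,s)$,
\[
u(x,t) \;\leq\; \mathrm{C}_0 \|u\|_{L^\infty(Q_T)} \, \mathrm{dist}_{p}\big((x,t),(x_0,t_0)\big)^{\frac{2+p}{1+p-\mu}} \;\lesssim\; d^{\frac{2+p}{1+p-\mu}}.
\]
Thus $u$ is controlled by $d^{\alpha}$ with $\alpha = \frac{2+p}{1+p-\mu}$ on a full cylinder $Q_{\kappa d}(z,s)$ for a small universal $\kappa$.

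Next I would rescale. Define $v(y,\tau) := d^{-\alpha}\, u(z + d\,y,\; s + d^{\theta}\,\tau)$ for $(y,\tau)$ in a unit (intrinsic) cylinder. A direct computation using $\alpha - \theta = \frac{2+p}{1+p-\mu} - \frac{(2+p)(1-\mu)}{1+p-\mu} = \frac{(2+p)\mu}{1+p-\mu}$ and $\alpha(1+p) - 2 = \frac{(2+p)(1+p)}{1+p-\mu} - 2 = \frac{p + p^2 + 2\mu + p\mu}{1+p-\mu}$ — which one checks equals $\theta$ — shows that $v$ solves an equation of the same structural type $|Dv|^p \widetilde F(D^2 v) - v_\tau = \widetilde\lambda_0 v^\mu \chi_{\{v>0\}}$ with the same ellipticity constants, and moreover $0 \le v \le \mathrm{C}_1$ on the unit cylinder for a universal $\mathrm{C}_1$ by the bound from the previous paragraph. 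Applying the interior spatial gradient estimate \cite[Theorem 1.3]{LLYZ25} to $v$ (its right-hand side is bounded since $0<\mu<1$ and $v$ is bounded) yields $|D v(0,0)| \le \mathrm{C}_2$ universally. Unwinding the scaling, $|Du(z,s)| = d^{\alpha - 1}|Dv(0,0)| \le \mathrm{C}_2\, d^{\alpha-1}$, and $\alpha - 1 = \frac{2+p}{1+p-\mu} - 1 = \frac{1+\mu}{1+p-\mu}$, which is exactly the claimed exponent.

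Two points require care. First, the rescaled equation must genuinely fall under the hypotheses of the cited gradient estimate — the degenerate coefficient $|Dv|^p$ is scale-invariant under this particular parabolic scaling precisely because $\theta$ was chosen so that $u_t$ and $|Du|^p F(D^2u)$ scale the same way, so one must verify the bookkeeping that the power of $d$ multiplying the absorption term is nonnegative (this is where $\mu>0$, hence $\alpha > 1 + \frac{1}{1+p}$, and the sign of $\alpha(1+p) - 2 - \theta = 0$, enter). Second — and this is the main obstacle — one must confirm that the hypothesis $0 < \mu \le \frac{1}{1+p}$ in the statement is exactly what guarantees $\alpha - 1 \le \frac{1}{1+p}$, i.e. that the exponent $\frac{1+\mu}{1+p-\mu}$ does not exceed the Hölder exponent of the gradient itself: indeed $\frac{1+\mu}{1+p-\mu} \le \frac{1}{1+p} \iff (1+\mu)(1+p) \le 1+p-\mu \iff \mu(2+p) \le 0$... so in fact the decay exponent is \emph{larger} than $\tfrac1{1+p}$ for $\mu>0$, which means the gradient decays \emph{faster} than the ambient $C^{1,1/(1+p)}$ scaling alone would predict. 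Reconciling this requires iterating the rescaling — a De Giorgi–type geometric iteration on dyadic cylinders $Q_{2^{-j}d}(z,s)$, at each step extracting from Theorem \ref{Thm1} the improved bound $\sup u \lesssim (2^{-j}d)^\alpha$ and from the gradient estimate on the rescaled function the improvement of the gradient oscillation — and summing the geometric series; the restriction $\mu \le \frac{1}{1+p}$ ensures the iteration's ratio is summable and that one does not overshoot the regularity allowed by the equation at interior points. I would carry out: (i) the single-scale rescaling and verification of structural invariance; (ii) the extraction of the universal sup-bound on each dyadic scale from Theorem \ref{Thm1}; (iii) the iteration, tracking constants; (iv) passing to the limit to read off $|Du(z,s)|$ and identifying the exponent.
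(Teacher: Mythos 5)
Your core plan --- fix $(z,s)$, set $d := \mathrm{dist}_p((z,s),\partial\{u>0\})$, use Theorem~\ref{Thm1} to bound $u$ by $d^{\alpha}$ on a cylinder of size $d$, rescale by $\omega(y,\tau) := d^{-\alpha}\,u(z+dy,\,s+d^{\theta}\tau)$, and apply the interior gradient estimate to $\omega$ --- is exactly the paper's argument, and it closes in a single step. But you then talk yourself out of it by inventing an obstruction that is not there, and in the process you miss the place where the hypothesis $\mu \le \frac{1}{1+p}$ actually enters.

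First, a computational slip: you assert $\alpha(1+p)-2=\theta$. This is false; the identity enforced by the intrinsic scaling is $\theta = (p+2) - p\alpha$, which you can check is equivalent to $\theta = \frac{(2+p)(1-\mu)}{1+p-\mu}$ when $\alpha=\frac{2+p}{1+p-\mu}$. With the correct relation the time-rescaling factor $d^{\theta}$ does make the equation scale-invariant and bounds the rescaled absorption term, so (i) is fine, just with the right bookkeeping.

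Second, and this is the real issue: your ``main obstacle'' --- whether $\alpha-1 \le \frac{1}{1+p}$ --- is a red herring. The interior estimate of Proposition~\ref{Se2:Pro1} applied to $\omega$ gives $|D\omega(0,0)| \le C$ with $C$ depending only on $\|\omega\|_{L^{\infty}}$ and universal data, full stop. Unwinding gives $|Du(z,s)| \le C\,d^{\alpha-1}$. There is no competition between $\alpha-1$ and the Hölder exponent $1/(1+p)$ of the gradient; one is a decay rate of $|Du|$ toward the free boundary, the other is a modulus of continuity of $Du$ at an interior point. Nothing needs to be reconciled, so the dyadic De Giorgi iteration you propose in (ii)--(iv) is not needed and addresses a non-problem.

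Where $\mu \le \frac{1}{1+p}$ genuinely enters --- and your proposal is silent about this --- is the cylinder containment. To quote Theorem~\ref{Thm1} you need a free boundary point $(x_0,t_0)$ with $\mathrm{dist}_p((z,s),(x_0,t_0))=d$ and then need $Q_d(z,s) \subset Q_{2d}(x_0,t_0)$ so that the bound $u \lesssim d^{\alpha}$ holds on all of $Q_d(z,s)$. Since $|s-t_0| \le d^{\theta}$ and $|t-s|< d^{\theta}$, the time-interval fits precisely when $2d^{\theta} \le (2d)^{\theta}$, i.e.\ $\theta \ge 1$, which is equivalent to $\mu \le \frac{1}{1+p}$. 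That containment is the step your plan needs to state and justify; once it is in place the single-scale rescaling finishes the proof as you originally intended.
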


\vspace{1mm}

In the final result, we shall be devoted to establishing a general Liouville-type theorem for any entire viscosity solutions to \eqref{DCP}.

\begin{Theorem}[{\bf Liouville-type result}]\label{Thm4}
Suppose that $ F $ satisfies \hyperref[F1]{\bf (F1)}, \hyperref[F2]{\bf (F2)} and \hyperref[F3]{\bf (F3)}. Let $ u $ be an entire viscosity solution to
\begin{equation*}
  |Du|^{p} F(D^{2}u) - \partial_{t} u = \lambda_{0}(x,t) u^{\mu}\chi_{\{u>0\}}(x,t)
\end{equation*}
with $ u(0,0) = 0 $ and $ \lambda_{0} $ as before. If $ u(x,t) = \text{o} \big(\max\{|x|, |t|^{\frac{1}{\theta}}\}^{\frac{2+p}{1+p-\mu}}\big)  $ as $ \max\{|x|, |t|^{\frac{1}{\theta}}\} \rightarrow \infty $, then
\begin{equation*}
u \equiv 0  \ \ \text{in}  \ \  \mathbb{R}^{n} \times \mathbb{R}.
\end{equation*}

\end{Theorem}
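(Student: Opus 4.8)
The plan is to argue by contradiction via a rescaling/blow-down argument anchored at the origin, where the growth hypothesis and the improved regularity of Theorem \ref{Thm1} are forced to collide. Suppose $u \not\equiv 0$. Since $u(0,0)=0$ and $u \geq 0$, the point $(0,0)$ lies on $\partial\{u>0\}$ (or $u$ vanishes in a neighbourhood of the origin but is positive somewhere; in that case one simply re-centers at a free boundary point, which exists because $\{u>0\}$ is a proper nonempty subset). So without loss of generality $(0,0) \in \partial\{u>0\}$. For each $R>0$ define the parabolically rescaled function
\[
u_{R}(x,t) := \frac{u\big(Rx,\,R^{\theta}t\big)}{R^{\frac{2+p}{1+p-\mu}}},
\]
with $\theta = \frac{(2+p)(1-\mu)}{1+p-\mu}$ as in the statement. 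A direct computation using $|D(u_R)|^p = R^{p\cdot\frac{1+p-\mu}{1+p-\mu}-\ldots}$ — more precisely, one checks that the scaling exponents match because $p\cdot\frac{2+p}{1+p-\mu} - p = \frac{2+p}{1+p-\mu}\cdot 2 - \theta$ and $\mu\cdot\frac{2+p}{1+p-\mu} = \frac{2+p}{1+p-\mu} - \theta$ — shows that $u_R$ is again a nonnegative viscosity solution of an equation of the same type \eqref{DCP}, with the operator $F_R(M) := F(R^{\sigma}M)/R^{\sigma}$ (for the appropriate $\sigma$) still satisfying \hyperref[F1]{\bf (F1)}--\hyperref[F3]{\bf (F3)} with the same ellipticity constants, and with weight $\lambda_0(Rx,R^\theta t) \in [\mathcal{M}_1,\mathcal{M}_2]$. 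Crucially, $(0,0) \in \partial\{u_R>0\}$ for every $R$, and the one-sided time-monotonicity $\partial_t u \geq -c_0 u^\mu \chi_{\{u>0\}}$ is preserved under the rescaling (the constant $c_0$ transforms acceptably since the growth hypothesis will control it, or one notes entire solutions of this form automatically inherit such a bound on bounded sets).

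Next I would use the growth hypothesis $u(x,t) = \mathrm{o}\big(\max\{|x|,|t|^{1/\theta}\}^{\frac{2+p}{1+p-\mu}}\big)$ to show that $u_R \to 0$ locally uniformly as $R \to \infty$: indeed for $(x,t)$ in a fixed compact set $\{\mathrm{dist}_p((x,t),(0,0)) \leq L\}$,
\[
u_R(x,t) = \frac{u(Rx,R^\theta t)}{R^{\frac{2+p}{1+p-\mu}}}
= \frac{u(Rx,R^\theta t)}{\max\{|Rx|,|R^\theta t|^{1/\theta}\}^{\frac{2+p}{1+p-\mu}}} \cdot \Big(\tfrac{\max\{|x|,|t|^{1/\theta}\}}{1}\Big)^{\frac{2+p}{1+p-\mu}} \longrightarrow 0,
\]
since the first factor tends to $0$ by hypothesis (its argument has parabolic norm $\sim R\cdot\text{const} \to \infty$) and the second is bounded by $L^{\frac{2+p}{1+p-\mu}}$. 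On the other hand, Theorem \ref{Thm2} (non-degeneracy) applied to the solution $u_R$ at the free boundary point $(0,0)$ gives, for any fixed $r>0$ with $Q_r(0,0)$ in the domain,
\[
\sup_{\partial_{par}Q_r^{-}(0,0)} u_R \geq \mathrm{C}_0^{*}\, r^{\frac{2+p}{1+p-\mu}},
\]
with $\mathrm{C}_0^{*}$ depending only on $n,\lambda,\Lambda,\mathcal{M}_1,\mathcal{M}_2,p,\mu$ — hence \emph{independent of $R$}. Taking $R \to \infty$ forces $0 \geq \mathrm{C}_0^{*} r^{\frac{2+p}{1+p-\mu}} > 0$, a contradiction. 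Therefore $u \equiv 0$.

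The main obstacle I anticipate is the bookkeeping of the scaling in the first step: verifying that $u_R$ solves a genuinely admissible equation of the form \eqref{DCP} with constants uniform in $R$, and in particular that the rescaled operators $F_R$ remain in the class governed by \hyperref[F1]{\bf (F1)}--\hyperref[F3]{\bf (F3)} (the $C^{1,\kappa}$ bound in \hyperref[F3]{\bf (F3)} is the delicate one, since $F_R$'s higher regularity seminorm may degenerate — one should use instead that only uniform ellipticity and the structural form are needed for Theorem \ref{Thm2}, which requires just \hyperref[F1]{\bf (F1)}, so the non-degeneracy constant is safely uniform), and that the hypothesis $\partial_t u_R \geq -c_0 u_R^\mu\chi_{\{u_R>0\}}$ needed nowhere for Theorem \ref{Thm2} but is automatic here. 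A secondary subtlety is handling the degenerate prefactor $|Du|^p$: at a free boundary point $Du$ typically vanishes, so one must confirm the rescaled equation is interpreted in the viscosity sense consistently; this is routine once the exponent arithmetic $p\alpha - p + 2 = \alpha\theta^{-1}\cdot\text{(the right power)}$, i.e. the identity $\theta = 2 - \mu\alpha$ with $\alpha = \frac{2+p}{1+p-\mu}$ — equivalently $(2+p) - \mu(2+p) = (1+p-\mu)\theta/(1-\mu)\cdot(1-\mu)$ — is checked. Once the rescaled solutions are legitimately in the hypotheses of Theorem \ref{Thm2}, the contradiction is immediate and requires no compactness argument at all, which is why I would avoid invoking the compactness machinery mentioned in the abstract and keep the proof to the two ingredients above.
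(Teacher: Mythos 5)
Your argument is correct and reaches the conclusion by a genuinely different (and arguably tighter) route than the paper. The paper also rescales $v_r(x,t) := u(rx,r^{\theta}t)/r^{\frac{2+p}{1+p-\mu}}$ and uses the growth hypothesis to get $\|v_r\|_{L^{\infty}(Q_1)} = o(1)$, but then invokes Theorem~\ref{Thm1} (the \emph{upper} estimate at free boundary points) to convert this into the pointwise bound $v_r(x,t) \leq \mathrm{C}\,o(1)\max\{|x|,|t|^{1/\theta}\}^{\frac{2+p}{1+p-\mu}}$ in $Q_{1/2}$, and finally derives a contradiction by unscaling back to a presumed point where $u>0$. You instead invoke Theorem~\ref{Thm2} (the \emph{lower}, non-degeneracy estimate), whose constant $\mathrm{C}_0^{*}$ is uniform in $R$ because it depends only on the ellipticity constants, $\mathcal{M}_1,\mathcal{M}_2,p,\mu$ --- all preserved by the scaling. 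Since $\sup_{\partial_{par}Q_r^{-}(0,0)}u_R \geq \mathrm{C}_0^{*}r^{\frac{2+p}{1+p-\mu}}$ while the growth hypothesis forces $u_R \to 0$ uniformly on $\partial_{par}Q_r^{-}(0,0)$ (where $\max\{|x|,|t|^{1/\theta}\} \geq r$ bounds the argument's norm below by $Rr\to\infty$), the contradiction is immediate. Your route buys three things. First, Theorem~\ref{Thm2} needs only \hyperref[F1]{\bf (F1)}, whereas Theorem~\ref{Thm1} needs \hyperref[F1]{\bf (F1)}--\hyperref[F3]{\bf (F3)}, so you avoid any worry about the $C^{1,\kappa}$-seminorm of $F_R$ degenerating as $R\to\infty$ (you correctly flag this). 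Second, Theorem~\ref{Thm1} carries the one-sided monotonicity hypothesis $\partial_t u \geq -c_0 u^{\mu}\chi_{\{u>0\}}$, which is \emph{not} assumed in the statement of Theorem~\ref{Thm4}; the paper's proof quietly uses Theorem~\ref{Thm1} without noting this, so your route actually closes a small gap. Third, you skip the paper's two-case analysis (bounded vs.\ unbounded maximizing sequence) used to prove $\|v_r\|_{L^\infty(Q_1)} = o(1)$, since you only need the pointwise consequence of the growth hypothesis on $\partial_{par}Q_r^-$. Your handling of the degenerate case where $(0,0)$ lies interior to the dead core (re-center at a genuine free boundary point, which exists since $\{u>0\}$ is a nonempty proper open subset) is sound, and is in fact a point the paper's proof also implicitly relies on without comment.
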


\subsection{Ideas of the proof}
From a mathematical standpoint, this article is, to the best of our knowledge, the first to establish sharp and improved $C^{\alpha}$ regularity estimates across the free boundary $\partial\{u>0\}$, together with weak geometric properties of viscosity dead-core solutions $u$. The proof of Theorem~\ref{Thm1} is inspired by techniques from the regularity theory of fully nonlinear equations and free boundary problems (see \cite{CC95, 1W92, 2W92, 3W92, Sh03, SLR21, SOS18, SO19, ST17}).

We begin by deriving a decay estimate for normalized viscosity solutions in a $\tfrac{1}{2}$-adic cylinder. The argument proceeds by contradiction, constructing an auxiliary function $v_{k}$ that satisfies a degenerate parabolic equation in the unit cylinder. By invoking the H\"{o}lder regularity of the gradient for $v_{k}$ (Proposition~\ref{Se2:Pro1}), we obtain a limiting function $v_{0}$ solving a homogeneous degenerate equation. Through a compactness argument combined with the strong minimum principle for the limit problem, we arrive at a contradiction, thus establishing the key lemma (Lemma~\ref{Se3:lem1}). Once this is achieved, the $C^{\frac{2+p}{1+p-\mu}}$ regularity of solutions to \eqref{DCP} follows from the comparison principle and techniques developed by da Silva \textit{et al.} \cite{SO19, SOS18}.

Several technical obstacles arise in implementing this strategy. In particular:

\begin{enumerate}[i)]
\item The first difficulty concerns the inhomogeneity of the operator, which requires the use of \emph{intrinsic scaling}. Specifically, we consider
\[
  v_{k}(x,t) = \frac{u_{k}(\tfrac{1}{2^{j_{k}}}x, \alpha_{k}t)}{L_{1/2^{j_{k}+1}}[u_{k}]}
  \quad \text{in} \quad Q_{1},
\]
where the notation $\alpha_{k}$ and $L_{1/2^{j_{k}+1}}[u_{k}]$ is detailed in Section~\ref{Section 3}.

\vspace{2mm}

\item The second challenge lies in establishing the compactness of the family of viscosity solutions $\{v_{k}\}_{k \in \mathbb{N}}$. Since a suitable reference seems unavailable, we adopt the approach introduced in \cite{A20}. First, we derive $C_{x}^{0,\beta}$ estimates for all $\beta \in (0,1)$ using the Ishii-Lions method, and then refine the auxiliary function to obtain $C_{x}^{0,1}$ estimates. Combining the comparison principle with the $C_{x}^{0,1}$ bound yields a $C_{t}^{\frac{1}{2+p}}$ estimate. Unlike \cite[Lemma~3.2]{A20}, we improve the competitor function by replacing the spatial term $|x|^{2}$ with $|x|^{m}$ ($2 \leq m < 2 + 1/p$), which provides an alternative viewpoint for establishing the $C_{t}^{\frac{1}{2+p}}$ regularity.

\vspace{2mm}

\item A further difficulty is the absence of a strong minimum principle for homogeneous degenerate parabolic equations of the form
\[
  |Dv_{0}|^{p} \widetilde{F}_{0}(D^{2}v_{0}) - \partial_{t} v_{0} = 0.
\]
To address this, we employ the H\"{o}lder continuity of $v_{k}$ in the time variable to reduce the analysis to the time-independent case ($\partial_{t}v_{0} \equiv 0$). The final step follows from the cutting lemma \cite[Lemma~6]{CL13} and the classical strong minimum principle \cite[Theorem~2.1]{DaLio04}; see also \cite[Theorem~3.1]{GO25}.

\vspace{2mm}

\item Finally, while da Silva \textit{et al.} applied a comparison principle to fully nonlinear dead-core problems in \cite{SO19}, the degenerate case considered here is significantly more delicate. The main challenge lies in constructing an appropriate auxiliary function, requiring refined separation-of-variables techniques to balance spatial and temporal powers effectively.
\end{enumerate}

To establish the non-degeneracy of dead-core solutions (Theorem~\ref{Thm2}), we also construct a carefully tailored auxiliary function and apply the new comparison principle (Lemma~\ref {Se2:lem1}); see Section~\ref{Section 3} for details.

\subsection{Novelties of this article}
We summarize below the main novelties of this work:

\begin{itemize}

\vspace{1mm}

\item We address the general range $0 \leq p < \infty$. This can be regarded as an extension of \cite{SO19} and as the parabolic counterpart of \cite{SLR21} and \cite{T16}.

\vspace{1mm}

\item We provide the first complete proof of compactness for viscosity solutions to degenerate fully nonlinear parabolic equations, refining the competitor function in \cite[Lemma 3.2]{A20} to establish a $C_{t}^{\frac{1}{2+p}}$ estimate (see Lemma \ref{App:lem3} for details). This alternative approach is expected to be of independent interest.

\vspace{1mm}

\item We introduce several new and delicate comparison functions in the proofs of Theorems \ref{Thm1} and \ref{Thm2}. We emphasize that, in the degenerate setting considered here, constructing such functions demands substantially more effort than in the non-degenerate case \cite{SO19}.

\vspace{1mm}

\item A new $L^{\delta}$-average estimate for solutions to fully nonlinear singular elliptic equations is obtained using an iterative scheme combined with recent $W^{2,\delta}$ regularity results \cite{BBO24}. This unveils an insightful application of iterative techniques; see Corollary \ref{Sec4:Coro7}.

\item We further derive a sharper estimate near free boundary points by means of a Harnack-type inequality
(cf.~\cite[Theorem~8.3]{BJrDaSR2023}) for viscosity solutions to~\eqref{DCP}
on $t$-slices of the parabolic domain; see Theorem~\ref{Sec6:thm6.1}.

\end{itemize}

\subsection{Structure of the paper}
The remainder of this paper is organized as follows. Section \ref{Section 2} presents the definition of viscosity solutions to \eqref{DCP} and several auxiliary lemmas. We also state the existence of a dead-core solution to \eqref{DCP}. In Section \ref{Section 3}, we prove Theorems~\ref{Thm1} and~\ref{Thm2}. Section \ref{Section 4} contains the proofs of Corollary~\ref{Sec1:coro1} and  Theorems \ref{Thm3}. Section \ref{Section 5} is devoted to presenting a blow-up analysis and the proof of a Liouville-type result, see Theorem~\ref{Thm4}.

In Section \ref{Section 6}, we derive an $L^{\delta}$-average estimate for fully nonlinear singular elliptic equations, and
a new formulation of the gradient estimate, building upon the intrinsic ideas from Theorems~\ref{Thm1} and \ref{Thm2}. Moreover, a Harnack-type inequality for viscosity solutions to \eqref{DCP} in $t-$slices of the parabolic domain is addressed in the spirit of clever observation. These results may be of independent interest (see Corollaries~\ref{Sec4:Coro7}--\ref{Sec6:Coro2} and Theorem~\ref{Sec6:thm6.1}). We also outline several promising directions for future research. Finally, Appendix \ref{Appendix:A} and Appendix \ref{Appendix:B} contain the proofs of Proposition \ref{Se2:Prop2} and Lemma \ref{Se2:lem1}, respectively.

\subsection{Notations}
We summarize below the basic notation used throughout the paper.

\begin{itemize}

\item For $(x_{0}, t_{0}) \in \Omega \times \mathbb{R}$ and $r > 0$, let $B_{r}(x_{0})$ denote the open ball centered at $x_{0}$ with radius $r$. We define the following parabolic cylinders:
\begin{align*}
       & Q_{r}(x_{0}, t_{0}) := B_{r}(x_{0}) \times (t_{0} - r^{\theta},\, t_{0} + r^{\theta}), \\
       & Q_{r}^{+}(x_{0}, t_{0}) := B_{r}(x_{0}) \times [t_{0},\, t_{0} + r^{\theta}),  \\
       & Q_{r}^{-}(x_{0}, t_{0}) := B_{r}(x_{0}) \times (t_{0} - r^{\theta},\, t_{0}],
\end{align*}
where $\theta$ denotes the  intrinsic time-scaling exponent
\begin{equation*}
  \theta = \theta(p,\mu) := \frac{(2+p)(1-\mu)}{1+p-\mu}.
\end{equation*}

\item We write $u_{t} = \partial_{t}u = \frac{\partial u}{\partial t}$, and define the parabolic distance by
\[
\mathrm{dist}_{p}((x,t), (y,s)) := |x - y| + |t - s|^{\frac{1}{\theta}}.
\]

\item For a parabolic domain $Q := \Omega \times I'$, its parabolic boundary is given by
\[
\partial_{\mathrm{par}} Q := (\overline{\Omega} \times \{a\}) \cup (\partial \Omega \times I'),
\]
where $I'$ is an interval $[a, b)$.

\item $\mathscr{L}^{n+1}(E)$ denotes the $(n + 1)$-dimensional Lebesgue measure of a measurable set $E$.

\item $\mathscr{H}^{n}(E)$ denotes the $n$-dimensional Hausdorff measure of a measurable set $E$.

\item $ \textbf{I}{\rm{d}_{n} } $ denotes the $n\times n$ identity matrix.

\item $ f \simeq g $ denotes there exists a constant $ \mathrm{C} > 0 $ such that $ \frac{1}{\mathrm{C}}g \leq f \leq \mathrm{C}g $.

\item $\mathrm{C}$ denotes a generic positive constant that may vary from line to line.
\end{itemize}

{\bf Acknowledgments}. F. Jiang has been supported by the National Natural Science Foundation of China (No. 12271093) and the Jiangsu Provincial Scientific Research Center of Applied Mathematics (Grant No. BK20233002), and the Shanghai Institute for Mathematics and Interdisciplinary Sciences (SIMIS) under grant number SIMIS-ID-2025-AD.
J.V. da Silva has received partial support from CNPq-Brazil under Grant No. 307131/2022-0, and Chamada CNPq/MCTI No. 10/2023 - Faixa B - Consolidated Research Groups under Grant No. 420014/2023-3. J.V. da Silva has been supported by FAEPEX-UNICAMP (Project No. 2441/23, Special Calls - PIND - Individual Projects, 03/2023), and FAPESP-Brazil under the Grant No.  2025/09344-1 - Special Programs - Special Projects - First Projects - Call for Proposals (2025) - 1st Cycle.

\vspace{2mm}

\section{Preliminaries}\label{Section 2}
In this section, we first review the definition of viscosity solution to \eqref{DCP} and several useful lemmas involving the comparison principle, the H\"{o}lder continuity of the gradient for solution from \cite[Theorem 1.1]{LLY24}, and the compactness of solution. Subsequently, we shall state the existence of viscosity solutions to a Dirichlet problem associated with \eqref{DCP}.

\subsection{Viscosity solutions and auxiliary lemmas}

In the following definition, we introduce the class of solutions considered in this paper.

\begin{Definition}[{\bf Viscosity solutions}]
\label{Pre:def1}
We say that $ u \in C^{0}(Q_{T})$ is a viscosity sub-solution (resp. super-solution) to
\begin{equation*}
  |Du|^{p} F(D^{2}u) - u_{t} = f(x,t,u) \quad \text{in} \quad  Q_{T}
\end{equation*}
if for any $ (x_{0}, t_{0}) \in Q_{T} $ and for every $ \varphi \in C^{2,1}(Q_{T}) $ touching $ u $ from above (resp. below) at $ (x_{0}, t_{0}) $ with $ D \varphi(x_{0}, t_{0}) \neq 0 $, then
 \begin{equation*}
   |D \varphi(x_{0}, t_{0})|^{p} F(D^{2}\varphi(x_{0}, t_{0})) - \varphi_{t}(x_{0}, t_{0})   \geq (resp. \leq) f(x_{0}, t_{0},u(x_{0}, t_{0})).
 \end{equation*}
\end{Definition}

Now we shall present a comparison principle for a more general class of fully nonlinear degenerate parabolic equations of the form
\begin{equation}\label{G-DCP}
\widetilde{\Phi}(|Du|) F(D^{2}u) - \partial_{t} u - \lambda_{0}(x,t)f(u) = g(x,t)   \quad  \text{in} \quad  Q_{T}   \tag{{\bf G-DCP}}
\end{equation}
where $ f $ is continuous non-decreasing and $ g $ is  continuous. The degeneracy is governed by the general law
\begin{equation}\label{Se2:eq0}
  \widetilde{\Phi}(|Du|) \simeq |Du|^{p}  + a(x,t) |Du|^{q},
\end{equation}
with exponents $ 0 \leq  p \leq q < \infty $ and a non-negative coefficient $ a(x,t) \in C^{0}(\overline{Q_{T}}) $.

\begin{Lemma}[{\bf Comparison principle}]
\label{Se2:lem1}
Assume that assumptions \hyperref[F1]{\bf (F1)} and \eqref{Se2:eq0} are satisfied. Suppose that $ u $ and $ v $ are, respectively, a viscosity sub-solution with source term $ g $, and a viscosity super-solution with source term $ \widetilde{g} $ of \eqref{G-DCP}. Moreover, assume that $ \widetilde{g} \leq g $ in $ Q_{T} $. If $ u(x,t) \leq v(x,t) $ for any $ (x,t) \in \partial_{\mathrm{par}} Q_{T} $, then $ u \leq v $ in $ Q_{T} $.
\end{Lemma}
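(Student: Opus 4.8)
The plan is to prove the comparison principle via the classical viscosity-solution technique of doubling the variables, adapted to the degenerate parabolic operator $\widetilde{\Phi}(|Du|)F(D^2u)-\partial_t u$. First I would argue by contradiction: suppose $\sup_{Q_T}(u-v)=:2\sigma>0$. Because $u\le v$ on the parabolic boundary $\partial_{par}Q_T$ and since the maximum of $u-v$ over $\overline{Q_T}$ is attained (using continuity, and if $T=\infty$ a preliminary reduction to a finite slab plus a standard penalization in $t$ of the form $-\tfrac{\eta}{T-t}$ to push the maximum into the interior), the positive supremum must be attained at some interior point. To localize, I would introduce the penalized functional
\[
\Psi_{j,\varepsilon}(x,y,t) := u(x,t) - v(y,t) - \frac{j}{2}|x-y|^{2} - \frac{\varepsilon}{2}\bigl(|x-x^{*}|^{2}+|y-x^{*}|^{2}\bigr) - \frac{\eta}{T-t},
\]
where $(x^{*},t^{*})$ is a near-maximum point of $u-v$, and let $(x_j,y_j,t_j)$ be a maximizer of $\Psi_{j,\varepsilon}$ over $\overline{Q_T}\times\overline{Q_T}\times[0,T)$. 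Standard arguments give $j|x_j-y_j|^2\to 0$, $x_j,y_j\to$ a common interior point as $j\to\infty$ (for fixed small $\varepsilon,\eta$), and $t_j$ bounded away from $0$ and $T$.

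The next step is to apply the parabolic theorem on sums (the Crandall–Ishii–Jensen lemma, e.g.\ Crandall–Ishii–Lions \cite{CC95}-style), which produces numbers $a,b\in\mathbb{R}$ with $a-b=0$ (from the common time variable) and symmetric matrices $X_j,Y_j$ with $X_j\le Y_j$, such that the subsolution inequality holds at $(x_j,t_j)$ with gradient $\xi_j := j(x_j-y_j)+\varepsilon(x_j-x^*)$ and Hessian $X_j$, and the supersolution inequality holds at $(y_j,t_j)$ with gradient $\zeta_j := j(x_j-y_j)-\varepsilon(y_j-x^*)$ and Hessian $Y_j$. One then wants to subtract these to reach a contradiction: the time-derivative terms cancel (both equal $a=b$ modulo the $\eta/(T-t)^2$ penalty, which has the favorable sign), and one must show
\[
\widetilde{\Phi}(|\xi_j|)F(X_j)-\widetilde{\Phi}(|\zeta_j|)F(Y_j) \;\ge\; \lambda_0(x_j,t_j)f(u(x_j,t_j))-\lambda_0(y_j,t_j)f(v(y_j,t_j)) - (g-\widetilde{g}) + o(1),
\]
is violated. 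Since $u(x_j,t_j)-v(y_j,t_j)\to 2\sigma>0$ and $f$ is non-decreasing (here I would also use that the coefficient $\lambda_0$ is bounded below by $\mathcal M_1>0$, or treat the $\lambda_0 f(u)$ term together with $g$ after noting $f$ monotone), the right-hand side is essentially nonnegative; so the game is to show the left-hand side is $\le o(1)$, which forces the contradiction $0\ge 2\sigma\mathcal M_1>0$ (modulo a routine cleanup of the $f$ and $\lambda_0$ terms).

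The main obstacle — and the genuinely degenerate-specific part — is controlling $\widetilde{\Phi}(|\xi_j|)F(X_j)-\widetilde{\Phi}(|\zeta_j|)F(Y_j)$ when the common gradient degenerates, i.e.\ when $\xi_j,\zeta_j\to 0$. There are two regimes. If $|\xi_j|$ (equivalently $|\zeta_j|$, since $|\xi_j-\zeta_j|\le\varepsilon(|x_j-x^*|+|y_j-x^*|)$ is small) stays bounded away from $0$ along a subsequence, then $\widetilde{\Phi}(|\xi_j|)$ and $\widetilde{\Phi}(|\zeta_j|)$ are comparable and bounded above and below, and uniform ellipticity (F1) together with $X_j\le Y_j$ — more precisely the matrix inequality $\begin{pmatrix}X_j&0\\0&-Y_j\end{pmatrix}\le 3j\begin{pmatrix}I&-I\\-I&I\end{pmatrix}$ — gives $\mathscr P^-_{\lambda,\Lambda}(X_j-Y_j)\le F(X_j)-F(Y_j)$ with $X_j-Y_j\le0$, so $F(X_j)\le F(Y_j)$ up to the small error coming from the discrepancy $\widetilde\Phi(|\xi_j|)-\widetilde\Phi(|\zeta_j|)=O(\varepsilon)$ times $|F(Y_j)|=O(j)$; one must be careful to send $j\to\infty$ before $\varepsilon\to0$ and keep this error controlled (this is why the $\varepsilon$-term is quadratic and centered at $x^*$, keeping $\varepsilon j|x_j-y_j|$-type cross terms negligible). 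If instead $|\xi_j|\to0$, then the test function $\varphi$ has vanishing gradient at the contact point and the definition of viscosity solution for the degenerate equation (Definition~\ref{Pre:def1}, which only tests against $\varphi$ with $D\varphi\ne0$) means the inequalities may not be directly available; the standard fix, which I would invoke, is that at a point where the doubling test function has zero gradient one can use the relaxed (semicontinuous-envelope) notion — or, following the by-now-routine observation for $|Du|^pF(D^2u)$-type operators, one checks that the degenerate equation admits the equivalent formulation where $\varphi$ with $D\varphi(x_0,t_0)=0$ and $D^2\varphi(x_0,t_0)=0$ imposes no condition, and that case is handled by the vanishing of $\widetilde\Phi$ together with the cancellation of the remaining second-order and time terms. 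Assembling these cases and then passing to the limit $j\to\infty$, $\varepsilon\to0$, $\eta\to0$ in that order yields the desired contradiction and hence $u\le v$ in $Q_T$.
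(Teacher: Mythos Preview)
Your overall strategy (contradiction, doubling of variables, parabolic Ishii--Lions, then subtract the viscosity inequalities) matches the paper's, but there is a genuine gap in how you handle the degeneracy of $\widetilde{\Phi}$ at the origin, and the quadratic penalty $\tfrac{j}{2}|x-y|^{2}$ is the culprit. When you try to dispose of the case $|\xi_j|\to 0$, neither of your suggested fixes applies: the ``equivalent formulation'' you invoke (no condition when $D\varphi=0$ and $D^{2}\varphi=0$) is useless here because the test function coming from the quadratic penalty has Hessian $jI_n+\varepsilon I_n\neq 0$ at a point where its gradient vanishes; and the relaxed-envelope route does not close the argument either, since Definition~\ref{Pre:def1} genuinely only tests against $\varphi$ with $D\varphi\neq 0$. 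The paper's resolution is to replace $\tfrac{j}{2}|x-y|^2$ by $\tfrac{j}{l}|x-y|^{l}$ with $l>\max\bigl\{2,\tfrac{2+p}{1+p},\tfrac{2+q}{1+q}\bigr\}$. With this exponent, if the maximizer had $x_j=y_j$, then for the test function $\phi(x,t)=u(x_j,t_j)+\Psi_j(x,y_j,t,s_j)-\Psi_j(x_j,y_j,t_j,s_j)$ one has $\widetilde{\Phi}(|D\phi(x,t)|)\,|F(D^2\phi(x,t))|\lesssim j^{1+p}|x-y_j|^{(l-1)p+l-2}+a\,j^{1+q}|x-y_j|^{(l-1)q+l-2}\to 0$ as $x\to y_j$; one can therefore pass to the limit in the viscosity inequalities and reach a contradiction purely from the time-derivative and zero-order terms (using the strict subsolution obtained from the preliminary shift $u\mapsto u-\tfrac{\varepsilon}{T-t}$). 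This shows $x_j\neq y_j$ for large $j$, so the common gradient $\eta_j=j|x_j-y_j|^{l-2}(x_j-y_j)\neq 0$ and the viscosity inequalities apply directly---no case analysis is needed.

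There is a second, related problem with your scheme: the $\varepsilon$-localization makes the two test gradients $\xi_j\neq\zeta_j$, so you pick up a cross-term $[\widetilde{\Phi}(|\xi_j|)-\widetilde{\Phi}(|\zeta_j|)]F(Y_j)$ which you estimate as $O(\varepsilon)\cdot O(j)$. You then propose to send $j\to\infty$ before $\varepsilon\to 0$, but that makes $\varepsilon j\to\infty$, so the error is not controlled. The paper sidesteps this entirely by (i) omitting the $\varepsilon$-term---the maximizer is automatically interior for large $j$ since $u\le v$ on $\partial_{par}Q_T$ and $(x_j,y_j,t_j,s_j)\to(\hat x,\hat x,\hat t,\hat t)$---and (ii) doubling also in time via $\tfrac{j}{2}(t-s)^2$. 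Then $D_x\Psi_j=-D_y\Psi_j=:\eta_j$ exactly, the factor $\widetilde{\Phi}(|\eta_j|)$ is common to both inequalities, and $F(X_j)\le F(Y_j)$ (from $X_j\le Y_j$ and \textbf{(F1)}) closes the argument cleanly.
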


We postpone its proof to \hyperref[Appendix:B]{Appendix B}.

In order to access the gradient decay of viscosity solutions, Theorem \ref{Thm3}, we need the following result:

\begin{Proposition}[{\bf \cite[Theorem 1.1]{LLY24}}] \label{Se2:Pro1}
Assume that $ -1< p < \infty$, $ F $ satisfies \hyperref[F1]{\bf (F1)}, \hyperref[F2]{\bf (F2)} and \hyperref[F3]{\bf (F3)}, and $ f $ is bounded and continuous in $ Q_{1} $. Let $ u $ be a bounded viscosity solution of
\begin{equation*}
  |Du|^{p} F(D^{2}u)-u_{t} = f \quad \text{in} \quad  Q_{1}.
\end{equation*}
Then there exists a constant $ \alpha \in (0,1) $, depending only on $ n, \lambda, \Lambda, p, \|u\|_{L^{\infty}(Q_{1})} $, and $ \|F\|_{C^{1,\kappa}(\mathrm{Sym}(n))} $ such that $ u \in C^{1,\alpha}(\overline{Q_{1/2}}) $ with the estimate
\begin{equation*}
  \|Du\|_{C^{\alpha}(\overline{Q_{1/2}})} \leq \mathrm{C},
\end{equation*}
where $ \mathrm{C} > 0 $ is a constant depending only on $ n, \lambda, \Lambda, p, \|u\|_{L^{\infty}(Q_{1})} $ and $ \|f\|_{L^{\infty}(Q_{1})} $. Moreover, it holds that
\begin{equation*}
  |u(x,t) - u(x,s)| \leq \mathrm{C}|t-s|^{\frac{1+\alpha}{2-\alpha p}} \ \ \text{for all} \ \ (x,t), (x,s) \in Q_{1/2}.
\end{equation*}
\end{Proposition}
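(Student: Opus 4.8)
The plan is to establish this estimate along the lines of the parabolic analogue of the Imbert--Silvestre regularity program for gradient-degenerate fully nonlinear operators, combining an Ishii--Lions doubling argument with a compactness/improvement-of-flatness scheme. After rescaling and normalizing so that $\|u\|_{L^{\infty}(Q_1)}\le 1$ and $\|f\|_{L^{\infty}(Q_1)}$ is as small as we please — which is legitimate because the operator $(M,\xi)\mapsto|\xi|^{p}F(M)$ is scale-invariant under the natural parabolic dilations $Q_r\to Q_1$ — it suffices to produce a universal $C^{1,\alpha}$ expansion at the origin; a generic interior point of $Q_{1/2}$ is then reached by translation and a finite covering.

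\emph{Step 1: spatial H\"older and then Lipschitz estimates.} Fix $\beta\in(0,1)$. I would run the Ishii--Lions method: double the space variables at frozen time and penalize with a strictly concave profile $\phi(s)=s^{\beta}-\omega_0 s^{1+\beta}$ plus a quadratic localization. At an interior maximum of the doubled function the relevant test functions have nonvanishing gradient (otherwise the penalization would itself be too small), so the equation may be evaluated on both the sub- and the supersolution side; uniform ellipticity \hyperref[F1]{\bf (F1)} together with the strict concavity of $\phi$ yields a contradiction once the coupling constant is taken large but universal. This gives $\|u\|_{C^{0,\beta}_x(Q_{1/2})}\le \mathrm{C}$ for every $\beta<1$. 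Refining the competitor so that its spatial profile is genuinely Lipschitz at the origin — the point where one must be more careful than in \cite[Lemma~3.2]{A20} — upgrades this to the interior bound $\|Du\|_{L^{\infty}(Q_{1/2})}\le \mathrm{C}$.

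\emph{Step 2: the $C^{1,\alpha}$ improvement.} The core is a dichotomy at a point $(x_0,t_0)$ and scale $r$ in terms of $\ell:=|Du(x_0,t_0)|$. If $\ell\gtrsim r^{\alpha}$ (non-degenerate regime), then on the intrinsic cylinder $Q_r(x_0,t_0)$ the operator $(M,\xi)\mapsto|\xi|^{p}F(M)$ is uniformly parabolic and $C^{1,\kappa}$ in $(M,\xi)$ near $\xi=Du$, so the classical interior $C^{1,\alpha}$ estimates for uniformly parabolic fully nonlinear equations (\cite{1W92,2W92,3W92}, using \hyperref[F2]{\bf (F2)}--\hyperref[F3]{\bf (F3)}) apply after freezing coefficients. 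If $\ell\lesssim r^{\alpha}$ (degenerate, flat regime), I would argue by compactness: assuming no universal geometric decay of the oscillation of $Du$ from its best affine approximation holds over dyadic intrinsic cylinders, one extracts rescaled solutions $v_k$ which, by the equicontinuity from Step 1, converge locally uniformly to a limit $v_0$; since on the relevant region $|Dv_k|\to 0$ and $\|f_k\|_{L^{\infty}}\to 0$, the limit satisfies $\partial_t v_0=0$ in the viscosity sense and is degenerate-harmonic in $x$, hence affine in $x$, contradicting the assumed failure of decay. Iterating the resulting decay yields the pointwise first-order expansion, and summation over scales gives the stated $C^{1,\alpha}$ bound with $\alpha$ and the constant depending only on the indicated data.

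\emph{Step 3: H\"older continuity in time, and the main obstacle.} With the spatial $C^{1,\alpha}$ bound in hand, I would control the time oscillation by comparison: on a small intrinsic cylinder about $(x_0,t_0)$ one compares $u$ with explicit separated-variables sub/supersolutions of the form $u(x_0,t_0)\pm\big(\mathrm{C}|x-x_0|^{1+\alpha}+A(t-t_0)\big)$, or invokes Lemma~\ref{Se2:lem1} directly, where the intrinsic balance $|x|\sim r$, $|t|\sim r^{2-\alpha p}$ makes both sides of the equation homogeneous along the $C^{1,\alpha}$ profile; optimizing in $r$ produces exactly the exponent $\frac{1+\alpha}{2-\alpha p}$. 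I expect the hard part to be the crossover in Step 2 and, within the degenerate regime, the identification of the limiting equation: this is what forces the H\"older (indeed $C^{0,\beta}$ for all $\beta<1$) and Lipschitz estimates of Step 1 to be genuinely \emph{uniform} under the intrinsic scaling, and is the reason the competitor function in the doubling argument must be chosen with care.
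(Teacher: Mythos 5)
This proposition is not proved in the paper at all: it is quoted verbatim, with attribution, as Theorem~1.1 of \cite{LLY24}, and the text uses it as a black box. So there is no ``paper's own proof'' to compare against; your sketch is a blind reconstruction of what the cited reference (and, in part, \cite{A20}) does, not a reproduction of an argument in this manuscript. With that caveat, the overall roadmap (Ishii--Lions to get spatial H\"older then Lipschitz, a dichotomy between the uniformly parabolic regime $|Du|\gtrsim r^{\alpha}$ and the flat regime, a compactness step in the flat regime, and finally barriers for the time modulus) is the right shape of argument.

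There is, however, a concrete gap in Step~2. You claim that in the flat regime the blow-up limit $v_0$ is ``degenerate-harmonic in $x$, hence affine in $x$.'' That inference is false. A bounded viscosity solution of $|Dv_0|^{p}F(D^{2}v_0)=0$ is not affine: by the cutting lemma of Imbert--Silvestre \cite{CL13} one only gets $F(D^{2}v_0)=0$, and then \hyperref[F2]{\bf (F2)}--\hyperref[F3]{\bf (F3)} together with Evans--Krylov give $v_0\in C^{2,\bar\alpha}_{\mathrm{loc}}$, nothing more. The improvement-of-flatness scheme must be run with that $C^{2,\bar\alpha}$ estimate for the limit (producing a \emph{linear polynomial with superlinear error decay} at a fixed small scale), not with a Liouville statement; as written your contradiction does not close. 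A second, smaller, issue: in Step~3 the barrier $u(x_0,t_0)\pm\bigl(\mathrm{C}|x-x_0|^{1+\alpha}+A(t-t_0)\bigr)$ produces a term $|Dv|^{p}F(D^{2}v)\sim|x|^{(1+p)\alpha-1}$, which blows up as $x\to x_0$ whenever $(1+p)\alpha<1$ (in particular for $-1<p<0$, a range the proposition covers), so the supersolution inequality cannot be verified pointwise with this profile; one must either regularize the vertex (as the paper does in Lemma~\ref{App:lem3} by using $|x|^{m}$ with $m\ge 2$) or restrict $\alpha$ appropriately. Finally, the remark that the operator is ``scale-invariant under the natural parabolic dilations'' should be stated in terms of the \emph{intrinsic} scaling $u_{r}(x,t)=r^{-(1+\alpha)}u(rx,r^{2-\alpha p}t)$; the operator is not invariant under the standard parabolic rescaling unless $u$ is rescaled as well, and keeping the normalizations straight is precisely where the $\frac{1+\alpha}{2-\alpha p}$ exponent is earned.
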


\begin{remark}\label{Sec2:rmk2}
For the case $ p >0 $, the $ C^{1, \kappa} $-regularity assumption of $ F $ in Proposition \ref{Se2:Pro1} is not necessary. In fact, very recently, Lee-Lian-Yun-Zhang \cite{LLYZ25} developed a novel approach combining the Bernstein technique with a refined approximation scheme to establish the time derivative estimates for solutions $ u$. Their results not only confirm the sharpness of the $ C^{1,1/(1+p)}$ spatial regularity, but also yield the Lipschitz continuity of solutions in the time variable.
\end{remark}

The next result is essential in the proof of Lemma \ref{Se3:lem1}. For $ r> 0 $, we denote
$$
\mathcal{Q}_{r}^{-}:=B_{r} \times (-r^{2}, 0].
$$

\begin{Proposition}[{\bf Compactness of solutions}]\label{Se2:Prop2}
Let $ u $ be a bounded viscosity solution to
\begin{equation}\label{Sec2:eq1}
  |Du|^{p} F(D^{2}u) - u_{t} = f  \quad \text{in} \quad \mathcal{Q}_{1}^{-}.
\end{equation}
Assume also that $ 0 \leq p < \infty $, $ f \in C^{0}(\mathcal{Q}_{1}^{-}) \cap L^{\infty}(\mathcal{Q}_{1}^{-}) $ and $ F $ satisfies \hyperref[F1]{\bf (F1)} and \hyperref[F2]{\bf (F2)}. Then there exist two constants $ \mathrm{C}_{1} = \mathrm{C}_{1}(p,n) > 0 $ and $ \mathrm{C}_{2} = \mathrm{C}_{2}(p, n, \|u\|_{L^{\infty}(\mathcal{Q}_{1}^{-})}, \|f\|_{L^{\infty}(\mathcal{Q}_{1}^{-})}) > 0 $ such that for all $ (x,t), (x,s), (y,t) \in \mathcal{Q}_{r}^{-}  \,\,\,(0<r<1) $, it holds
\begin{equation*}
  |u(x,t) - u(y,t)| \leq \mathrm{C}_{1}r \left(\|u\|_{L^{\infty}(\mathcal{Q}_{1}^{-})} + \|u\|_{L^{\infty}(\mathcal{Q}_{1}^{-})}^{\frac{1}{1+p}} + \|f\|_{L^{\infty}(\mathcal{Q}_{1}^{-})}^{\frac{1}{1+p}}   \right)|x-y|,
\end{equation*}
and
\begin{equation*}
  |u(x,s) - u(x,t)| \leq \mathrm{C}_{2} r^{\frac{1}{2+p}}|s-t|^{\frac{1}{2+p}}.
\end{equation*}
\end{Proposition}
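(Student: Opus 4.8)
\textbf{Proof proposal for Proposition \ref{Se2:Prop2}.}

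The plan is to follow the Ishii--Lions strategy, as used in \cite{A20}, in three stages: first establish interior spatial H\"older regularity $C_x^{0,\beta}$ for every $\beta \in (0,1)$, then bootstrap to the Lipschitz estimate $C_x^{0,1}$, and finally deduce the $C_t^{1/(2+p)}$ estimate by a comparison argument combined with the spatial Lipschitz bound. For the first stage, fix $(x_0,t_0)$ in the interior and consider, for suitable constants $L_1, L_2 > 0$ and a concave modulus $\omega(s) = s^\beta - \omega_0 s^{1+\beta}$ (or a similar choice), the auxiliary function
\[
\Theta(x,y,t) = u(x,t) - u(y,t) - L_1\,\omega(|x-y|) - L_2\bigl(|x - x_0|^2 + |y - x_0|^2\bigr) - \frac{\sigma}{t - \tau},
\]
on $\mathcal{Q}_r^- \times \mathcal{Q}_r^-$, and argue by contradiction that $\Theta \le 0$ once $L_1$ is large depending only on $p,n,\|u\|_{L^\infty},\|f\|_{L^\infty}$. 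If the supremum were positive, it would be attained at an interior point $(\bar x, \bar y, \bar t)$ with $\bar x \ne \bar y$; one then applies the parabolic theorem on sums (Crandall--Ishii--Lions / Jensen--Ishii lemma) to produce limiting parabolic jets and symmetric matrices $X, Y$ with $X \le Y + (\text{error})$, plugs them into the two viscosity inequalities for the subsolution and supersolution copies of $u$, and uses the uniform ellipticity \hyperref[F1]{\bf (F1)} together with the concavity of $\omega$ to extract a strictly negative quantity from the second-order terms that dominates the contributions of the gradient powers $|Du|^p$ and the bounded right-hand side $f$. The degeneracy $|p|\ge 0$ is handled by noting that at the max point the gradient variable is comparable to $L_1\,\omega'(|\bar x - \bar y|)$, which is bounded above and below, so $|Du|^p$ is a controlled quantity and does not destroy the ellipticity gain.

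For the second stage, one repeats the argument with the sharp Lipschitz competitor, replacing $L_1\,\omega(|x-y|)$ by $L_1|x-y|$ plus a genuinely strictly concave lower-order correction such as $-L_1\,\omega_0 |x-y|^{1+\beta}$ with $\beta$ close to $1$; the already-established $C_x^{0,\beta}$ bound is used to guarantee that the max point stays interior and that the penalization $L_2(|x-x_0|^2 + |y-x_0|^2)$ can be taken uniformly. The same jet computation, now exploiting that the strictly concave correction produces a quantitative ellipticity gain proportional to $\omega_0|\bar x - \bar y|^{\beta - 1}$, yields the Lipschitz estimate with the asserted dependence, and the factor $r$ in the statement comes from a standard rescaling $u_r(x,t) = u(rx, r^\theta t)$ — or rather the parabolic scaling adapted to \eqref{Sec2:eq1}, $u_r(x,t)= u(rx, r^{2+p\cdot 0}\cdot\!\ldots)$ — tracking how the $L^\infty$ norms of $u$ and $f$ transform; here the exponents $\tfrac{1}{1+p}$ on $\|u\|_{L^\infty}$ and $\|f\|_{L^\infty}$ reflect that $|Du|^p$ scales like $r^{-p}$ against the second-order term scaling like $r^{-2}$.

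For the third stage, fix $x$ and two times $s < t$ in $\mathcal{Q}_r^-$. On the smaller cylinder $B_\rho(x) \times (s, t)$ with $\rho = (t-s)^{1/(2+p)}$, build barriers of the form $u(x,s) \pm \bigl[ \mathrm{C}_1 r(\cdots)\rho + A(t'-s)\rho^{-?} + \ldots\bigr]$, or more precisely compare $u$ on this cylinder with the explicit supersolution/subsolution $w^\pm(x',t') = u(x,s) \pm \bigl( \mathrm{Lip}\cdot|x'-x| + C_\star (t'-s)/\rho^{2+p}\cdot\rho^{\,?}\bigr)$; the point is that a function affine in $x'$ has $D^2 w = 0$, so $|Dw|^p F(D^2 w) = 0$, and one only needs the time term to absorb $\pm\|f\|_{L^\infty}$, which fixes the coefficient so that $w^+$ is a supersolution and $w^-$ a subsolution. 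Applying the comparison principle (Lemma \ref{Se2:lem1}, with the choice $\widetilde\Phi(|Du|)=|Du|^p$, $a\equiv 0$, $\lambda_0 f \equiv 0$, and source terms $\pm\|f\|_{L^\infty}$) on $B_\rho(x)\times(s,t)$ — using the already-proven spatial Lipschitz bound on the lateral boundary $\partial B_\rho(x)\times(s,t)$ and the trivial agreement on the bottom $\overline{B_\rho(x)}\times\{s\}$ — gives $|u(x,t) - u(x,s)| \lesssim \mathrm{Lip}\cdot\rho \lesssim r^{1/(2+p)} |t-s|^{1/(2+p)}$, which is the claimed estimate.

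The main obstacle I expect is the first two stages: making the Ishii--Lions penalization robust in the presence of the degenerate factor $|Du|^p$ with $p$ possibly large. One must ensure the max point has $\bar x \ne \bar y$ (so the gradient is nonvanishing and the viscosity inequalities are usable in the form of Definition \ref{Pre:def1}, which only tests at points with $D\varphi \ne 0$), and one must carefully balance the ellipticity gain from the concavity of the test modulus against a term of size $L_1^p \cdot (\text{ellipticity loss})$ coming from $|D\varphi|^p$ times the Pucci bound on the second-order error matrix. This is the point where the improvement over \cite[Lemma 3.2]{A20} enters: replacing $|x|^2$ by $|x|^m$ with $2 \le m < 2 + 1/p$ in the competitor gives an extra degree of freedom that keeps the degenerate gradient term subordinate to the elliptic term throughout the scaling, and it is the delicate choice of $m$ (and of $\beta$ near $1$) that must be executed with care.
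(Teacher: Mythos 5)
Your stages 1 and 2 match the paper's Appendix A (Lemmas \ref{App:lem1}--\ref{App:lem2}): Ishii--Lions doubling with $\varphi(s)=s^\beta$ for the H\"older step, $\varphi(s)=s-\kappa_0 s^{\upsilon}$ ($1<\upsilon<2$) for the Lipschitz upgrade, quadratic localization penalties, and the same splitting of the viscosity inequality into error terms $D_1,\dots,D_4$. Your variant of the modulus and the $\sigma/(t-\tau)$ time penalization are acceptable alternatives to the paper's $\frac{L_1}{2}(t-t_0)^2$. Stage 3, however, has a genuine gap. The barrier you sketch is either affine in $x'$ (hence $D^2 w=0$) or built from $\mathrm{Lip}\cdot|x'-x|$, which is not $C^2$: $D^2|x'-x|$ has a singularity of order $|x'-x|^{-1}$ at $x'=x$, so $|Dw|^p F(D^2 w)$ blows up there and $w^\pm$ are not usable classical barriers. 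If you fall back on the affine version on the intrinsic cylinder $B_\rho(x)\times(s,t)$ with $\rho=(t-s)^{1/(2+p)}$, the lateral-boundary inequality you must verify is $u(x',t')\le u(x,s)+\mathrm{Lip}\cdot\rho + C_\star(t'-s)$ for $|x'-x|=\rho$; the only decomposition available is $u(x',t')\le u(x,t')+\mathrm{Lip}\cdot\rho$, which reduces the boundary check to a bound on $|u(x,t')-u(x,s)|$ — precisely the quantity you are trying to estimate. The argument is circular as written.

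The paper avoids this by comparing on the \emph{fixed} ball $B_{3/4}$ with the barrier $v(x,t)=u(0,t_0)+M_1(t-t_0)+M_2|x|^m+\tfrac{1}{\widetilde m}\eta^{\widetilde m}$, where $2\le m<2+1/p$ and $\tfrac{1}{m}+\tfrac{1}{\widetilde m}=1$. Because $m\ge 2$, $|x|^m\in C^2$ and $|Dv|^p F(D^2 v)\lesssim |x|^{(m-1)p+(m-2)}$ is \emph{bounded} on $B_{3/4}$, which fixes $M_1$. The lateral boundary is handled with the global $L^\infty$ bound on $u$, and the bottom with the already-proved Lipschitz estimate, both absorbed via Young's inequality into $\frac{1}{m}(\,\cdot/\eta)^m+\frac{1}{\widetilde m}\eta^{\widetilde m}$; this pins $M_2\simeq \eta^{-m}$, and optimizing in $\eta$ produces the $|t-t_0|^{1/(2+p)}$ rate. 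The restriction $m<2+1/p$ is exactly what makes the competing power $|t-t_0|^{(2+p(2-m))/(2+p)}$ at least as strong as $|t-t_0|^{1/(2+p)}$. Note also that you attribute the $|x|^m$ refinement over \cite[Lemma~3.2]{A20} to the spatial H\"older and Lipschitz stages; in fact, Lemmas \ref{App:lem1}--\ref{App:lem2} use only quadratic localization, and the $|x|^m$ device is the heart of the time-regularity Lemma \ref{App:lem3}, where it replaces the shrinking-cylinder idea you propose.
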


\begin{remark}
To the best of our knowledge, there is no relevant literature containing this complete proof. To keep the paper easy to read, we postpone its proof to \hyperref[Appendix:A]{Appendix A}. By the standard scaling arguments, it suffices to consider $ r =7/8 $.
\end{remark}

\subsection{The existence of dead-core solutions}
Finally, we shall establish the existence of a viscosity solution to the problem
\begin{equation}\tag{{\bf D-BVP}}
\label{Se2:D-BVP}
\left\{
     \begin{aligned}
     & |Du|^{p} F(D^{2}u) - u_{t} = \lambda_{0}(x,t) u^{\mu}\chi_{\{u>0\}}(x,t)   && \text{in}  \quad Q_{1},   \\
     &  u(x,t) = h(x,t)     &&   \text{on}  \quad  \partial B_{1} \times (-1,1),          \\
     & u(x,-1) = \widehat{u}(x)    &&   \text{in}  \quad \overline{B_{1}},
     \end{aligned}
     \right.
\end{equation}
for continuous, bounded nonnegative functions $ \widehat{u}$ and $ h $ fulfilling the compatibility condition $ h(x,-1) = \widehat{u}(x) $ for any $ x \in \partial B_{1} $. The existence of a dead-core solution to \eqref{Se2:D-BVP} is established by employing the well-known Perron's method together with the preceding Lemma \ref{Se2:lem1}. Specifically,

\begin{Theorem}
Suppose that $ F $ satisfies the assumption \hyperref[F1]{\bf (F1)}. In addition, $ h $ and $  \widehat{u} $ are continuous, bounded, nonnegative functions. If there exist a viscosity subsolution $ u_{\ast} $ to \eqref{Se2:D-BVP} and a viscosity supersolution $ u^{\ast} $ to \eqref{Se2:D-BVP} such that
\begin{equation*}
u^{\ast}   =    u_{\ast} \quad \text{on}  \quad \partial_{\mathrm{par}} Q_{1},
\end{equation*}
then there exists a nonnegative viscosity solution $ u \in C^{0}(\overline{Q_{1}}) $ to \eqref{Se2:D-BVP}.
\end{Theorem}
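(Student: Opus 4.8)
The plan is to invoke Perron's method, which is by now standard for viscosity solutions once one has (i) a comparison principle and (ii) an ordered pair of sub/supersolutions agreeing on the parabolic boundary. Both ingredients are in place: Lemma~\ref{Se2:lem1} provides the comparison principle for the degenerate operator (with $\widetilde\Phi(|Du|)=|Du|^p$, $f(u)=u^\mu\chi_{\{u>0\}}$, which is continuous and non-decreasing on $\mathbb{R}^+$, and $g\equiv0$), and the hypothesis supplies $u_\ast\le u^\ast$ with $u_\ast=u^\ast$ on $\partial_{par}Q_1$. First I would define the Perron family
\[
  \mathcal{S}:=\Big\{\, w \;:\; w \text{ is a viscosity subsolution of \eqref{Se2:D-BVP} in } Q_1,\ u_\ast\le w\le u^\ast \text{ in } \overline{Q_1}\,\Big\},
\]
which is nonempty since $u_\ast\in\mathcal{S}$, and set $u(x,t):=\sup_{w\in\mathcal{S}}w(x,t)$. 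By construction $u_\ast\le u\le u^\ast$, so $u$ is bounded, nonnegative (since $u_\ast\ge0$), and since $u_\ast=u^\ast$ on $\partial_{par}Q_1$ the boundary and initial data are attained: $u=h$ on $\partial B_1\times(-1,1)$ and $u(\cdot,-1)=\widehat u$. Continuity of $u$ up to $\partial_{par}Q_1$ then follows from the squeeze $u_\ast\le u\le u^\ast$ together with continuity of $u_\ast,u^\ast$ there.

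The heart of the argument is the usual two-step verification. Step one: the upper semicontinuous envelope $u^*$ (I will write it $\overline u$ to avoid clashing with the supersolution notation) is a viscosity subsolution. This uses the standard fact that the supremum of a family of subsolutions, suitably regularized, remains a subsolution — one takes a test function $\varphi$ touching $\overline u$ from above at an interior point $(x_0,t_0)$ with $D\varphi(x_0,t_0)\neq0$, produces via the definition of $\overline u$ a sequence $w_k\in\mathcal{S}$ and points $(x_k,t_k)\to(x_0,t_0)$ at which $w_k-\varphi$ has a local max, applies the subsolution inequality for $w_k$ there, and passes to the limit using $D\varphi(x_0,t_0)\neq0$ (so the test-function constraint persists for large $k$) and continuity of $F$ and of $\lambda_0(x,t)s^\mu\chi_{\{s>0\}}$ in its arguments. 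The degeneracy $|Du|^p$ is harmless here precisely because the definition of viscosity solution (Definition~\ref{Pre:def1}) only tests at points with non-vanishing gradient. Step two: the lower semicontinuous envelope $u_*$ is a viscosity supersolution. If not, there is a test function $\psi$ touching $u_*$ from below at some interior $(\hat x,\hat t)$ with $D\psi(\hat x,\hat t)\neq0$ and strict violation of the supersolution inequality; then a small upward bump of $\psi$, say $\psi+\varepsilon\eta$ for a suitable localized $\eta$, is a strict subsolution lying below $u$ near $(\hat x,\hat t)$ and below $u^\ast$ (using $u_*\le u\le u^\ast$ and continuity), so $\max\{u,\psi+\varepsilon\eta\}$ is an admissible element of $\mathcal{S}$ strictly exceeding $u$ somewhere — contradicting maximality. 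One must check that the bump can be localized away from $\partial_{par}Q_1$ and away from points where the gradient could vanish, which is where $D\psi(\hat x,\hat t)\neq0$ is used.

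Finally, having shown $\overline u$ is a subsolution and $u_*$ is a supersolution with $\overline u=u_*$ on $\partial_{par}Q_1$ (both squeezed between $u_\ast$ and $u^\ast$), the comparison principle of Lemma~\ref{Se2:lem1} gives $\overline u\le u_*$ in $Q_1$; since trivially $u_*\le u\le\overline u$, we conclude $u_*=u=\overline u$, so $u$ is continuous and is simultaneously a sub- and supersolution, i.e. a viscosity solution in $C(\overline{Q_1})$, nonnegative because $u\ge u_\ast\ge0$. The main obstacle I anticipate is Step two: the bump-construction must respect the degenerate structure — one needs the perturbed function to remain a \emph{strict} subsolution of the \emph{degenerate} equation near $(\hat x,\hat t)$, which forces the bump to be supported where $|D\psi|$ is bounded below, and one must also handle the interface $\{u>0\}$ carefully since the right-hand side is only continuous (not Lipschitz) across $\{u=0\}$; however, the monotonicity and continuity of $s\mapsto\lambda_0 s^\mu\chi_{\{s>0\}}$ assumed in Lemma~\ref{Se2:lem1} are exactly what make both the stability passage in Step one and the comparison in the last step go through, so no genuinely new difficulty arises beyond careful bookkeeping of the gradient-nonvanishing condition inherent to Definition~\ref{Pre:def1}.
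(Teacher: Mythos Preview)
Your proposal is correct and follows exactly the approach indicated by the paper: the paper does not give a detailed proof but simply states that existence is established via Perron's method together with the comparison principle of Lemma~\ref{Se2:lem1}, which is precisely the argument you have outlined. Your careful attention to the gradient-nonvanishing constraint in Definition~\ref{Pre:def1} and to the role of monotonicity and continuity of $s\mapsto\lambda_0 s^\mu\chi_{\{s>0\}}$ in the comparison step goes beyond what the paper spells out, but is entirely in the spirit of the intended argument.
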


\vspace{2mm}

\section{Improved regularity estimates and non-degeneracy of solutions}\label{Section 3}

In this section, we present the proofs of           Theorems~\ref{Thm1} and~\ref{Thm2}. We first establish a sharp regularity estimate for a normalized class of viscosity solutions within the unit cylinder $Q_{1}$, and subsequently extend this result to more general dead-core viscosity solutions.

Before stating the proof of the main result, we make the following definition.

\begin{Definition}\label{Sec3:def1}
  For any fully nonlinear operator $ F $ satisfying \hyperref[F1]{\bf (F1)}, \hyperref[F2]{\bf (F2)} and \hyperref[F3]{\bf (F3)}, we say that $ u \in \mathbb{J}(F, \lambda_{0}, \mu)(Q_{1})$ if

\begin{enumerate}[I)]

\item  \label{b1} $ |Du|^{p} F(D^{2}u) - u_{t} = \lambda_{0}(x,t) u^{\mu}\chi_{\{u>0\}}(x,t) $ in $ Q_{1} $ with $ 0 <  \mu < \min\{1, p+1\}=1  $;

\item \label{b2} $ 0 \leq u \leq 1 $, and $ 0< \mathcal{M}_{1} \leq \lambda_{0}(x,t) \leq \mathcal{M}_{2} <\infty $ in $ Q_{1} $;

\item  \label{bbb} $ \partial_{t} u \geq 0  $ in $ Q_{1}$ (in the viscosity sense);

\item $ u(0,0) = 0 $.
\end{enumerate}
\end{Definition}

\begin{remark}
Assumption~\ref{bbb}), namely $\partial_t u \ge 0$, in Definition~\ref{Sec3:def1}
is imposed for purely technical reasons.
It enters the proof of Theorem~\ref{Thm1} at a single point, where it guarantees
that the supremum of $u$ on the zero time level is bounded from below by the supremum
over the corresponding negative time levels; see Lemma~\ref{Se3:lem1} for details.
\end{remark}

\begin{remark}
The condition $\partial_t u \ge 0$ is also natural from a macroscopic perspective.
More precisely, consider the initial--boundary value problem associated with~\eqref{DCP},
with boundary datum $\varphi$:
\begin{equation}\tag{{\bf D-BVP1}}
\label{Se3:D-BVP1}
\left\{
     \begin{aligned}
     & |Du|^{p} F(D^{2}u) - \partial_{t} u
     = \lambda_{0}(x,t)\, u^{\mu}\chi_{\{u>0\}}(x,t)
     &&  \text{in } Q_{T}, \\
     & u(x, t) = \varphi(x, t)
     && \text{on } \partial_{\mathrm{par}} Q_{T},
     \end{aligned}
\right.
\end{equation}
where $\lambda_{0}$ and $\varphi$ are monotone in $t$, and $\varphi(x,0)=0$.
In view of~\cite[Appendix]{Sh03}, one can also derive $\partial_t u \ge 0$ in $\{u>0\}$
by means of a penalization argument.
For the reader’s convenience, we briefly outline the main steps.
\smallskip
\noindent\emph{Step~1.}
We introduce the following approximation of~\eqref{Se3:D-BVP1}:
\begin{equation}\tag{{\bf D-BVP2}}
\label{Se3:D-BVP2}
\left\{
     \begin{aligned}
     & (|Du^{\varepsilon}|^{2}+\varepsilon)^{\frac{p}{2}}
       F(D^{2}u^{\varepsilon}) - \partial_{t} u^{\varepsilon}
       = \beta_{\varepsilon}(u^{\varepsilon})
         + \lambda_{0}^{\varepsilon}(u^{\varepsilon})^{\mu}
     &&  \text{in } Q_{T}, \\
     & u^{\varepsilon}(x, t) = \varphi^{\varepsilon}(x, t)
     && \text{on } \partial_{\mathrm{par}} Q_{T},
     \end{aligned}
\right.
\end{equation}
where $\beta_{\varepsilon}$ is defined as in~\cite[Appendix]{Sh03}, and
$\lambda_{0}^{\varepsilon}$ is a smooth approximation of $\lambda_{0}$.
\smallskip
\noindent\emph{Step~2.}
Let $(u_{1}^{\varepsilon},\lambda_{0,1}^{\varepsilon},\varphi_{1}^{\varepsilon})$
and $(u_{2}^{\varepsilon},\lambda_{0,2}^{\varepsilon},\varphi_{2}^{\varepsilon})$
be solutions to~\eqref{Se3:D-BVP2} satisfying
\[
\lambda_{0,1}^{\varepsilon} \le \lambda_{0,2}^{\varepsilon}
\quad \text{and} \quad
\varphi_{1}^{\varepsilon} \ge \varphi_{2}^{\varepsilon}.
\]
Assume that the relatively open set
$\Omega := \{u_{2}^{\varepsilon} > u_{1}^{\varepsilon}\}$ is nonempty.
If $u_{1}^{\varepsilon} \ge u_{2}^{\varepsilon}$ on $\partial_{\mathrm{par}} Q_{T}$,
then $\Omega \subset Q_{T}$.
Using the monotonicity of $\beta_{\varepsilon}$, we obtain
\begin{equation}\label{Se3:rk11}
 F^{\varepsilon}(D^{2}u_{1}^{\varepsilon}) - \partial_{t}u_{1}^{\varepsilon}
 \le
 F^{\varepsilon}(D^{2}u_{2}^{\varepsilon}) - \partial_{t}u_{2}^{\varepsilon}
 \quad \text{in } \Omega,
\end{equation}
where
$F^{\varepsilon}(D^{2}u^{\varepsilon})
:= (|Du^{\varepsilon}|^{2}+\varepsilon)^{\frac{p}{2}} F(D^{2}u^{\varepsilon})$.
Setting $\omega^{\varepsilon}:=u_{2}^{\varepsilon}-u_{1}^{\varepsilon}$,
we have $\omega^{\varepsilon}>0$ in $\Omega$.
If $\omega^{\varepsilon}$ attains its maximum at $(x_{0},t_{0})\in\Omega$, then
$\partial_{t}\omega^{\varepsilon}(x_{0},t_{0})\ge0$ and
$D\omega^{\varepsilon}(x_{0},t_{0})=0$.
Invoking assumptions~\hyperref[F1]{\bf(F1)} and~\hyperref[F3]{\bf(F3)}, we deduce
\[
F^{\varepsilon}(D^{2}u_{2}^{\varepsilon})
- F^{\varepsilon}(D^{2}u_{1}^{\varepsilon}) \le 0.
\]
Combined with~\eqref{Se3:rk11}, this yields
$\partial_t \omega^{\varepsilon}(x_{0},t_{0})=0$ and
$F^{\varepsilon}(D^{2}u_{2}^{\varepsilon})
= F^{\varepsilon}(D^{2}u_{1}^{\varepsilon})$,
which contradicts the fact that the two triples are distinct solutions of~\eqref{Se3:D-BVP2}.
Hence, $\Omega$ must be empty.
\smallskip
\noindent\emph{Step~3.}
For $t>0$ and $h>0$, define
\begin{align*}
& u_{1}^{\varepsilon}(x,t) := u^{\varepsilon}(x,t+h),
\qquad
u_{2}^{\varepsilon}(x,t) := u^{\varepsilon}(x,t), \\
& \lambda_{0,1}^{\varepsilon}(x,t) := \lambda_{0}^{\varepsilon}(x,t+h),
\qquad
\lambda_{0,2}^{\varepsilon}(x,t) := \lambda_{0}^{\varepsilon}(x,t), \\
& \varphi_{1}^{\varepsilon}(x,t) := \varphi^{\varepsilon}(x,t+h),
\qquad
\varphi_{2}^{\varepsilon}(x,t) := \varphi^{\varepsilon}(x,t).
\end{align*}
The remaining argument establishing $\partial_t u \ge 0$ in $\{u>0\}$
follows exactly as in~\cite[Appendix]{Sh03}.
\end{remark}

Next, we shall define $\displaystyle L_{(r,x_{0},t_{0})}[u] := \sup_{Q_{r}^{-}(x_{0},t_{0})} u(x,t)$ and the set
\begin{equation*}
  \mathbb{V}_{p,\mu}[u]:= \bigg\{j \in \mathbb{N} \cup \{0\}:  L_{1/2^{j}}[u] \leq 2^{\frac{2+p}{1+p-\mu}} \max \big\{1, 1/\mathrm{C}_{0}^{*} \big\} L_{1/2^{j+1}}[u] \bigg\},
\end{equation*}
where $ \mathrm{C}_{0}^{*} = \mathrm{C}_{0}^{*}(n, \lambda, \Lambda, \mathcal{M}_{1}, \mathcal{M}_{2}, p, \mu) $ is the positive constant in Theorem \ref{Thm2}. Noticing that the set $ \mathbb{V}_{p,\mu}[u] \neq \emptyset $. In fact, in the spirit of Theorem \ref{Thm2} and \ref{b2}) in Definition \ref{Sec3:def1}, we have
\begin{equation*}
  L_{1/2}[u] \geq \mathrm{C}_{0}^{*}\bigg( \frac{1}{2}  \bigg)^{\frac{2+p}{1+p-\mu}} \geq \mathrm{C}_{0}^{*}\bigg( \frac{1}{2}  \bigg)^{\frac{2+p}{1+p-\mu}} L_{1}[u],
\end{equation*}
which yields that
\begin{equation*}
  L_{1}[u] \leq 2^{\frac{2+p}{1+p-\mu}} \max\{1, 1/\mathrm{C}_{0}^{*}\} L_{1/2}[u].
\end{equation*}
Hence $ j=0 \in  \mathbb{V}_{p,\mu}[u] $.

We begin by deriving improved regularity estimates for functions on $\mathbb{J}(F, \lambda_{0}, \mu)(Q_{1})$ along the free boundary.

\begin{Lemma}\label{Se3:lem1}
There exists a positive constant $ \mathrm{C}_{0} $ such that
\begin{equation}\label{Sec3:eq1}
  L_{1/2^{j+1}}[u] \leq \mathrm{C}_{0} \bigg(  \frac{1}{2^{j}} \bigg)^{\frac{2+p}{1+p-\mu}}
\end{equation}
for all $ u \in \mathbb{J}(F, \lambda_{0}, \mu)(Q_{1}) $ and $ j \in  \mathbb{V}_{p,\mu}[u] $.
\end{Lemma}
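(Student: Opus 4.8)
The plan is to argue by contradiction, following the compactness-plus-strong-minimum-principle scheme outlined in the introduction. Suppose \eqref{Sec3:eq1} fails. Then for every $k \in \mathbb{N}$ there exist $u_k \in \mathbb{J}(F,\lambda_0,\mu)(Q_1)$ and $j_k \in \mathbb{V}_{p,\mu}[u_k]$ with
\[
L_{1/2^{j_k+1}}[u_k] > k \left( \frac{1}{2^{j_k}} \right)^{\frac{2+p}{1+p-\mu}}.
\]
Since each $u_k$ is bounded by $1$, a standard argument forces $j_k \to \infty$. The first step is to perform the \emph{intrinsic rescaling}: set $\ell_k := L_{1/2^{j_k+1}}[u_k]$, choose the time-dilation factor $\alpha_k$ so that the equation is preserved (this is where the inhomogeneity forces $\alpha_k$ to depend on $\ell_k^{\,p/(2+p)}$ together with the dyadic scale, matching the intrinsic time exponent $\theta$), and define
\[
v_k(x,t) = \frac{u_k\!\left(\tfrac{1}{2^{j_k}}x,\, \alpha_k t\right)}{\ell_k} \quad \text{in } Q_1 .
\]
One then checks: (a) $v_k$ solves a degenerate parabolic equation $|Dv_k|^p \widetilde{F}_k(D^2 v_k) - \partial_t v_k = f_k$ in $Q_1$ with $\widetilde{F}_k$ still satisfying \textbf{(F1)}, \textbf{(F2)}, and with right-hand side $f_k$ of the form $\lambda_0(\cdot)\, \ell_k^{\mu-1}(\text{scaling factor})\, v_k^\mu \chi_{\{v_k>0\}}$, which tends to $0$ uniformly because the combination of exponents was chosen precisely so that the prefactor vanishes as $k\to\infty$ (this is the role of $\mu<1$ and of the definition of $\theta$); (b) $0 \le v_k \le 1$ still; (c) $v_k(0,0)=0$; (d) by the defining property of $\mathbb{V}_{p,\mu}[u_k]$ and the non-degeneracy Theorem~\ref{Thm2}, $\sup_{Q_{1/2}^-} v_k = 1$, so $v_k \not\equiv 0$ with a uniform lower bound on its size; and (e) $\|f_k\|_\infty \to 0$ but also the contradiction hypothesis gives $\ell_k 2^{j_k (2+p)/(1+p-\mu)} > k \to \infty$, which is what actually kills the source term.

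The second step is to extract a limit. Apply the compactness Proposition~\ref{Se2:Prop2} to the $v_k$: it gives a uniform interior modulus of continuity for $v_k$ in space (Lipschitz, with constant controlled by $\|v_k\|_\infty$ and $\|f_k\|_\infty$, both bounded) and a uniform $C_t^{1/(2+p)}$ modulus in time. By Arzelà–Ascoli (on a compactly contained subcylinder, then a diagonal argument), a subsequence of $v_k$ converges locally uniformly to some $v_0 \in C(Q_1)$ with $0 \le v_0 \le 1$, $v_0(0,0)=0$, and $\sup_{\overline{Q_{1/2}^-}} v_0 = 1$. Using stability of viscosity solutions under uniform convergence, together with $\widetilde{F}_k \to \widetilde F_0$ (satisfying \textbf{(F1)}, \textbf{(F2)}) and $f_k \to 0$ uniformly, $v_0$ is a viscosity solution of the homogeneous degenerate equation
\[
|Dv_0|^p \widetilde F_0(D^2 v_0) - \partial_t v_0 = 0 \quad \text{in } Q_1 .
\]

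The third and decisive step is to derive a contradiction from $v_0 \ge 0$, $v_0(0,0)=0$, and $v_0 \not\equiv 0$ on $\overline{Q_{1/2}^-}$. Here the obstacle is the lack of a ready-made strong minimum principle for the degenerate homogeneous equation. The plan is to exploit the uniform time-Hölder bound that passes to $v_0$: because the $C_t^{1/(2+p)}$ estimate in Proposition~\ref{Se2:Prop2} carries a small constant once rescaled appropriately (one should set this up so that oscillation in $t$ over the relevant scale is negligible), one reduces to the stationary profile $w(x) := v_0(x,0)$, which is a nonnegative viscosity supersolution (in fact solution) of $|Dw|^p \widetilde F_0(D^2 w) = 0$ with $w(0)=0$. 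Then invoke the cutting lemma \cite[Lemma~6]{CL13} to remove the gradient degeneracy on the set where $Dw \ne 0$, and apply the classical strong minimum principle \cite[Theorem~2.1]{DaLio04} (or \cite[Theorem~3.1]{GO25}) to conclude $w \equiv 0$ near the origin, hence $v_0 \equiv 0$ in a full space-time neighborhood of $(0,0)$ — contradicting $\sup_{\overline{Q_{1/2}^-}} v_0 = 1$ obtained from non-degeneracy. I expect this last step — carefully justifying the passage to the time-independent problem and correctly invoking the cutting lemma in the viscosity framework for the degenerate operator — to be the main technical difficulty; the rescaling bookkeeping in the first step (getting $\alpha_k$ and all exponents to line up so that the source term genuinely vanishes) is the other delicate point, though it is more computational than conceptual.
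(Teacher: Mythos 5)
Your proposal matches the paper's own proof essentially step for step: the same contradiction setup, the same intrinsic rescaling $v_k(x,t)=u_k(2^{-j_k}x,\alpha_k t)/L_{1/2^{j_k+1}}[u_k]$ with $\alpha_k = 2^{-(p+2)j_k}\big(L_{1/2^{j_k+1}}[u_k]\big)^{-p}$, compactness via Proposition~\ref{Se2:Prop2}, the observation that the rescaled $C_t^{1/(2+p)}$ estimate carries a factor $k^{-p/(2+p)}$ forcing $\partial_t v_0 \equiv 0$, and then the cutting lemma plus the strong minimum principle applied to $\widetilde F_0(D^2 v_0)=0$. Two details to tighten in an actual write-up: first, the normalization is by $L_{1/2^{j_k+1}}[u_k]$, not $L_{1/2^{j_k}}[u_k]$, so the correct bound is $0\le v_k \le \Theta := 2^{\frac{2+p}{1+p-\mu}}\max\{1,1/\mathrm{C}_0^*\}$ (coming from the definition of $\mathbb{V}_{p,\mu}[u_k]$), not $0\le v_k\le 1$; second, the factor $k^{-p}$ in the time-oscillation estimate is only useful for $p>0$, so the case $p=0$ must be treated separately — there the operator is uniformly parabolic and one can invoke the strong minimum principle directly, as in \cite{SO19}, without passing through the cutting lemma.
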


\begin{proof}
We argue by contradiction. Suppose there exist $u_{k} \in \mathbb{J}(F, \lambda_{0}, \mu)(Q_{1})$ and $j_{k} \in \mathbb{V}_{p,\mu}[u_{k}]$ such that
\begin{equation}\label{Sec3:eq2}
  L_{1/2^{j_{k}+1}}[u_{k}] > k \left( \frac{1}{2^{j_{k}}} \right)^{\frac{2+p}{1+p-\mu}}.
\end{equation}
Define
\begin{equation*}
  \alpha_{k} := \frac{1}{2^{(p+2)j_{k}}} \frac{1}{(L_{1/2^{j_{k}+1}}[u_{k}])^{p}},
  \qquad \text{and} \qquad
  v_{k}(x,t) := \frac{u_{k}(\frac{1}{2^{j_{k}}}x, \alpha_{k}t)}{L_{1/2^{j_{k}+1}}[u_{k}]}
  \quad \text{in } \quad Q_{1}.
\end{equation*}
Direct computation shows that $v_{k}$ satisfies:
\begin{enumerate}[i)]
\item $0 \leq v_{k}(x,t) \leq \dfrac{L_{1/2^{j_{k}}}[u_{k}]}{L_{1/2^{j_{k}+1}}[u_{k}]} \leq \Theta := 2^{\frac{2+p}{1+p-\mu}} \max \big\{1, \tfrac{1}{\mathrm{C}_{0}^{*}}\big\}$ in $Q_{1}^{-}$, and $v_{k}(0,0) = 0$;

\item $L_{1/2}[v_{k}] \geq 1$, and $ \partial_{t} v_{k} \geq 0 $ in $Q_{1}^{-}$(in the viscosity sense);

\item In the viscosity sense,
\begin{equation*}
  |Dv_{k}|^{p} \widetilde{F}_{k}(D^{2}v_{k}) - (v_{k})_{t}
  = \frac{1}{2^{(p+2)j_{k}}} \frac{1}{(L_{1/2^{j_{k}+1}}[u_{k}])^{1+p-\mu}}
  \lambda_{0}\!\left(\tfrac{1}{2^{j_{k}}}x, \alpha_{k}t\right) (v_{k})_{+}^{\mu} \quad \text{in } \quad Q_{1},
\end{equation*}
where
\begin{equation*}
  \widetilde{F}_{k}(\mathrm{X})
  := \frac{1}{2^{2j_{k}}L_{1/2^{j_{k}+1}}[u_{k}]}
  F\big(2^{2j_{k}}L_{1/2^{j_{k}+1}}[u_{k}]\mathrm{X}\big).
\end{equation*}
\end{enumerate}
Moreover,
\begin{equation*}
  \Bigg\|\frac{1}{2^{(p+2)j_{k}}} \frac{1}{(L_{1/2^{j_{k}+1}}[u_{k}])^{1+p-\mu}}
  \lambda_{0}\!\left(\tfrac{1}{2^{j_{k}}}x, \alpha_{k}t\right) (v_{k})_{+}^{\mu}
  \Bigg\|_{L^{\infty}(Q_{1})}
  \leq \Theta^{\mu} \mathcal{M} \Big(\tfrac{1}{k}\Big)^{1+p-\mu} \to 0
  \quad \text{as } k \to \infty.
\end{equation*}

By Proposition~\ref{Se2:Pro1}, up to a subsequence, $\{v_{k}\}_{k\geq1}$ converges locally uniformly in $Q_{1}^{-}$ to a continuous function $v_{0}$ in the $C^{1}$ topology.
By the stability of viscosity solutions (\cite[Theorem~2.10]{LLY24}), $v_{0}$ satisfies:
\begin{itemize}
\item $|Dv_{0}|^{p} \widetilde{F}_{0}(D^{2}v_{0}) - \partial_{t}v_{0} = 0$ in $Q_{8/9}^{-}$, where $\widetilde{F}_{k} \to \widetilde{F}_{0}$ and $\widetilde{F}_{0}$ remains uniformly elliptic;

\item $L_{1/2}[v_{0}] \geq 1$;

\item $0 \leq v_{0} \leq \Theta$ and $\partial_{t}v_{0} \geq 0$ in $Q_{8/9}^{-}$;

\item $v_{0}(0,t) = 0$ for all $t \in (-(8/9)^{\theta},0]$, since $v_{0}(0,0)=0$ and $v_{0}$ is non-decreasing in time.
\end{itemize}

If $p=0$, the result is immediate; see \cite[Lemma~3.2]{SO19} for details.
If $p>0$, we claim that $\partial_{t}v_{0} \equiv 0$ in $Q_{8/9}^{-}$.
Indeed, if this claim holds, then by the cutting lemma (\cite[Lemma~6]{CL13}), we deduce $\widetilde{F}_{0}(D^{2}v_{0}) = 0$ in $Q_{8/9}^{-}$.
The strong minimum principle (\cite[Theorem~2.1]{DaLio04}) then yields $v_{0} \equiv 0$, contradicting $\displaystyle \sup_{B_{1/2}} v_{0}(x,0) \geq 1$. 
Indeed, since $\partial_t u \ge 0$, the supremum of $v$ on the zero time level
is bounded from below by the supremum over the corresponding negative time levels.
Consequently,
\(\displaystyle\sup_{B_{1/2}} v_0(x,0) \ge L_{1/2}[v_0] \ge 1.
\)

We now prove the claim.
Take $(x,t), (x,s) \in Q_{1/2}^{-}$.
By Proposition~\ref{Se2:Prop2}, we have
\begin{align*}
|v_{k}(x,t) - v_{k}(x,s)|
&= \frac{|u_{k}(\frac{1}{2^{j_{k}}}x, \alpha_{k}t) - u_{k}(\frac{1}{2^{j_{k}}}x, \alpha_{k}s)|}{L_{1/2^{j_{k}+1}}[u_{k}]} \\
&\leq \Theta \frac{|u_{k}(\frac{1}{2^{j_{k}}}x, \alpha_{k}t) - u_{k}(\frac{1}{2^{j_{k}}}x, \alpha_{k}s)|}{L_{1/2^{j_{k}}}[u_{k}]} \\
&\leq \Theta \frac{\|u_{k}\|_{L^{\infty}(Q_{2^{-(1+j_{k})}}^{-})}}{L_{1/2^{j_{k}}}[u_{k}]}
\left[\frac{\alpha_{k}|t-s|}{\mathrm{dist}_{p}(Q_{2^{-(1+j_{k})}}^{-}, \partial Q_{2^{-j_{k}}}^{-})}\right]^{\frac{1}{2+p}} \\
&\leq \Theta \left[\alpha_{k} |t-s| 2^{\theta j_{k}}\right]^{\frac{1}{2+p}}
= \Theta \left[\frac{1}{2^{(p+2)j_{k}}} \frac{1}{(L_{1/2^{j_{k}+1}}[u_{k}])^{p}} |t-s| 2^{\theta j_{k}}\right]^{\frac{1}{2+p}} \\
&\leq \Theta \left[\frac{1}{2^{(p+2)j_{k}}} 2^{\frac{p(2+p)j_{k}}{1+p-\mu}} 2^{\theta j_{k}} \frac{1}{k^{p}} |t-s|\right]^{\frac{1}{2+p}}
\quad \big(\text{recall } \theta := \tfrac{(2+p)(1-\mu)}{1+p-\mu}\big) \\
&= \Theta \big[|t-s| k^{-p}\big]^{\frac{1}{2+p}} \to 0  \quad \text{as }  k \to \infty.
\end{align*}
Hence, $v_{0}$ is independent of the time variable, completing the proof of the claim.
\end{proof}

\begin{remark}
Lemma~\ref{Se3:lem1} asserts the existence of a constant $0 < \delta_{0} < 1$ (sufficiently small) such that if $u \in \mathbb{J}(F, \lambda_{0}, \mu)(Q_{1})$ satisfies
\begin{equation*}
  \big\| |Du|^{p}F(D^{2}u) - u_{t} \big\|_{L^{\infty}(Q_{1})}
  = \big\| \lambda_{0} u^{\mu}\chi_{\{u>0\}} \big\|_{L^{\infty}(Q_{1})}
  \leq \delta_{0},
\end{equation*}
then
\begin{equation*}
   L_{1/2^{j+1}}[u] \leq \mathrm{C}_{0} \bigg( \frac{1}{2^{j}} \bigg)^{\!\frac{2+p}{1+p-\mu}},
\end{equation*}
for every $ j \in \mathbb{V}_{p,\mu}[u] $.
\end{remark}

The next result provides an enhanced regularity estimate for the class $\mathbb{J}(F, \lambda_{0}, \mu)(Q_{1})$.
Although the proof is inspired by \cite[Theorem~2.2]{Sh03} (see also \cite[Theorem~3.3]{SOS18} and \cite[Theorem]{SO19}),
it requires the careful construction of a suitable barrier function.
\begin{Theorem}\label{Sec3:thm1}
  There exists a positive constant $ \mathrm{C} $ such that for all $ u \in \mathbb{J}(F, \lambda_{0}, \mu)(Q_{1}) $
  \begin{equation*}
    u(x,t) \leq \mathrm{C} \cdot \Gamma (x,t)^{\frac{2+p}{1+p-\mu}} \ \ \text{for all} \ \ (x,t) \in Q_{1/2},
  \end{equation*}
where
 \begin{align*}
 \Gamma(x,t):= \left\{
     \begin{aligned}
      & \sup \{r\geq 0: Q_{r}(x,t) \subset \{u>0\}\}, && \text{for} \ \ (x,t) \in \{u>0\},        \\
     & 0,      &&       \text{otherwise}.  \\
     \end{aligned}
     \right.
\end{align*}
\end{Theorem}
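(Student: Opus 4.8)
The plan is to upgrade the dyadic decay estimate of Lemma~\ref{Se3:lem1}, which is only valid for indices $j \in \mathbb{V}_{p,\mu}[u]$, into a full pointwise bound controlled by the geometric quantity $\Gamma(x,t)$. The mechanism is a standard dichotomy argument in the spirit of \cite[Theorem~2.2]{Sh03}: fix $(x,t) \in Q_{1/2}$ and let $\rho := \Gamma(x,t)$. If $\rho = 0$, the free boundary reaches $(x,t)$ and we are reduced to estimating $u$ near free boundary points directly via Lemma~\ref{Se3:lem1}; if $\rho > 0$, then $Q_\rho(x,t) \subset \{u>0\}$ but $Q_{\rho'}(x,t) \not\subset \{u>0\}$ for $\rho' $ slightly larger, so there is a free boundary point $(y,s)$ at intrinsic distance comparable to $\rho$ from $(x,t)$. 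In the latter case I would transfer the estimate from $(y,s)$ to $(x,t)$: since $u>0$ on $Q_\rho(x,t)$, inside this cylinder $u$ solves the homogeneous-type equation $|Du|^p F(D^2u) - u_t = \lambda_0 u^\mu$ with $u^\mu \le (\sup_{Q_\rho(x,t)} u)^\mu$, and one bounds $\sup_{Q_\rho(x,t)} u$ in terms of $\sup_{Q_{2\rho}^-(y,s)} u$ using non-degeneracy (Theorem~\ref{Thm2}, via the set $\mathbb{V}_{p,\mu}$) together with the intrinsic scaling.

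\textbf{Key steps in order.} First I would reduce to the normalized free-boundary decay: for $(y,s) \in \partial\{u>0\} \cap Q_{3/4}$, establish
\[
  \sup_{Q_r^-(y,s)} u \le \mathrm{C}_0 \, r^{\frac{2+p}{1+p-\mu}}, \qquad 0 < r < 1/4,
\]
by decomposing $r \in (2^{-(j+1)}, 2^{-j}]$ and iterating the inclusion $\mathbb{V}_{p,\mu}[u]$ — precisely, if $j \in \mathbb{V}_{p,\mu}[u]$ one applies Lemma~\ref{Se3:lem1} directly, while if $j \notin \mathbb{V}_{p,\mu}[u]$ one uses the defining inequality $L_{1/2^j}[u] > 2^{\frac{2+p}{1+p-\mu}}\max\{1, 1/\mathrm{C}_0^*\} L_{1/2^{j+1}}[u]$ to run downward until the first index in $\mathbb{V}_{p,\mu}[u]$ is reached (this index exists because $0 \in \mathbb{V}_{p,\mu}[u]$), picking up only a convergent geometric factor. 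Second, I would build the barrier: following the remark after Theorem~\ref{Thm1}, set
\[
  \Phi(x,t) = \mathcal{C}\Bigl(\mathcal{A}|x-y|^{\frac{2+p}{1+p}} + \mathcal{B}(t-s)^{\frac{1+p-\mu}{(1+p)(1-\mu)}}\Bigr)^{\frac{1+p}{1+p-\mu}}
\]
and check, by direct differentiation, that for suitable universal $\mathcal{A}, \mathcal{B}, \mathcal{C}$ (depending on $n,\lambda,\Lambda,\mathcal{M}_2,p,\mu$) it is a viscosity supersolution of $|D\Phi|^p F(D^2\Phi) - \Phi_t \le \mathcal{M}_2 \Phi^\mu$ — here the condition $0 < \mu < 1$ is exactly what keeps the $t$-exponent positive so $\Phi$ does not blow up as $t \to s^+$. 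Third, I would apply the comparison principle (Lemma~\ref{Se2:lem1}) to $u$ and a suitably scaled translate of $\Phi$ on a cylinder $Q_\rho(x,t)$, using on the parabolic boundary the bound from step one at the nearby free boundary point $(y,s)$; this yields $u(x,t) \le \mathrm{C}\rho^{\frac{2+p}{1+p-\mu}} = \mathrm{C}\,\Gamma(x,t)^{\frac{2+p}{1+p-\mu}}$. Finally I would assemble the two regimes $\rho = 0$ and $\rho > 0$ into the single stated estimate, absorbing $\|u\|_{L^\infty}$ ($=1$ here since $u \in \mathbb{J}$) into $\mathrm{C}$.

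\textbf{Main obstacle.} The delicate point is the barrier construction and the comparison step: one must verify that the separation-of-variables ansatz for $\Phi$ actually produces a supersolution of the degenerate operator $|D\Phi|^p F(D^2\Phi)$ — this requires computing $D\Phi$, $D^2\Phi$ for a function that is radial in $x$ but coupled nonlinearly with $t$, controlling $F(D^2\Phi)$ through the Pucci bounds $\mathscr{P}^\pm$, and then balancing the resulting spatial and temporal powers against $\Phi^\mu$ with a single choice of constants valid on the whole cylinder including the degenerate corner where $D\Phi$ vanishes (i.e., at $x=y$). A secondary subtlety is that Lemma~\ref{Se3:lem1} only controls $L_{1/2^{j+1}}[u]$ over \emph{backward} cylinders $Q^-$, whereas $\Gamma$ is defined via full cylinders $Q_r$; the monotonicity hypothesis $\partial_t u \ge -\mathrm{c}_0 u^\mu$ (assumption~(III)) must be invoked to pass from backward to full-cylinder control, and care is needed to ensure the comparison is carried out on $Q^-$ or that the forward-in-time part of the cylinder is handled compatibly with the barrier.
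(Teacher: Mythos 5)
Your proposal follows essentially the same route as the paper: the dyadic decay via the dichotomy on $\mathbb{V}_{p,\mu}[u]$ (applying Lemma~\ref{Se3:lem1} when $j\in\mathbb{V}_{p,\mu}[u]$ and the defining inequality of the complement otherwise, which is precisely the paper's induction), then the separation-of-variables barrier $\Phi$ combined with the comparison principle (Lemma~\ref{Se2:lem1}) to extend from the backward cylinders to the full cylinder, and finally the transfer to $\Gamma(x,t)$ through a nearby free boundary point. The one substantive slip worth fixing is that the supersolution inequality for $\Phi$ must invoke the lower bound $\mathcal{M}_1$ on $\lambda_0$ — you need $|D\Phi|^{p}F(D^{2}\Phi)-\Phi_{t}\leq \lambda_0\Phi^{\mu}$ with $\lambda_0\geq\mathcal{M}_1$ — not $\mathcal{M}_2$ as you wrote; the paper's choice of $\mathcal{C}$ is indeed built from the ratio $\mathcal{M}_1/\Lambda$.
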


\begin{proof}
The argument proceeds by induction. We first claim that
\begin{equation}\label{Se3:eq6}
   L_{1/2^{j+1}}[u] \leq \mathrm{C}_{0} \bigg( \frac{1}{2^{j}} \bigg)^{\!\frac{2+p}{1+p-\mu}}
   \quad \text{for all} \quad j \in \mathbb{N},
\end{equation}
where $\mathrm{C}_{0}$ is the constant given by Lemma~\ref{Se3:lem1}.
Without loss of generality, we assume $\mathrm{C}_{0} \geq 1$. The estimate in \eqref{Se3:eq6} clearly holds for $j = 0$.
Assume it holds for some $j \in \mathbb{N}$, and consider the $(j+1)$-st step of induction.
We distinguish two cases:

\smallskip
\noindent
\textbf{Case 1.} If $j \in \mathbb{V}_{p,\mu}[u]$, the claim follows directly from Lemma~\ref{Se3:lem1}.

\smallskip
\noindent
\textbf{Case 2.} If $j \notin \mathbb{V}_{p,\mu}[u]$, using the inductive hypothesis we obtain
\begin{equation*}
  L_{1/2^{j+1}}[u]
  \leq \bigg( \frac{1}{2} \bigg)^{\!\frac{2+p}{1+p-\mu}} L_{1/2^{j}}[u]
  \leq \mathrm{C}_{0} \bigg( \frac{1}{2^{j}} \bigg)^{\!\frac{2+p}{1+p-\mu}}.
\end{equation*}
Hence, \eqref{Se3:eq6} holds for all $j \in \mathbb{N}$.

\smallskip
Next, let $r \in (0,1)$ and choose $j \in \mathbb{N}$ as the largest integer satisfying
$2^{-(j+1)} \leq r < 2^{-j}$. Then
\begin{equation*}
  L_{r}[u] \leq L_{1/2^{j}}[u]
  \leq \mathrm{C}_{0} \bigg( \frac{1}{2^{j-1}} \bigg)^{\!\frac{2+p}{1+p-\mu}}
  = 4^{\frac{2+p}{1+p-\mu}} \mathrm{C}_{0} \, r^{\frac{2+p}{1+p-\mu}}
  =: \mathrm{C}_{1}(n,\lambda,\Lambda,\mu,\mathcal{M}) \, r^{\frac{2+p}{1+p-\mu}}.
\end{equation*}

\smallskip
To obtain the estimate of $u$ in $Q_{1/2}$, we construct the barrier function
\begin{equation*}
  \Phi(x,t)
  = \mathcal{C} \bigg(\mathcal{A}|x|^{\frac{2+p}{1+p}}
  + \mathcal{B} t^{\frac{1+p-\mu}{(1+p)(1-\mu)}}\bigg)^{\!\frac{1+p}{1+p-\mu}},
\end{equation*}
where $\mathcal{C}$ is a constant defined by the form
\begin{equation*}
  \mathcal{C}
  := \bigg[ \bigg(\frac{1+p-\mu}{\mathcal{A}(2+p)}\bigg)^{\!1+p}
  \frac{\mathcal{M}_{1}}{\Lambda}
  \frac{1}{n-2+\frac{2+p}{1+p-\mu}} \bigg]^{\!\frac{1}{1+p-\mu}}.
\end{equation*}
Here, $\mathcal{A}$ and $\mathcal{B}$ are constants that will be determined later.
A straightforward computation yields
\begin{equation*}
  |D\Phi|^{p} F(D^{2}\Phi) - \Phi_{t}
  - \lambda_{0}(x,t)\Phi_{+}^{\mu} \leq 0
  = |Du|^{p} F(D^{2}u) - u_{t}
  - \lambda_{0}(x,t)u_{+}^{\mu}
\end{equation*}
in $Q_{1}^{+}$ for all $\mathcal{A}, \mathcal{B} > 0$.
Taking $\mathcal{B} > \mathcal{A}$ and choosing $\mathcal{A}$ sufficiently large ensures that
$\Phi \geq u$ on $\partial_{\mathrm{par}} Q_{1}^{+}$, where the estimate
$L_{r}[u] \leq \mathrm{C}_{1}(n,\lambda,\Lambda,\mu,\mathcal{M}_{2})r^{\frac{2+p}{1+p-\mu}}$ is used.
By the comparison principle (Lemma~\ref{Se2:lem1}), we conclude that $\Phi \geq u$ in $Q_{1}^{+}$.
Therefore,
\begin{equation*}
  \sup_{Q_{r}} u(x,t)
  \leq \mathrm{C}(n,p,\Lambda,\mathcal{M}_{1}) \, r^{\frac{2+p}{1+p-\mu}}.
\end{equation*}
\end{proof}

We now turn to the proof of Theorem \ref{Thm1}.
\begin{proof}[{\bf Proof of Theorem \ref{Thm1}}]
We assume that $ K = \overline{Q_{1}} \subset Q_{T} $. For $ (x,t) \in \{u>0\} \cap K $, we let $ \Gamma(x,t) $ denote the distance introduced in Theorem \ref{Sec3:thm1}. For any given $ (x_{0}, t_{0}) \in \partial \{u>0\} \cap K $, we define
\begin{equation*}
  v(x,t):= \frac{u(x_{0}+\mathcal{R}_{0}x,\, t_{0}+\mathcal{R}_{0}^{p+2}\mathcal{K}_{0}^{-p}t)}{\mathcal{K}_{0}}
  \quad \text{for} \quad (x,t) \in Q_{1},
\end{equation*}
where $ \mathcal{R}_{0}, \mathcal{K}_{0} > 0 $ are constants to be determined.

A straightforward computation shows that $ v $ satisfies, in the viscosity sense,
\begin{equation}\label{Se3:eq11}
  |Dv|^{p}\, \widetilde{F}(D^{2}v) - v_{t} = \widetilde{\lambda}_{0}(x,t)\, v_{+}^{\mu}(x,t),
\end{equation}
where
\begin{equation*}
  \widetilde{F}(D^{2}v):= \frac{\mathcal{R}_{0}^{2}}{\mathcal{K}_{0}}
  F\!\left(\frac{\mathcal{K}_{0}}{\mathcal{R}_{0}^{2}}D^{2}v\right),
  \qquad  \text{and} \qquad
  \widetilde{\lambda}_{0}(x,t):=
  \frac{\mathcal{R}_{0}^{2+p}}{\mathcal{K}_{0}^{1+p-\mu}}
  \lambda_{0}(x_{0}+\mathcal{R}_{0}x,\, t_{0}+\mathcal{R}_{0}^{p+2}\mathcal{K}_{0}^{-p}t).
\end{equation*}

We choose
\begin{equation*}
  \mathcal{K}_{0}:= \|u\|_{L^{\infty}(Q_{T})},
  \qquad \text{and} \qquad
  \mathcal{R}_{0}:= \min \bigg \{1,\, \frac{\mathrm{dist}(K,\partial_{p}Q_{T})}{2},\,
  \Big( \frac{\delta_{0}\mathcal{K}_{0}^{1+p-\mu}}{\mathcal{M}_{2}} \Big)^{\!\frac{1}{2+p}} \bigg \}.
\end{equation*}
Then, by Theorem \ref{Sec3:thm1}, we have
\begin{equation*}
  v(x,t) \leq \mathrm{C}\, \Gamma(x,t)^{\frac{2+p}{1+p-\mu}}.
\end{equation*}
Rescaling back to $ u $, we obtain
\begin{equation*}
  u(x,t) \leq \mathrm{C}\, \|u\|_{L^{\infty}(Q_{T})}\,
  \mathrm{dist}_{p}((x,t), (x_{0}, t_{0}))^{\frac{2+p}{1+p-\mu}}.
\end{equation*}
Thus, the proof is complete.
\end{proof}

We now present the proof of Theorem \ref{Thm2}.

\begin{proof}[{\bf Proof of Theorem \ref{Thm2}}]
By the continuity of viscosity solutions, it suffices to prove the result for points
$ (x_{0}, t_{0}) \in \{u>0\} \cap Q_{T}$. We consider the comparison function
\begin{equation*}
  \widetilde{\Phi}(x,t):=
  \mathcal{C}_{1} \!\left(
  |x-x_{0}|^{\frac{2+p}{1+p}}+
  \mathcal{C}_{2}(t_{0}-t)^{\frac{1+p-\mu}{(1+p)(1-\mu)}}
  \right)^{\!\frac{1+p}{1+p-\mu}},
  \quad (x,t) \in Q_{r}^{-}(x_{0}, t_{0}),
\end{equation*}
where $ \mathcal{C}_{1}, \mathcal{C}_{2} > 0 $ satisfy
\begin{equation*}
  \mathcal{C}_{2} \leq
  \left(\frac{\mathcal{M}_{1}\mathcal{C}_{1}^{\mu-1}(1-\mu)}{2}\right)^{\!\frac{1+p-\mu}{(1+p)(1-\mu)}},
  \qquad
  \mathcal{C}_{1}:=
  \left(\frac{\mathcal{M}_{1}(1+p-\mu)^{2+p}}
  {2\Lambda(2+p)^{1+p}\{(n-2)(1+p-\mu)+2+p\}}\right)^{\!\frac{1}{1+p-\mu}}.
\end{equation*}
Since $ 0 < \mathcal{M}_{1} \leq \lambda_{0}(x,t) $, we deduce that
\begin{equation*}
  |D\widetilde{\Phi}|^{p} F(D^{2}\widetilde{\Phi})
  - \widetilde{\Phi}_{t}
  - \lambda_{0}(x,t)\widetilde{\Phi}_{+}^{\mu} \leq 0
  \quad \text{in } \quad Q_{r}^{-}(x_{0}, t_{0}).
\end{equation*}

We claim that there exists a point
$ (x', t') \in \partial_{\mathrm{par}}Q_{r}^{-}(x_{0}, t_{0}) $
such that $ u(x', t') \geq  \widetilde{\Phi}(x',t') $.
Indeed, if $ \widetilde{\Phi} \geq  u $ on
$ \partial_{\mathrm{par}} Q_{r}^{-}(x_{0}, t_{0}) $,
then by the comparison principle (Lemma \ref{Se2:lem1}) we would have
$ u \leq \widetilde{\Phi} $ in $ Q_{r}^{-}(x_{0}, t_{0}) $,
which contradicts $ \widetilde{\Phi}(x_{0}, t_{0})  = 0 < u(x_{0}, t_{0}) $.
Hence, the claim follows.
Since $ \widetilde{\Phi}(x',t') = \mathcal{C}_{3} r^{\frac{2+p}{1+p-\mu}} $ for some universal constant $ \mathcal{C}_{3} > 0 $, the desired estimate is obtained.
\end{proof}

\vspace{2mm}

\section{Finer control for solutions and gradient decay}\label{Section 4}

In this section, we shall establish Corollary~\ref{Sec1:coro1} and   Theorem~\ref{Thm3} based on Theorems~\ref{Thm1} and ~\ref{Thm2}.

\begin{proof}[{\bf Proof of  Corollary~\ref{Sec1:coro1}}]
The upper estimate follows directly from Theorem~\ref{Thm1}. It remains to prove the lower bound. We argue by contradiction and assume that such a constant $ \mathrm{C}_{*} $ does not exist. Then there exists a sequence $ (x_{k}, t_{k}) \in \{u>0\} \cap Q' $ such that
\begin{equation}\label{Se4:eq6}
  u(x_{k}, t_{k}) \leq k^{-1} d_{k}^{\frac{2+p}{1+p-\mu}},
\end{equation}
where
\[
d_{k}:= \mathrm{dist}_{p}((x_{k}, t_{k}), \partial \{u>0\} \cap Q') \to 0
\quad \text{as } \quad  k \to \infty.
\]

We define the auxiliary function $ v_{k}: Q_{1} \to \mathbb{R} $ by
\begin{equation*}
  v_{k}(x,t):= \frac{u(x_{k}+d_{k}x,\, t_{k}+ d_{k}^{\theta}t)}{d_{k}^{\frac{2+p}{1+p-\mu}}}
  \quad \text{in } \quad  Q_{1}.
\end{equation*}
A straightforward computation shows that:
\begin{enumerate}[i)]
  \item In the viscosity sense,
  \[
  |Dv_{k}|^{p} \widehat{F}_{k}(D^{2}v_{k}) - \partial_{t}v_{k}
  = \widehat{\lambda}_{0,k}(x,t)\, (v_{k})_{+}^{\mu}(x,t)
  \quad \text{in } \quad Q_{1}^{-},
  \]
  where
  \[
  \widehat{F}_{k}(\mathrm{X})
  := d_{k}^{\frac{p-\mu}{1+p-\mu}}
  F\big(d_{k}^{\frac{\mu-p}{1+p-\mu}}\mathrm{X}\big),
  \quad \text{and} \quad
  \widehat{\lambda}_{0,k}(x,t)
  := \lambda_{0}(x_{k}+d_{k}x,\, t_{k}+ d_{k}^{\theta}t).
  \]

  \item From Theorem~\ref{Thm1}, we deduce that
  \[
  u(x_{k}+d_{k}x,\, t_{k}+ d_{k}^{\theta}t)
  \leq \sup_{Q_{d_{k}}^{+}(\widehat{x}_{k}, \widehat{t}_{k})} u(x,t)
  \leq \mathrm{C}\, d_{k}^{\frac{2+p}{1+p-\mu}},
  \]
  where $ (\widehat{x}_{k}, \widehat{t}_{k}) \in \partial \{u>0\} \cap Q_{1}^{-} $ satisfies
  $ d_{k}= \mathrm{dist}_{p}((x_{k}, t_{k}), (\widehat{x}_{k}, \widehat{t}_{k})) $.
  Hence, $ v_{k} $ is nonnegative and uniformly bounded.

  \item \label{ccc} By the interior H\"{o}lder regularity of solutions (see   Proposition~\ref{Se2:Prop2}) and~\eqref{Se4:eq6}, we obtain
  \begin{equation*}
    v_{k}(x,t) \leq \mathrm{C}\, d_{k}^{\alpha} + \frac{1}{k}
    \quad \text{in } \quad Q_{1}^{-}.
  \end{equation*}
\end{enumerate}

Combining Theorem~\ref{Thm2} with \ref{ccc}), we find that
\begin{equation*}
   0 < \mathrm{C}_{0}^{*}
   \leq \sup_{\partial_{\mathrm{par}}Q_{1}^{-}} v_{k}(x,t)
   \leq \sup_{\overline{Q}_{1}^{-}} v_{k}(x,t)
   \leq \mathrm{C}\, d_{k}^{\alpha} + \frac{1}{k}
   \to 0
   \quad \text{as} \quad k \to \infty,
\end{equation*}
which yields a contradiction.
\end{proof}

We conclude this section with the proof of Theorem~\ref{Thm3}.

\begin{proof}[{\bf Proof of Theorem~\ref{Thm3}}]

We fix $(z,s) \in \{u>0\} \cap Q_{1/2}$ and set
$r := \mathrm{dist}_{p}\big((z,s),\partial\{u>0\}\big)$. We choose
$(x_{0}, t_{0}) \in \partial\{u>0\}$ satisfying
\begin{equation*}
r = |z-x_{0}| + |s-t_{0}|^{1/\theta}.
\end{equation*}
To ensure that $Q_{r}(z,s) \subset Q_{2r}(x_{0}, t_{0})$, it suffices to observe that
\begin{equation*}
s-t_{0} \le r^{\theta} \le (2^\theta -1) r^\theta
\quad\Longleftrightarrow\quad
 \theta  \geq 1
\quad\Longleftrightarrow\quad
0 < \mu \le \frac{1}{1+p}.
\end{equation*}

By Theorem~\ref{Thm1}, we obtain
\begin{equation}\label{Se4:eq10}
\sup_{Q_{r}(z,s)} u(x,t)
\le \sup_{Q_{2r}(x_{0}, t_{0})} u(x,t)
\le \mathrm{C} r^{\frac{2+p}{1+p-\mu}}.
\end{equation}

We define the rescaled function
\begin{equation*}
\omega(x,t) := \frac{u(z+rx,\, s+r^{\theta} t)}{r^{\frac{2+p}{1+p-\mu}}},
\end{equation*}
which satisfies, in the viscosity sense,
\begin{equation*}
|D\omega|^{p}\,\widetilde{F}(D^{2}\omega) - \omega_{t}
= \widetilde{\lambda}_{0}(x,t)\,\omega^{\mu}\chi_{\{\omega>0\}}
\qquad \text{in } \quad Q_{1},
\end{equation*}
where
\begin{equation*}
\widetilde{\lambda}_{0}(x,t)
:= \lambda_{0}(z+rx,\, s+r^{\theta}t),
\qquad \text{and} \qquad
\widetilde{F}(\mathrm{X})
:= r^{\frac{p-2\mu}{1+p-\mu}}
F\!\left(r^{\frac{2\mu-p}{1+p-\mu}} \mathrm{X}\right).
\end{equation*}
It is straightforward to verify that $\widetilde{F}$ still satisfies
\hyperref[F1]{\textbf{(F1)}}, \hyperref[F2]{\textbf{(F2)}}, and \hyperref[F3]{\textbf{(F3)}}.
Moreover, \eqref{Se4:eq10} yields
\begin{equation*}
\sup_{Q_{1}} \omega \le \mathrm{C}.
\end{equation*}

Applying the gradient estimate for bounded solutions from Proposition~\ref{Se2:Pro1}, we find
\begin{equation*}
|Du(z,s)|
= r^{\frac{1+\mu}{1+p-\mu}}\, |D\omega(0,0)|
\le \mathrm{C}\, r^{\frac{1+\mu}{1+p-\mu}},
\end{equation*}
which completes the proof of the claimed gradient decay.
\end{proof}

\vspace{2mm}

\section{Global analysis results}\label{Section 5}

Throughout this section, we will enjoy the blow-up analysis over free boundary points, and prove a Liouville-type result for global dead-core solutions.

\subsection{Blow-up analysis}

In this subsection, we investigate the blow-up behaviour near free boundary points.
Let $u$ be a solution to \eqref{DCP}, and let $(x_{0}, t_{0}) \in \partial \{u>0\} \cap Q_{1/2}$.
For each $\epsilon \searrow 0$, we define the blow-up sequence $u_{\epsilon}: Q_{1/2\epsilon} \rightarrow \mathbb{R}$ by
\begin{equation*}
  u_{\epsilon}(x,t):= \frac{u(x_{0}+\epsilon x, t_{0}+\epsilon^{\theta}t)}{\epsilon^{\frac{2+p}{1+p-\mu}}}.
\end{equation*}

We now state the main result of this subsection.

\begin{Theorem}[{\bf Blow-up limit}]\label{Thm5.1-B-L}
Let $(x_{0}, t_{0}) \in \partial \{u>0\} \cap Q_{1/2}$ be a free boundary point, and let $u$ be a viscosity solution to \eqref{DCP}.
Then, up to a subsequence,
\begin{equation*}
  u_{\epsilon} \rightarrow u_{0} \quad \text{and} \quad Du_{\epsilon} \rightarrow Du_{0}
  \quad \text{uniformly on every compact set} \ K \Subset \mathbb{R}^{n} \times \mathbb{R}.
\end{equation*}
Moreover, the blow-up limit $u_{0}$ satisfies $u_{0}(0,0)=0$ and
\begin{equation*}
  |Du_{0}|^{p} F(D^{2}u_{0}) - \partial_{t} u_{0} = \lambda_{0}(x_{0}, t_{0}) (u_{0})^{\mu}_{+}(x,t)
  \quad \text{in} \quad \mathbb{R}^{n} \times \mathbb{R}
\end{equation*}
in the viscosity sense.
In addition, there exist constants $\mathrm{c}_{1}, \mathrm{c}_{2} > 0$ such that
\begin{equation*}
  \mathrm{c}_{1} \leq
  \liminf_{|x|+|t|^{\frac{1}{\theta}}\rightarrow \infty}
  \frac{u_{0}(x,t)}{(|x|+|t|^{\frac{1}{\theta}})^{\frac{2+p}{1+p-\mu}}}
  \leq
  \limsup_{|x|+|t|^{\frac{1}{\theta}}\rightarrow \infty}
  \frac{u_{0}(x,t)}{(|x|+|t|^{\frac{1}{\theta}})^{\frac{2+p}{1+p-\mu}}}
  \leq  \mathrm{c}_{2}.
\end{equation*}
\end{Theorem}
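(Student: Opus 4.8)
\textbf{Proof proposal for Theorem \ref{Thm5.1-B-L}.}

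The plan is to establish the three assertions---convergence of the blow-up family, the PDE satisfied by the limit, and the two-sided growth control---in that order, each leaning on a result already available in the excerpt. First I would fix a compact set $K \subset \mathbb{R}^{n} \times \mathbb{R}$ and observe that, for $\epsilon$ small, $K$ is contained in the domain of definition of $u_{\epsilon}$. A direct computation shows $u_{\epsilon}$ solves
\[
|Du_{\epsilon}|^{p} F_{\epsilon}(D^{2}u_{\epsilon}) - \partial_{t}u_{\epsilon}
= \lambda_{0}(x_{0}+\epsilon x, t_{0}+\epsilon^{\theta} t)\,(u_{\epsilon})_{+}^{\mu}
\quad \text{in } Q_{1/2\epsilon},
\]
where $F_{\epsilon}(\mathrm{X}) := \epsilon^{\frac{p-2\mu}{1+p-\mu}} F\big(\epsilon^{\frac{2\mu-p}{1+p-\mu}}\mathrm{X}\big)$ still obeys \hyperref[F1]{\textbf{(F1)}}--\hyperref[F3]{\textbf{(F3)}} with the same ellipticity constants (the rescaling is ellipticity-preserving, as already noted in the proof of Theorem~\ref{Thm3}). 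The key uniform bound comes from Theorem~\ref{Thm1}: since $(x_{0},t_{0})$ is a free boundary point, $\sup_{Q_{R}(x_{0},t_{0})} u \leq \mathrm{C}_{0}\|u\|_{L^\infty} R^{\frac{2+p}{1+p-\mu}}$ for $R$ small, which rescales to $\sup_{K} u_{\epsilon} \leq \mathrm{C}(K)$ uniformly in $\epsilon$. The right-hand sides are therefore uniformly bounded in $L^{\infty}(K)$, so Proposition~\ref{Se2:Pro1} (applied after an intrinsic rescaling, or Proposition~\ref{Se2:Prop2}) yields uniform interior $C^{1,\alpha}$-in-space and $C^{\frac{1+\alpha}{2-\alpha p}}$-in-time estimates for $u_{\epsilon}$. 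By Arzelà--Ascoli and a diagonal argument over an exhaustion of $\mathbb{R}^{n}\times\mathbb{R}$, a subsequence converges with $u_{\epsilon}\to u_{0}$ and $Du_{\epsilon}\to Du_{0}$ locally uniformly.

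Next I would pass to the limit in the equation. Since $\lambda_{0}(x_{0}+\epsilon x, t_{0}+\epsilon^{\theta}t) \to \lambda_{0}(x_{0},t_{0})$ locally uniformly (by continuity of $\lambda_{0}$) and $F_{\epsilon} \to F$ locally uniformly on $\mathrm{Sym}(n)$ (because $\epsilon^{\frac{2\mu-p}{1+p-\mu}} \to 1$, recalling $p<2\mu$ would force care, but in general the scaling exponent $\tfrac{2\mu-p}{1+p-\mu}$ need not vanish---here one should instead invoke the cutting-type observation and the $C^1$-convergence to handle the degenerate term, exactly as in the passage to $\widetilde F_0$ in Lemma~\ref{Se3:lem1}), the stability of viscosity solutions (\cite[Theorem~2.10]{LLY24}) gives that $u_{0}$ is a viscosity solution of
\[
|Du_{0}|^{p} F(D^{2}u_{0}) - \partial_{t}u_{0} = \lambda_{0}(x_{0},t_{0})\,(u_{0})_{+}^{\mu}
\quad \text{in } \mathbb{R}^{n}\times\mathbb{R}.
\]
The normalization $u_{0}(0,0)=0$ is inherited from $u_{\epsilon}(0,0) = u(x_{0},t_{0})/\epsilon^{\cdots}=0$ under uniform convergence; nonnegativity passes to the limit trivially.

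For the two-sided growth, the upper bound is the rescaled form of Theorem~\ref{Thm1}: for any $(x,t)$ with $\rho := |x|+|t|^{1/\theta}$ large, choosing $\epsilon$ along the subsequence so that $\epsilon\rho$ stays in the valid range and passing to the limit gives $u_{0}(x,t) \leq \mathrm{C}\,\rho^{\frac{2+p}{1+p-\mu}}$, hence $\limsup \leq \mathrm{c}_{2}$. The lower bound uses Theorem~\ref{Thm2} (non-degeneracy): since $(x_{0},t_{0}) \in \overline{\{u>0\}}$, for each $r$ one has $\sup_{\partial_{par}Q_{r}^{-}(x_{0},t_{0})} u \geq \mathrm{C}_{0}^{*} r^{\frac{2+p}{1+p-\mu}}$, which rescales to $\sup_{\partial_{par}Q_{1}^{-}} u_{\epsilon} \geq \mathrm{C}_{0}^{*}$ for every $\epsilon$; the same inequality holds on every $\partial_{par}Q_{R}^{-}$ (non-degeneracy at scale $R\epsilon$), giving $\sup_{\partial_{par}Q_{R}^{-}(0,0)} u_{0} \geq \mathrm{C}_{0}^{*} R^{\frac{2+p}{1+p-\mu}}$ for all $R>0$, which forces $\liminf \geq \mathrm{c}_{1}>0$. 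I expect the main obstacle to be the passage to the limit in the degenerate operator term $|Du_{\epsilon}|^{p}F_{\epsilon}(D^{2}u_{\epsilon})$: when $Du_{0}$ vanishes the equation is only understood in the restricted viscosity sense of Definition~\ref{Pre:def1}, so one must check admissibility of test functions carefully and use the $C^{1}$-convergence (not merely $C^{0}$) together with the stability theorem to ensure the limiting inequalities hold at points where $Du_{0}=0$; this is precisely the delicate point that was handled via the cutting lemma and Hölder-in-time reduction in Lemma~\ref{Se3:lem1}, and the same machinery applies here.
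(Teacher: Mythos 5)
Your proposal follows essentially the same route as the paper: uniform local bounds from Theorem~\ref{Thm1}, compactness via Proposition~\ref{Se2:Pro1}, stability of viscosity solutions (\cite[Theorem~2.10]{LLY24}) to pass to the limit in the equation, and Theorems~\ref{Thm1}--\ref{Thm2} for the two-sided growth control. One small point worth tightening: you correctly sense a problem with claiming $F_{\epsilon}\to F$ (the exponent $\tfrac{2\mu-p}{1+p-\mu}$ does not vanish in general, so $\epsilon^{\frac{2\mu-p}{1+p-\mu}}\not\to 1$ unless $p=2\mu$), but the fix is not the cutting lemma---that device removes the $|Du|^{p}$ factor once $\partial_{t}v_{0}\equiv 0$, and plays no role in identifying the limit operator. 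What actually happens (as in Lemma~\ref{Se3:lem1} and the paper's own proof here) is that $\widetilde{F}_{\epsilon}\to\widetilde{F}_{0}$, another uniformly elliptic operator with the same constants satisfying \hyperref[F1]{\bf (F1)}--\hyperref[F3]{\bf (F3)}, generally the recession operator (for $p>2\mu$) or the linearization $DF(\mathrm{O}_n)$ (for $p<2\mu$) rather than $F$ itself; the theorem statement's use of the symbol $F$ glosses over this, and the paper's remark after the theorem (reducing to $\Delta$ under condition \eqref{Condi-WuYu} when $\mu>p/2$) acknowledges it. Your extra commentary about admissibility of test functions at points where $Du_{0}=0$ is harmless over-caution---the cited stability theorem already handles the restricted viscosity framework---so the proposal is correct modulo the misplaced appeal to the cutting lemma.
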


\begin{proof}
We observe that $u_{\epsilon}$ satisfies, in the viscosity sense,
\begin{equation*}
  |Du_{\epsilon}|^{p} \widetilde{F}_{\epsilon}(D^{2}u_{\epsilon}) - (u_{\epsilon})_{t}
  = \widetilde{\lambda}_{0}(x,t) (u_{\epsilon}^{\mu})_{+}
  \quad \text{in} \quad Q_{1/2\epsilon},
\end{equation*}
where
\begin{equation*}
  \widetilde{F}_{\epsilon}(\mathrm{X})
  := \epsilon^{\frac{p-2\mu}{1+p-\mu}} F(\epsilon^{\frac{2\mu-p}{1+p-\mu}}\mathrm{X}),
  \qquad \text{and} \qquad
  \widetilde{\lambda}_{0}(x,t) := \lambda_{0}(x_{0}+\epsilon x, t_{0}+\epsilon^{\theta}t).
\end{equation*}
From Theorem \ref{Thm1}, it follows that
\begin{equation*}
  u_{\epsilon}(x,t) \leq \mathrm{C}(n, \lambda, \Lambda, \mathcal{M}_{1}, \mathcal{M}_{2}, p, \mu)
  \quad \text{for all} \quad (x,t) \in Q_{1/2\epsilon}.
\end{equation*}
Hence, the family $\{u_{\epsilon}\}_{\epsilon>0}$ is locally bounded in $Q_{1/2\epsilon}$.
By Proposition \ref{Se2:Pro1}, up to a subsequence,
$u_{\epsilon} \to u_{0}$ and $Du_{\epsilon} \to Du_{0}$ locally uniformly in $\mathbb{R}^{n} \times \mathbb{R}$.
By stability of viscosity solutions (\cite[Theorem~2.10]{LLY24}), the limit $u_{0}$ satisfies
\begin{equation*}
  |Du_{0}|^{p} \widetilde{F}_0(D^{2}u_{0}) - (u_{0})_{t}
  = \lambda_{0}(x_{0}, t_{0})(u_{0})^{\mu}_{+}(x,t)
  \quad \text{in} \quad \mathbb{R}^{n} \times \mathbb{R}.
\end{equation*}

Finally, the lower and upper asymptotic bounds follow from Theorems \ref{Thm1} and \ref{Thm2}, respectively.
\end{proof}

\begin{remark}

Motivated by the analysis of the fully nonlinear Alt-Phillips equation, Wu and Yu \cite{WH2022} assumed, in addition to the ellipticity condition \hyperref[F1]{\bf (F1)} that the operator \( F: \mathrm{Sym}(n) \to \mathbb{R} \) satisfies the following structural conditions:
\begin{equation}\label{Condi-WuYu}\tag{{\bf SC}}
\left\{
\begin{array}{l}
F \text{ is convex,} \\
F(\mathbf{O}_n) = 0, \\
\text{the trace operator belongs to the subdifferential of } F \text{ at } \mathbf{O}_n.
\end{array}
\right.
\end{equation}

Given a matrix \( \mathfrak{A} \in \mathrm{Sym}(n) \), a \textbf{subdifferential} of \( F \) at \( \mathfrak{A} \) is a linear operator \( \mathcal{S}_{\mathfrak{A}} : \mathrm{Sym}(n) \to \mathbb{R} \) such that
\[
\mathcal{S}_{\mathfrak{A}}(\mathrm{X}) \leq F(\mathfrak{A} + \mathrm{X}) - F(\mathfrak{A})
\quad \text{for all } \mathrm{X} \in \mathrm{Sym}(n).
\]

Consequently, according to the preceding analysis, the limiting equation in Theorem~\ref{Thm5.1-B-L} takes the form
\[
|Du_{0}|^{p} \Delta u_{0} - (u_{0})_{t}
  = \lambda_{0}(x_{0}, t_{0})(u_{0})_{+}^{\mu}(x,t)
  \quad \text{in} \quad \mathbb{R}^{n} \times \mathbb{R},
\]
provided that $\mu > \tfrac{p}{2}$.
\end{remark}

\begin{remark}
A nontrivial space-independent blow-up solution
\begin{equation*}
  u(t) = \big[-(1-\mu)\lambda_{0}(x_{0}, t_{0})(t_{0}-t)\big]_{+}^{\frac{1}{1-\mu}}
\end{equation*}
satisfies
\begin{equation*}
  |Du|^{p} F(D^{2}u) - u_{t} = \lambda_{0}(x_{0}, t_{0}) u_{+}^{\mu}(x,t)
  \quad \text{in} \quad \mathbb{R}^{n} \times \mathbb{R}.
\end{equation*}

When $F(\cdot) = \mathrm{Tr}(\cdot)$, a nontrivial time-independent blow-up solution is given by
\begin{equation*}
u(x)=
\begin{cases}
  \mathrm{C}_{p,\mu,\lambda_{0}}(x_{i})_{\pm}^{\frac{2+p}{1+p-\mu}}, & i=1,2,\dots,n, \\[4pt]
  \mathrm{C}_{p,\mu,\lambda_{0}} (|x-x_{0}|-\mathcal{R})_{+}^{\frac{2+p}{1+p-\mu}},
\end{cases}
\end{equation*}
where
\begin{equation*}
  \mathrm{C}_{p,\mu,\lambda_{0}}
  = \bigg[ \frac{\lambda_{0}(x_{0}, t_{0})(1+p-\mu)^{2+p}}{(2+p)^{1+p}(1+\mu)} \bigg]^{\frac{1}{1+p-\mu}}.
\end{equation*}
\end{remark}

\subsection{Liouville-type result}

In this subsection, with Theorem~\ref{Thm1} in hand, we proceed to prove Theorem~\ref{Thm4}.

\begin{proof}[{\bf Proof of Theorem~\ref{Thm4}}]
For every sufficiently large $ r > 0 $, we define the auxiliary function $ v_{r}: Q_{1} \rightarrow \mathbb{R}^{+} $ by
\begin{equation*}
  v_{r}(x,t) := \frac{u(rx, r^{\theta}t)}{r^{\frac{2+p}{1+p-\mu}}}.
\end{equation*}
A direct computation shows that $ v_{r} $ satisfies, in the viscosity sense,
\begin{equation*}
  |Dv_{r}|^{p}\widehat{F}(D^{2}v_{r}) - (v_{r})_{t} = \widehat{\lambda}_{0}(x,t)(v_{r}^{\mu})_{+} \quad \text{in } \quad  Q_{1},
\end{equation*}
and that $ v_{r}(0,0) = 0 $, where
\[
\widehat{F}(D^{2}v_{r}) := r^{\frac{p-2\mu}{1+p-\mu}} F(r^{\frac{2\mu-p}{1+p-\mu}}D^{2}u),
\quad  \text{and} \quad
\widehat{\lambda}_{0}(x,t) := \lambda_{0}(rx, r^{\theta}t).
\]

We claim that $\|v_{r}\|_{L^{\infty}(Q_{1})} = o(1)$ as $r \to \infty$.
Indeed, for each $r>0$, let $(x_{r}, t_{r}) \in \mathbb{R}^{n} \times \mathbb{R}$ be a point where $v_{r}$ attains its maximum:
\[
  v_{r}(x_{r}, t_{r}) = \sup_{Q_{1}} v_{r}(x,t).
\]
We distinguish two cases:

\smallskip
\noindent
\textbf{Case 1.} If $\max\{|rx_{r}|, |r^{\theta}t_{r}|^{1/\theta}\}$ remains bounded as $r \to \infty$, the claim follows from the continuity of $u$.

\smallskip
\noindent
\textbf{Case 2.} If $\max\{|rx_{r}|, |r^{\theta}t_{r}|^{1/\theta}\} \to \infty$ as $r \to \infty$, then by assumption,
\begin{align*}
  v_{r}(x_{r}, t_{r})
  &= \frac{u(rx_{r}, r^{\theta}t_{r})}{\max\{|rx_{r}|, |r^{\theta}t_{r}|^{1/\theta}\}^{\frac{2+p}{1+p-\mu}}}
     \max\{|x_{r}|, |t_{r}|^{1/\theta}\}^{\frac{2+p}{1+p-\mu}} \\
  &\leq \mathrm{C}(n,p,\mu)\, o(1) \to 0 \quad \text{as } \quad r \to \infty.
\end{align*}
This proves the claim.

\smallskip
Consequently, by Theorem~\ref{Thm1}, we have
\begin{align}\label{Sec5.2:eq1}
\begin{split}
  v_{r}(x,t)
  &\leq \mathrm{C}(n, \lambda, \Lambda, \mathcal{M}_{1}, \mathcal{M}_{2}, p, \mu)\,
  \|v_{r}\|_{L^{\infty}(Q_{1})}\,
  \mathrm{dist}_{p}\big((x,t),(0,0)\big)^{\frac{2+p}{1+p-\mu}} \\
  &\leq \mathrm{C}(n, \lambda, \Lambda, \mathcal{M}_{1}, \mathcal{M}_{2}, p, \mu)\,
  o(1)\,\max\{|x|, |t|^{1/\theta}\}^{\frac{2+p}{1+p-\mu}}
\end{split}
\end{align}
for all $(x, t) \in Q_{1/2}$ and all sufficiently large $r$.

Assume, by contradiction, that there exists $(x_{0}, t_{0}) \in (\mathbb{R}^{n} \times \mathbb{R}^{+})\setminus \{(0,0)\}$ such that $u(x_{0}, t_{0}) > 0$.
From \eqref{Sec5.2:eq1}, for any
\[
  0 < \delta < \frac{u(x_{0}, t_{0})}{\max\{|x_{0}|, |t_{0}|^{1/\theta}\}^{\frac{2+p}{1+p-\mu}}},
\]
there exists $r_{0} = r_{0}(p, \mu, \delta) > 0$ such that
\[
  \sup_{Q_{1/2}} \frac{v_{r}(x,t)}{\max\{|x|, |t|^{1/\theta}\}^{\frac{2+p}{1+p-\mu}}} \leq \delta
  \quad \text{for all }  r > r_{0}.
\]
Hence, for $r \gg 2\max\{|x_{0}|, |t_{0}|^{1/\theta}, r_{0}\}$, we have
\begin{align*}
\frac{u(x_{0}, t_{0})}{\max\{|x_{0}|, |t_{0}|^{1/\theta}\}^{\frac{2+p}{1+p-\mu}}}
&\leq \sup_{Q_{r/2}} \frac{u(x,t)}{\max\{|x|, |t|^{1/\theta}\}^{\frac{2+p}{1+p-\mu}}} \\
&= \sup_{Q_{1/2}} \frac{v_{r}(x,t)}{\max\{|x|, |t|^{1/\theta}\}^{\frac{2+p}{1+p-\mu}}} \leq \delta,
\end{align*}
which contradicts the choice of $\delta$.
Therefore, the proof of Theorem~\ref{Thm4} is complete.
\end{proof}

\vspace{2mm}

\section{Discussions and future directions}\label{Section 6}

In this section, we provide a concise discussion of the methodology underlying the proofs of the main results presented in this paper.
The analytical strategy adopted here is sufficiently flexible to yield several further applications related to regularity at free boundary points.

\subsection{Discussions}
To this end, we establish an $L^{\delta}$-average estimate for a class of fully nonlinear singular elliptic equations.
We begin by recalling a fundamental second-order $L^{\delta}$ estimate for singular equations.

\begin{Lemma}[{\bf \cite[Theorem 1.1]{BBO24}}]\label{Sec4:lem1}
Let $u \in C^{0}(B_{1})$ be a viscosity solution of
\begin{equation*}
  |Du|^{p} F(D^{2}u) = f(x) \quad \text{in} \quad B_{1}
\end{equation*}
in the viscosity sense, where $-1 < p < 0$ and $f \in C^{0}(B_{1}) \cap L^{n}(B_{1})$.
Then there exists a positive constant $\sigma$ such that, for every $\delta \in (0,\sigma)$,
\begin{equation*}
  \left( \intav{B_{1/2}} |D^{2}u|^{\delta} \, dx \right)^{\!\frac{1}{\delta}}
  \leq \mathrm{C} \Big( \|u\|_{L^{\infty}(B_{1})} + \|f\|_{L^{n}(B_{1})}^{\frac{1}{1+p}} \Big),
\end{equation*}
where $\mathrm{C} = \mathrm{C}(n,\lambda,\Lambda,\delta) > 0$.
\end{Lemma}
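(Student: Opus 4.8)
The plan is to reduce the singular equation to a two–sided uniformly elliptic differential inequality carrying an $L^{n}$ right–hand side, and then invoke the classical Caffarelli–Escauriaza $W^{2,\delta}$ estimate. First I would record the local Lipschitz bound available for operators of this type in the range $-1<p<0$: every bounded viscosity solution of $|Du|^{p}F(D^{2}u)=f$ satisfies
\[
\|Du\|_{L^{\infty}(B_{3/4})}\le \ell:=\mathrm{C}_{0}\Big(\|u\|_{L^{\infty}(B_{1})}+\|f\|_{L^{n}(B_{1})}^{\frac{1}{1+p}}\Big),
\]
with $\mathrm{C}_{0}=\mathrm{C}_{0}(n,\lambda,\Lambda,p)$, the exponent $\tfrac{1}{1+p}$ being natural in view of the scaling of the equation. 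Granting this, at any point where a smooth test function has non–vanishing gradient the equation reads $F(D^{2}u)=|Du|^{-p}f$; since $-p\in(0,1)$ and $|Du|\le\ell$, the prefactor obeys $|Du|^{-p}\le\ell^{-p}$, and the uniform ellipticity $\mathscr{P}^{-}_{\lambda,\Lambda}(D^{2}u)\le F(D^{2}u)\le\mathscr{P}^{+}_{\lambda,\Lambda}(D^{2}u)$ (valid because $F(\mathrm{O}_{n})=0$) yields
\[
\mathscr{P}^{-}_{\lambda,\Lambda}(D^{2}u)\le \ell^{-p}|f|,\qquad \mathscr{P}^{+}_{\lambda,\Lambda}(D^{2}u)\ge-\ell^{-p}|f|\qquad\text{in }B_{3/4},
\]
i.e. $u$ belongs to the Pucci class associated with $(\lambda,\Lambda)$ and right–hand side $\ell^{-p}|f|\in L^{n}(B_{3/4})$.

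The main obstacle is making this reduction rigorous across the critical set $\{Du=0\}$, where the singular prefactor $|Du|^{-p}$ is undefined and the notion of viscosity solution for $|Du|^{p}F(D^{2}u)=f$ must be read in the Birindelli–Demengel sense, test functions touching at a critical point being treated by a separate clause. One must check that whenever a $C^{2}$ function $\varphi$ touches $u$ from above (resp. below) at $x_{0}$ with $D\varphi(x_{0})=0$, the admissibility condition built into that definition already forces $\mathscr{P}^{-}_{\lambda,\Lambda}(D^{2}\varphi(x_{0}))\le \ell^{-p}|f(x_{0})|$ (resp. the companion inequality) — typically because such contact forces $D^{2}\varphi(x_{0})\le 0$, or can be reduced to that case by an elementary perturbation $\varphi\mapsto\varphi\pm\varepsilon|x-x_{0}|^{q}$ with a suitable $q>2$. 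This is precisely where $p<0$ is genuinely used, and it is the technical heart of the argument; its purely elliptic nature also explains why the estimate is insensitive to the sign structure of $f$. (A direct measure–theoretic proof — ABP plus Calderón–Zygmund decomposition adapted to the singular operator — is an alternative route, and the one likely behind \cite{BBO24}, with the advantage of producing constants independent of $p$.)

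Once $u$ lies in the Pucci class with $L^{n}$ data, the Caffarelli–Escauriaza $W^{2,\delta}$ estimate (see \cite{CC95}) supplies a universal $\sigma=\sigma(n,\lambda,\Lambda)>0$ such that, for every $\delta\in(0,\sigma)$,
\[
\Big(\intav{B_{1/2}}|D^{2}u|^{\delta}\,dx\Big)^{\frac{1}{\delta}}\le\mathrm{C}\Big(\|u\|_{L^{\infty}(B_{3/4})}+\ell^{-p}\|f\|_{L^{n}(B_{3/4})}\Big),
\]
with $\mathrm{C}=\mathrm{C}(n,\lambda,\Lambda,\delta)$. It then remains only to absorb the Lipschitz constant into the desired majorant: writing $X=\|u\|_{L^{\infty}(B_{1})}$ and $Y=\|f\|_{L^{n}(B_{1})}^{1/(1+p)}$ one has $\ell\le\mathrm{C}_{0}(X+Y)$, and, using $0<-p<1$, $1+p>0$ and $Y\le X+Y$,
\[
\ell^{-p}\|f\|_{L^{n}(B_{3/4})}\le\mathrm{C}_{0}^{-p}(X+Y)^{-p}\,Y^{1+p}\le\mathrm{C}_{0}^{-p}(X+Y)^{-p}(X+Y)^{1+p}=\mathrm{C}_{0}^{-p}(X+Y).
\]
Combining the last two displays with $\|u\|_{L^{\infty}(B_{3/4})}\le X$ gives the asserted bound; a little extra care (or restricting $p$ to a fixed compact subinterval of $(-1,0)$) removes any residual $p$–dependence from the constant.
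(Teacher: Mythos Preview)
The paper does not contain a proof of this lemma: it is stated as a quotation of \cite[Theorem 1.1]{BBO24} and used as a black box in the proof of Corollary~\ref{Sec4:Coro7}. There is therefore nothing in the paper to compare your proposal against.

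As a standalone sketch your strategy is the natural one --- reduce to the Pucci class via the interior Lipschitz bound and then appeal to the Caffarelli--Escauriaza $W^{2,\delta}$ estimate --- and you correctly isolate the only genuine difficulty, namely justifying the Pucci inequalities at points of the critical set $\{Du=0\}$ under the Birindelli--Demengel definition. One caveat: the Lipschitz estimate you invoke with $\|f\|_{L^{n}}^{1/(1+p)}$ on the right is not quite standard (the usual interior gradient bounds for these singular operators are stated with $\|f\|_{L^{\infty}}$), so that step would itself require justification; the direct ABP/Calder\'on--Zygmund approach you mention, which is indeed what \cite{BBO24} carries out, avoids this detour.
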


\begin{remark}
In \cite[Theorem 1.1]{BKO24}, the authors also obtained second-order $L^{\delta}$ estimates for fully nonlinear degenerate elliptic equations ($p \geq 0$):
\begin{equation*}
  \big\| |Du|^{p}Du \big\|_{W^{1,\delta}(B_{1/2})}
  \leq \mathrm{C} \Big( \|u\|_{L^{\infty}(B_{1})}^{1+p} + \|f\|_{L^{n}(B_{1})} \Big),
\end{equation*}
where $\mathrm{C} > 0$ is a universal constant.
\end{remark}

We now state the following $L^{\delta}$-average estimate (cf. \cite[Lemma 3.10]{daSRS19} for the elliptic counterpart in the variational scenario).

\begin{Corollary}[{\bf $L^{\delta}$-average estimate}]\label{Sec4:Coro7}
Let $u \in C^{0}(B_{1})$ be a viscosity solution of
\begin{equation*}
  |Du|^{p} F(D^{2}u) = \lambda_{0}(x) u^{\mu}\chi_{\{u>0\}}(x)
  \quad \text{in} \quad B_{1},
\end{equation*}
with $-1 < p < 0$ and $x_{0} \in \partial \{u>0\} \cap B_{1/2}$.
Then there exists a universal constant $\mathrm{c} > 0$ such that
\begin{equation*}
  \left( \intav{B_{r}(x_{0})} \big(|Du(x)|^{p}|D^{2}u(x)|\big)^{\delta} \, dx \right)^{\!\frac{1}{\delta}}
  \leq \mathrm{c} \, r^{\frac{(2+p)\mu}{1+p-\mu}}.
\end{equation*}
\end{Corollary}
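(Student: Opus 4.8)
The plan is to combine the $C^{\frac{2+p}{1+p-\mu}}$ growth estimate along the free boundary (the elliptic analogue of Theorem~\ref{Thm1}, or equivalently Theorem~\ref{Sec3:thm1} transplanted to the singular elliptic setting) with the second-order $L^{\delta}$ bound of Lemma~\ref{Sec4:lem1}, via an intrinsic rescaling centered at the free boundary point $x_{0}$. First I would fix $x_{0} \in \partial\{u>0\} \cap B_{1/2}$ and, for $0 < r < 1/4$, introduce the normalized function
\begin{equation*}
  v_{r}(y) := \frac{u(x_{0} + r y)}{r^{\frac{2+p}{1+p-\mu}}}, \qquad y \in B_{1}.
\end{equation*}
A direct computation shows that $v_{r}$ is a viscosity solution of $|Dv_{r}|^{p} F_{r}(D^{2}v_{r}) = \lambda_{0}(x_{0}+ry)\,(v_{r})_{+}^{\mu}$ in $B_{1}$, where $F_{r}(\mathrm{X}) := r^{\frac{p-2\mu}{1+p-\mu}} F(r^{\frac{2\mu-p}{1+p-\mu}}\mathrm{X})$ is again uniformly elliptic with the same constants. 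The exponent $\frac{2+p}{1+p-\mu}$ is chosen precisely so that the scaling of $D^{2}u$ and of the right-hand side match: one finds $\|f_{r}\|_{L^{\infty}(B_{1})} \le \mathcal{M}_{2}\, \|v_{r}\|_{L^{\infty}(B_{1})}^{\mu}$, and by the growth estimate (the elliptic counterpart of Theorem~\ref{Sec3:thm1}), $\|v_{r}\|_{L^{\infty}(B_{1})} \le \mathrm{C}$ uniformly in $r$. Hence both $\|v_{r}\|_{L^{\infty}(B_{1})}$ and $\|f_{r}\|_{L^{n}(B_{1})}$ are bounded by universal constants.

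Next I would apply Lemma~\ref{Sec4:lem1} to $v_{r}$ on $B_{1/2}$, obtaining
\begin{equation*}
  \left( \intav{B_{1/2}} |D^{2}v_{r}|^{\delta}\, dy \right)^{\!\frac{1}{\delta}}
  \le \mathrm{C}\Big( \|v_{r}\|_{L^{\infty}(B_{1})} + \|f_{r}\|_{L^{n}(B_{1})}^{\frac{1}{1+p}} \Big) \le \mathrm{C}
\end{equation*}
for every $\delta \in (0,\sigma)$. Here one should note that $Dv_{r}$ may vanish, so $|Dv_{r}|^{p}$ (with $p<0$) is singular; the clean object to track is $D^{2}v_{r}$ itself, exactly as in Lemma~\ref{Sec4:lem1}. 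To recover the stated estimate involving $|Du|^{p}|D^{2}u|$, I would either (i) observe that the quantity controlled is in fact $D^{2}v_{r}$ and translate the claim accordingly, reading $|Du|^{p}|D^{2}u|$ as the natural scaling-covariant combination, or (ii) invoke the degenerate-type formulation $\big\||Dv_{r}|^{p}Dv_{r}\big\|_{W^{1,\delta}}$ from \cite{BKO24} mentioned in the remark; in either case the point is that the controlled quantity scales like $r^{\frac{2+p}{1+p-\mu} - 2} = r^{\frac{(2+p)\mu - (1+p-\mu)\cdot 0}{1+p-\mu}}$ after we unwind. Unwinding the rescaling, $D^{2}u(x) = r^{\frac{2+p}{1+p-\mu}-2}\, D^{2}v_{r}\big(\tfrac{x-x_{0}}{r}\big)$ and a change of variables $x = x_{0}+ry$ in the average converts $\intav{B_{r/2}(x_{0})}$ into $\intav{B_{1/2}}$, producing the factor $r^{\big(\frac{2+p}{1+p-\mu}-2\big)} = r^{\frac{(2+p)\mu}{1+p-\mu}}\cdot r^{-\frac{something}}$; tracking the arithmetic carefully, $\frac{2+p}{1+p-\mu}-2 = \frac{2+p - 2(1+p-\mu)}{1+p-\mu} = \frac{2\mu - p}{1+p-\mu}$, and after incorporating the extra $|Du|^{p}$ factor (which scales like $r^{p\cdot(\frac{2+p}{1+p-\mu}-1)} = r^{\frac{p(1+\mu)}{1+p-\mu}}$), the powers combine to $\frac{2\mu-p}{1+p-\mu} + \frac{p(1+\mu)}{1+p-\mu} = \frac{2\mu - p + p + p\mu}{1+p-\mu} = \frac{\mu(2+p)}{1+p-\mu}$, which is exactly the claimed exponent. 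Finally, re-scaling $r \mapsto r/2$ and renaming constants gives the assertion for $0 < r < 1/2$.

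The main obstacle I anticipate is making the second step \emph{honestly rigorous}: Lemma~\ref{Sec4:lem1} requires $f_{r} \in C(B_{1}) \cap L^{n}(B_{1})$, but the right-hand side $\lambda_{0}(x_{0}+ry)(v_{r})_{+}^{\mu}\chi_{\{v_{r}>0\}}$ is only continuous \emph{where $v_{r}>0$} — it has a jump across the free boundary of $v_{r}$ unless $(v_{r})_{+}^{\mu}$ vanishes there, which it does by continuity of $v_{r}$; so in fact $f_{r} = \lambda_{0}(x_{0}+ry)(v_{r})_{+}^{\mu}$ \emph{is} continuous (the characteristic function is redundant since $(v_{r})_{+}^{\mu}=0$ on $\{v_{r}=0\}$), and bounded, hence in $C(B_{1})\cap L^{n}(B_{1})$. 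The other delicate point is the precise meaning of $|Du|^{p}|D^{2}u|$ with $p<0$ on the (measure-zero, but possibly badly behaved) critical set $\{Du=0\}$; here I would rely on the fact that Lemma~\ref{Sec4:lem1} already incorporates this — its conclusion is stated for $D^{2}u$ alone, and the product $|Du|^{p}Du \in W^{1,\delta}$ interpretation from \cite{BKO24,BBO24} is the correct object — so the cleanest route is to state and prove the corollary for the scaling-invariant quantity that Lemma~\ref{Sec4:lem1} actually controls, and remark that formally this is $|Du|^{p}|D^{2}u|$. Everything else is a routine change of variables and bookkeeping of exponents.
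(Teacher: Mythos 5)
Your proposal takes a genuinely different route from the paper. You rescale directly by the intrinsic growth rate $r^{\frac{2+p}{1+p-\mu}}$, use the sharp growth estimate to get uniform $L^\infty$ bounds on $v_r$, and then apply Lemma~\ref{Sec4:lem1} in a single stroke. The paper instead argues by a dyadic contradiction: it defines the normalized quantity
\begin{equation*}
\mathcal{S}_r[u] := \left(\intav{B_1}\big(|Du(rx)|^{p}|D^{2}u(rx)|\big)^{\delta}\,dx\right)^{\frac{1}{\delta(1+p)}},
\end{equation*}
proves the recursive inequality
$\mathcal{S}_{1/2^{k+1}}[u] \le \max\{\mathrm{C}_*2^{-k\gamma}, 2^{-\gamma}\mathcal{S}_{1/2^{k}}[u]\}$
with $\gamma = \tfrac{(2+p)\mu}{(1+p)(1+p-\mu)}$ by contradiction, and in the contradiction step normalizes $v_k(x) = u_k(2^{-j_k}x)/\mathcal{S}_{1/2^{j_k+1}}[u_k]$ — that is, by $\mathcal{S}$ itself rather than by a power of the radius — and then combines Lemma~\ref{Sec4:lem1} with the uniform gradient estimate from \cite{CL13}. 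Your version is conceptually cleaner but, as you yourself flag, it has a gap that the paper's extra machinery is precisely designed to address.

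The gap is real and is exactly where you say it is. Lemma~\ref{Sec4:lem1} controls $\left(\intav{B_{1/2}}|D^{2}v_r|^{\delta}\right)^{1/\delta}$, but the corollary demands control of $\left(\intav{B_{1/2}}\big(|Dv_r|^{p}|D^{2}v_r|\big)^{\delta}\right)^{1/\delta}$ after unwinding the scaling — these are not the same object when $p<0$, since $|Dv_r|^{p}=|Dv_r|^{-|p|}$ blows up on the critical set $\{Dv_r=0\}$, and a free-boundary point $x_0$ is precisely where the gradient vanishes. Your workaround~(i) amounts to replacing the statement being proved by a weaker one (a bound on $D^{2}u$ alone), which is not what the corollary asserts. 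Your workaround~(ii) cites the $\big\||Du|^{p}Du\big\|_{W^{1,\delta}}$ estimate of \cite{BKO24}, but that result is stated for the degenerate regime $p\ge 0$, whereas here $p<0$; moreover even formally it controls $D(|Du|^{p}Du)$, not $|Du|^{p}|D^{2}u|$, and the chain rule for $p<0$ generates a negative-power factor of $|Du|$ that is not obviously integrable. You also cannot rescue the bound from the equation itself: $|Dv_r|^{p}F(D^{2}v_r)=f_r$ together with uniform ellipticity gives only $|Dv_r|^{p}|D^{2}v_r|\gtrsim|f_r|/(n\Lambda)$, a lower bound, because $|F(\mathrm{X})|\le n\Lambda\|\mathrm{X}\|$ but no reverse inequality holds (the Pucci operators vanish on traceless-like matrices). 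What the paper does differently — bringing in the $\mathcal{S}$-normalization so that $\mathcal{S}_{1/2}[v_k]$ is an explicit dyadic power, and invoking the uniform gradient estimate of \cite{CL13} so that $\|Dv_k\|_{L^{\infty}(B_{1/2})}$ is quantitatively small — is precisely the extra structural input missing from your argument. To repair your proposal, you would need either a version of Lemma~\ref{Sec4:lem1} that directly controls the singular product $|Du|^{p}D^{2}u$ in $L^{\delta}$, or to switch to the paper's $\mathcal{S}$-normalized dyadic scheme and supply the gradient estimate as a second input.

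One minor point that is fine as stated: your observation that $f_r=\lambda_0(x_0+r\cdot)(v_r)_{+}^{\mu}$ is continuous (because $(v_r)_{+}^{\mu}$ extends by zero across $\{v_r=0\}$) is correct and is needed to legitimately apply Lemma~\ref{Sec4:lem1}; the paper implicitly uses the same fact.
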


\begin{proof}
Without loss of generality, we assume $x_{0} = 0$.
We define
\begin{equation*}
  \mathcal{S}_{r}[u]
  = \left( \intav{B_{1}} \big(|Du(rx)|^{p}|D^{2}u(rx)|\big)^{\delta} \, dx \right)^{\!\frac{1}{\delta(1+p)}},
  \quad 0 < r \leq 1.
\end{equation*}

As stated in Theorem \ref{Sec3:thm1}, it   suffices to show that
\begin{equation}\label{Sec4:eq43}
  \mathcal{S}_{1/2^{k+1}}[u]
  \leq \max \Bigg\{
      \mathrm{C}_{\ast}\!\left(\frac{1}{2^{k}}\right)^{\!\frac{(2+p)\mu}{(1+p)(1+p-\mu)}},
      \left(\frac{1}{2}\right)^{\!\frac{(2+p)\mu}{(1+p)(1+p-\mu)}}
      \mathcal{S}_{1/2^{k}}[u]
  \Bigg\}
\end{equation}
for all $k \in \mathbb{N}$ and some constant $\mathrm{C}_{\ast} > 0 $.

Suppose, by contradiction, that there exist functions $u_{k}$ and integers $j_{k} \in \mathbb{N}$ such that
\begin{align}\label{Sec4:eq44}
\begin{split}
  \mathcal{S}_{1/2^{j_{k}+1}}[u_{k}]
  &> \max \Bigg\{
      k \!\left(\frac{1}{2^{j_{k}}}\right)^{\!\frac{(2+p)\mu}{(1+p)(1+p-\mu)}},
      \left(\frac{1}{2}\right)^{\!\frac{(2+p)\mu}{(1+p)(1+p-\mu)}}
      \mathcal{S}_{1/2^{j_{k}}}[u_{k}]
  \Bigg\} \\
  &> \max \Bigg\{
      k \!\left(\frac{1}{2^{j_{k}}}\right)^{\!\frac{2+p}{1+p-\mu}},
      \left(\frac{1}{2}\right)^{\!\frac{(2+p)\mu}{(1+p)(1+p-\mu)}}
      \mathcal{S}_{1/2^{j_{k}}}[u_{k}]
  \Bigg\}.
\end{split}
\end{align}

Defining the rescaled function $v_{k}: B_{1} \to \mathbb{R}$ by
\begin{equation*}
  v_{k}(x) = \frac{u_{k}(2^{-j_{k}}x)}{\mathcal{S}_{1/2^{j_{k}+1}}[u_{k}]},
\end{equation*}
then straightforward computations yield:
\begin{enumerate}[i)]
  \item $\mathcal{S}_{1/2}[v_{k}] = 2^{-\frac{(p+2)j_{k}}{1+p}}$ and
        $\mathcal{S}_{1}[v_{k}] \leq 2^{\frac{(2+p)\mu}{(1+p)(1+p-\mu)}}$ (by \eqref{Sec4:eq44});
  \item $v_{k}$ satisfies, in the viscosity sense,
        \begin{equation}\label{Sec4:eq45}
          |Dv_{k}|^{p} \widetilde{F}(D^{2}v_{k})
          = \lambda_{0}(2^{-j_{k}}x)
            \frac{2^{-(p+2)j_{k}}}
                 {(\mathcal{S}_{1/2^{j_{k}+1}}[u_{k}])^{1+p-\mu}}
            v_{k}^{\mu}\chi_{\{v_{k}>0\}}(x)
          := \widetilde{f_{k}}(x),
        \end{equation}
        where
        \begin{equation*}
          \widetilde{F}(\mathrm{X})
          = \frac{1}{\mathcal{S}_{1/2^{j_{k}+1}}[u_{k}]\,2^{2j_{k}}}
            F\!\big(\mathcal{S}_{1/2^{j_{k}+1}}[u_{k}]\,2^{2j_{k}}\mathrm{X}\big),
        \end{equation*}
        and $\|\widetilde{f_{k}}\|_{L^{\infty}(B_{1})}
        \leq \mathcal{M}_{2}\widehat{\mathrm{C}}^{\mu}/k^{1+p}$ (by \eqref{Sec4:eq44});
  \item $\|v_{k}\|_{L^{\infty}(B_{1})} \leq \widehat{\mathrm{C}}/k$
        (by \cite[Theorem 1.2]{SLR21} and \eqref{Sec4:eq44}).
\end{enumerate}

Applying Lemma \ref{Sec4:lem1} together with the uniform gradient estimate from \cite[Theorem 1]{CL13}, we obtain
\begin{equation}\label{Sec4:eq46}
  \|Dv_{k}\|_{L^{\infty}(B_{1/2})}
  \leq \mathrm{C}_{1}\!\left(\|v_{k}\|_{L^{\infty}(B_{1})}
  + \|\widetilde{f_{k}}\|_{L^{\infty}(B_{1})}^{\frac{1}{1+p}}\right)
  \leq \frac{\widetilde{\mathrm{C}}_{1}}{k},
\end{equation}
and
\begin{equation}\label{Sec4:eq47}
  \left(\intav{B_{1/2}} |D^{2}v_{k}|^{\delta} \, dx \right)^{\!\frac{1}{\delta}}
  \leq \frac{\widetilde{\mathrm{C}}_{2}}{k}.
\end{equation}
Combining \eqref{Sec4:eq46} and \eqref{Sec4:eq47}, we get
\begin{equation*}
  0 < \mathcal{S}_{1/2}[v_{k}]
  = 2^{-\frac{(p+2)j_{k}}{1+p}}
  \leq \frac{\widetilde{\mathrm{C}}}{k},
\end{equation*}
which yields a contradiction for sufficiently large $k$.
\end{proof}

Next, we shall develop a version of the gradient estimate that is different from Theorem \ref{Thm3}.
To establish the gradient estimate, we need to restrict our attention to the following class of functions:
\begin{Definition}
We call $ u \in \mathcal{T}_{p}(Q_{1}) $ if
\begin{enumerate}[i)]

\item $ u $ is a viscosity solution to
\begin{equation*}
|Du|^{p} F(D^{2}u) - u_{t} = f \in C^{0}(\overline{Q_{1}}) \cap L^{\infty}(Q_{1})
\end{equation*}
and satisfies the following a priori estimate:
$$
\|Du\|_{L^{\infty}(Q_{1/2})} \leq \mathrm{C}(\|u\|_{L^{\infty}(Q_1)}+\|f\|^{\frac{1}{1+p}}_{L^{\infty}(Q_1)}),
$$
where $ \mathrm{C} > 0 $ is a universal constant;

\item For each $ j \in \mathbb{N} $ there exists a universal constant $ \gamma_{j} > 0 $ with $ \displaystyle \liminf_{j\rightarrow \infty} \gamma_{j} >0 $ such that
\begin{equation*}
  \sup_{B_{\frac{1}{2^{j+1}}}\times \big(-\alpha_{j}(\frac{1}{2})^{\theta},0\big]} |D u(x,t)| \geq \gamma_{j}L_{\frac{1}{2^{j+1}}}[|D u(x,t)|], \ \ \forall j \in \mathbb{N},
\end{equation*}
where $ \alpha_{j} := \frac{1}{2^{2j}} \bigg( \frac{1}{L_{1/2^{j+1}}[|Du_{j}|]}    \bigg)^{p} $.
\end{enumerate}
\end{Definition}

We now present a new formulation of the gradient decay property.

\begin{Corollary}[{\bf New Gradient decay}]
\label{Sec6:Coro2}
Suppose that $ F $ satisfies \hyperref[F1]{\bf (F1)}, \hyperref[F2]{\bf (F2)},\hyperref[F3]{\bf (F3)}
and $ \mathcal{T}_{p}(Q_{1}) \neq \emptyset$. Then there exists a positive constant $ \widetilde{\mathrm{C}}_{1}^{*} = \widetilde{\mathrm{C}}_{1}^{*}(n, \lambda, \Lambda, p, \mu, \mathcal{M}_{2}) $ such that for all $ u \in \mathbb{J}(F, \lambda_{0}, \mu)(Q_{1}) $, there holds
\begin{equation*}
  |D u(x,t)| \leq \widetilde{\mathrm{C}}_{1}^{*} \cdot \Gamma(x,t)^{\frac{1+\mu}{1+p-\mu}} \ \ \text{for all} \ \ (x,t) \in Q_{1/2},
\end{equation*}
where
 \begin{align*}
 \Gamma(x,t):= \left\{
     \begin{aligned}
      & \sup \{r\geq 0: Q_{r}(x,t) \subset \{u>0\}\}, && \text{for} \ \ (x,t) \in \{u>0\},        \\
     & 0,        &&   \text{otherwise}.  \\
     \end{aligned}
     \right.
\end{align*}
\end{Corollary}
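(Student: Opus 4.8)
The plan is to run, for the \emph{gradient} of $u$, the same two–stage argument that yields Theorem~\ref{Sec3:thm1} for $u$ itself, with the class $\mathcal{T}_{p}(Q_{1})$ taking over the role that $\mathbb{J}(F,\lambda_{0},\mu)(Q_{1})$ together with the non-degeneracy of Theorem~\ref{Thm2} played there. Writing $L_{(r,x_{0},t_{0})}[|Du|]:=\sup_{Q_{r}^{-}(x_{0},t_{0})}|Du|$, the first and decisive stage is a dyadic gradient decay lemma: there is a universal $\widetilde{\mathrm{C}}_{0}>0$ with $L_{1/2^{j+1}}[|Du|]\le\widetilde{\mathrm{C}}_{0}\,(1/2^{j})^{\frac{1+\mu}{1+p-\mu}}$ for every $j\in\mathbb{N}$ and every $u\in\mathbb{J}(F,\lambda_{0},\mu)(Q_{1})\cap\mathcal{T}_{p}(Q_{1})$. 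I would argue by contradiction: given $u_{k}$ in this class and $j_{k}$ with $M_{k}:=L_{1/2^{j_{k}+1}}[|Du_{k}|]>k\,2^{-j_{k}\frac{1+\mu}{1+p-\mu}}$, introduce the intrinsically rescaled competitor
\[
  \alpha_{k}:=\frac{1}{2^{2j_{k}}}\frac{1}{M_{k}^{\,p}},\qquad
  v_{k}(x,t):=\frac{u_{k}\big(2^{-j_{k}}x,\,\alpha_{k}t\big)}{2^{-j_{k}}M_{k}}\quad\text{in }Q_{1},
\]
chosen so that $|Dv_{k}|=M_{k}^{-1}|Du_{k}(\,\cdot\,)|$ and $v_{k}$ solves $|Dv_{k}|^{p}\widetilde{F}_{k}(D^{2}v_{k})-(v_{k})_{t}=c_{k}(v_{k})_{+}^{\mu}$ in $Q_{1}$ with $\widetilde{F}_{k}$ uniformly elliptic (and convex/concave) and $c_{k}$ uniformly bounded (indeed $c_{k}\to0$).

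Two competing normalizations are in play, and it is precisely the exponent $\frac{1+\mu}{1+p-\mu}=\frac{2+p}{1+p-\mu}-1$ that reconciles them. On the one hand the contradiction hypothesis forces $\alpha_{k}\le 2^{-\theta j_{k}}$, so the rescaling carries $Q_{1}^{-}$ into $Q_{1/2^{j_{k}}}^{-}$; combining the growth $L_{1/2^{j_{k}}}[u_{k}]\le \mathrm{C}_{0}2^{-(j_{k}-1)\frac{2+p}{1+p-\mu}}$ established inside the proof of Theorem~\ref{Sec3:thm1} with $M_{k}>k\,2^{-j_{k}\frac{1+\mu}{1+p-\mu}}$, all powers of $2^{j_{k}}$ cancel and one gets $\|v_{k}\|_{L^{\infty}(Q_{1}^{-})}\le\mathrm{C}/k\to0$. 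On the other hand, Proposition~\ref{Se2:Pro1}---read through Remark~\ref{Sec2:rmk2} when $p>0$, so as to bypass the loss of $C^{1,\kappa}$-control of $\widetilde{F}_{k}$ under the rescaling, and via the convex/concave $C^{1,\alpha}$ theory (the setting of \cite[Lemma~3.2]{SO19}) when $p=0$---gives a uniform estimate $\|Dv_{k}\|_{C^{\alpha}(\overline{Q_{1/2}^{-}})}\le\mathrm{C}$, whence $v_{k}\to0$ in $C^{0}$ upgrades to $Dv_{k}\to0$ in $C^{0}(Q_{1/2}^{-})$. This is the contradiction, because hypothesis~(ii) of $\mathcal{T}_{p}$ is calibrated exactly to this rescaling: its weight $\alpha_{j_{k}}=2^{-2j_{k}}M_{k}^{-p}$ equals $\alpha_{k}$ and its window $B_{1/2^{j_{k}+1}}\times(-\alpha_{j_{k}}(1/2)^{\theta},0]$ is the image of $Q_{1/2}^{-}$, so $\sup_{Q_{1/2}^{-}}|Dv_{k}|=M_{k}^{-1}\sup_{B_{1/2^{j_{k}+1}}\times(-\alpha_{j_{k}}(1/2)^{\theta},0]}|Du_{k}|\ge\gamma_{j_{k}}$; and since $M_{k}\le\|Du_{k}\|_{L^{\infty}(Q_{1/2})}\le\mathrm{C}$ one reads off $j_{k}\to\infty$, so $\liminf_{j}\gamma_{j}>0$ keeps $\gamma_{j_{k}}$ bounded below.

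Because hypothesis~(ii) holds for \emph{every} $j$, so does the dyadic lemma, and the passage to the statement is soft. For $r\in(0,1)$, picking $j$ with $2^{-j-1}\le r<2^{-j}$ gives $L_{r}[|Du|]\le\widetilde{\mathrm{C}}_{1}r^{\frac{1+\mu}{1+p-\mu}}$ at a free boundary point. To reach the $\Gamma$-formulation on $Q_{1/2}$ I would centre at a free boundary point: for $(x,t)\in\{u>0\}\cap Q_{1/2}$ with $\rho:=\Gamma(x,t)$, the cylinder $Q_{\rho}(x,t)$ abuts $\partial\{u>0\}$, hence $Q_{\rho}(x,t)\subset Q_{\mathrm{c}\rho}(z_{0},s_{0})$ for a free boundary point $(z_{0},s_{0})$, so Theorem~\ref{Thm1} yields $u\le\mathrm{C}\rho^{\frac{2+p}{1+p-\mu}}$ on $Q_{\rho/2}(x,t)$; applying the a priori gradient bound (i) of $\mathcal{T}_{p}$ (equivalently Proposition~\ref{Se2:Pro1}) to $w(y,\tau):=(\rho/2)^{-\frac{2+p}{1+p-\mu}}u\big(x+\tfrac{\rho}{2}y,\,t+(\tfrac{\rho}{2})^{\theta}\tau\big)$ on $Q_{1}$ gives $|Dw(0,0)|\le\mathrm{C}$, that is, $|Du(x,t)|\le\widetilde{\mathrm{C}}_{1}^{*}\rho^{\frac{1+\mu}{1+p-\mu}}$; when $(x,t)\notin\{u>0\}$ one has $\Gamma(x,t)=0$ while $u$ attains its minimum there, so $Du(x,t)=0$ by the $C^{1}$ regularity and there is nothing to prove.

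The one genuinely delicate step is the dyadic lemma, and within it a three-way balancing act: keeping the rescaled equation uniformly elliptic with bounded forcing even though $\widetilde{F}_{k}$ loses its $C^{1,\kappa}$ bound; carrying out the exponent arithmetic so that $\|v_{k}\|_{L^{\infty}}\to0$ (this is what pins down the power $\frac{1+\mu}{1+p-\mu}$, just as $\frac{2+p}{1+p-\mu}$ was pinned down in Lemma~\ref{Se3:lem1}); and matching the scaled cylinders with the non-degeneracy window in hypothesis~(ii) of $\mathcal{T}_{p}$---which is, after all, the reason that hypothesis is phrased with the weights $\alpha_{j}$ and the time interval $(-\alpha_{j}(1/2)^{\theta},0]$ rather than with the naive parabolic cylinder.
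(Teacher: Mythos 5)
Your core argument — the dyadic contradiction lemma in stage one — is essentially identical to the paper's proof: the same intrinsic rescaling $v_{j}(x,t)=2^{j}u_{j}(2^{-j}x,\alpha_{j}t)/L_{1/2^{j+1}}[|Du_{j}|]$ with $\alpha_{j}=2^{-2j}L_{1/2^{j+1}}[|Du_{j}|]^{-p}$, the same use of the growth control from Theorem~\ref{Sec3:thm1} to force $\|v_{j}\|_{L^{\infty}}\lesssim 1/j$, and the same contradiction between hypothesis~(ii) of $\mathcal{T}_{p}$ (which pins $L_{1/2}[|Dv_{j}|]\geq\gamma_{j}$ from below) and the a priori bound~(i) (which sends it to zero). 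Your exponent bookkeeping, the identity $2-\theta=\frac{p(1+\mu)}{1+p-\mu}$, and the observation that $j_{k}\to\infty$ are all correct. One small streamlining you make is worth noting: the paper states its dyadic lemma with a $\max$ of two alternatives, but as you correctly observed its contradiction argument only invokes the first prong, so your single-inequality formulation is cleaner and fully equivalent.

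Where your proposal diverges from the paper's intent is in the final passage from the dyadic estimate to the $\Gamma$-formulation. There you re-prove the bound from scratch by invoking Theorem~\ref{Thm1} together with Proposition~\ref{Se2:Pro1} applied to the rescaled function $w$, which is precisely the proof of Theorem~\ref{Thm3}. This is mathematically valid (and you correctly flag the rescaling issue with $C^{1,\kappa}$-control of $\widetilde{F}$ and the route around it via \cite{LLYZ25}), but it renders the dyadic lemma you proved in stage one entirely unused, and — more to the point — it defeats the stated purpose of Corollary~\ref{Sec6:Coro2}. The paper's closing remark explicitly says the corollary is designed to \emph{dispense with} Proposition~\ref{Se2:Pro1} and the $\mu\leq\frac{1}{1+p}$ restriction, at the price of assuming membership in $\mathcal{T}_{p}(Q_{1})$. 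The intended stage two is simply to rescale and translate the dyadic estimate around an arbitrary free boundary point (the ``as before'' reduction to Theorem~\ref{Sec3:thm1}), which requires no further regularity machinery. So: no gap, but your stage two should be replaced by the translation/rescaling step, both to actually use stage one and to preserve the corollary's methodological point.
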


\begin{proof}
As before, it suffices to prove that
\begin{equation}\label{Se4:eq1}
   L_{1/2^{j+1}}[|D u(x,t)|] \leq \max \bigg\{\widetilde{\mathrm{C}}_{2}^{*} \bigg(\frac{1}{2^{j}}\bigg)^{\frac{1+\mu}{1+p-\mu}}, \bigg(\frac{1}{2}\bigg)^{\frac{1+\mu}{1+p-\mu}}  L_{1/2^{j}}[|D u(x,t)|]\bigg\}
\end{equation}
for all $ j \in \mathbb{N} $ and some constant $ \widetilde{\mathrm{C}}_{2}^{*}>0 $.

Assume, by contradiction, that \eqref{Se4:eq1} fails. Then there exists a sequence $ u_{j} \in \mathbb{J}(F, \lambda_{1}, \mu)(Q_{1}) $ satisfying
\begin{equation}\label{Se4:eq2}
  L_{1/2^{j+1}}[|D u_{j}(x,t)|]  \geq \max \bigg\{j \bigg(\frac{1}{2^{j}}\bigg)^{\frac{1+\mu}{1+p-\mu}}, \bigg(\frac{1}{2}\bigg)^{\frac{1+\mu}{1+p-\mu}}  L_{1/2^{j}}[|D u_{j}(x,t)|]\bigg\}.
\end{equation}

We define the auxiliary function
\begin{equation*}
  v_{j}(x,t) := \frac{2^{j}u_{j}\big(\frac{1}{2^{j}}x,\alpha_{j}t\big)}{L_{1/2^{j+1}}[|Du_{j}|]}.
\end{equation*}
Using \eqref{Se4:eq2} and the fact that $ \frac{p(1+\mu)}{1+p-\mu} < 2 $ for $ 0 < \mu < 1 $, we obtain
\begin{equation}\label{Se4:eq3}
  0 < \alpha_{j} \leq \frac{1}{j^{p}} \frac{1}{2^{\big(2-\frac{p(1+\mu)}{1+p-\mu}\big)j}} \rightarrow 0 \quad \text{as} \quad j \rightarrow \infty.
\end{equation}
Moreover:
\begin{itemize}
  \item $ |Dv_{j}|^{p} \widetilde{F}(D^{2}v_{j}) - (v_{j})_{t} = \widetilde{\lambda}_{0}(x,t) (v_{j})_{+}^{\mu} $ in $ Q_{1} $ in the viscosity sense, where
  \begin{equation*}
    \widetilde{F}(\mathrm{X}) := \frac{1}{2^{j}L_{1/2^{j+1}}[|Du_{j}|]} F\big(2^{j}L_{1/2^{j+1}}[|Du_{j}|]\mathrm{X}\big),
  \end{equation*}
  and
  \begin{equation*}
   \widetilde{\lambda}_{0}(x,t) := \frac{1}{2^{(1+\mu)j}} \frac{1}{(L_{1/2^{j+1}}[|Du_{j}|])^{1+p-\mu}} \lambda_{0}\bigg(\frac{1}{2^{j}}x,\alpha_{j}t\bigg).
  \end{equation*}

  \item From \eqref{Sec3:eq1} and \eqref{Se4:eq2}, we deduce
   \begin{equation}\label{Se4:eq4}
     0 \leq v_{j}(x,t) \leq \frac{2^{j}\mathrm{C}_{0}(2^{-j})^{\frac{2+p}{1+p-\mu}}}{L_{1/2^{j+1}}[|Du_{j}|]} \leq \frac{\mathrm{C}_{0}}{j}.
   \end{equation}

  \item Combining \eqref{Se4:eq4}, \eqref{Se4:eq2}, and the bounds $ 0< \mathcal{M}_{1} \leq \lambda_{0}(x,t) \leq  \mathcal{M}_{2} $, we obtain
 \begin{equation}\label{Se4:eq5}
   \|\widetilde{\lambda}_{0}(x,t) (v_{j})_{+}^{\mu}\|_{L^{\infty}(Q_{1})} \leq \frac{\mathcal{M}_{2}\mathrm{C}_{0}^{\mu}}{j^{1+p}}.
 \end{equation}
\end{itemize}

Finally, combining \eqref{Se4:eq4}, \eqref{Se4:eq5}, and {\it priori estimate} from $ \mathcal{T}_{p}(Q_{1}) $, we find
\begin{align*}
 0 < \gamma_{j} \leq L_{1/2}[|Dv_{j}|]
 &\leq \mathrm{C} \bigg(\|v_{j}\|_{L^{\infty}(Q_{1})} + \|\widetilde{\lambda}_{0}(x,t) (v_{j})_{+}^{\mu}\|^{\frac{1}{1+p}}_{L^{\infty}(Q_{1})}\bigg) \\
 &\leq  \mathrm{C} \bigg( \frac{\mathrm{C}_{0}}{j} + \frac{(\mathcal{M}_{2}\mathrm{C}_{0}^{\mu})^{\frac{1}{1+p}}}{j} \bigg)
   \rightarrow 0 \quad \text{as} \quad j \rightarrow \infty,
\end{align*}
which leads to a contradiction.
\end{proof}

\begin{remark}
Compared to Theorem \ref{Thm3}, we adopt the strategy from the proof of Theorem \ref{Thm1}. This approach eliminates the need to restrict $ \mu \in (0,\frac{1}{1+p}]$ and dispenses with the gradient estimate provided in Proposition \ref{Se2:Pro1}. However, here it is necessary to impose constraints on the class of solutions.
\end{remark}

In conclusion, we will address an improved regularity estimate close to free boundary
points by making use of a Harnack-type inequality for viscosity solutions to \eqref{DCP} in $t-$slices of the parabolic domain. Such a tool plays a decisive role in obtaining sharp estimates close to free boundary points (cf. Choe-Weiss's manuscript \cite[Proposition 3.2]{CW03}).

\begin{Theorem}[{\bf Sharp regularity in temporal levels}]
\label{Sec6:thm6.1}

Let $u$ be a nonnegative and bounded viscosity solution to  \eqref{DCP}. Suppose that $ F$ satisfies \hyperref[F1]{\bf (F1)} and $ 0 \leq \partial_{t} u \leq c_{0}(x,t)u^{\mu} $ for $ c_{0} $ a nonnegative bounded function(in the viscosity sense). Then, for any $\tau>0$, there exists a positive universal constant $\mathrm{C} = \mathrm{C}(n,\lambda, \Lambda, p,\mu,\tau,\mathcal{M}_2)>0$
such that for every \(x_{0}\in B_1\) satisfying \(B_{\tau}(x_{0})\subset B_1\) and every \(r\leq \tfrac{\tau}{2}\), the following estimate holds, for all $ t \in ( t_{0} - r^{\theta}, t_{0}]$
\begin{equation}\label{eq:local_sup_bound}
\sup_{B_{r}(x_{0})} u(\cdot, t)
\;\leq\;
\mathrm{C} \max\!\left\{
\inf_{B_{r}(x_{0})} u(\cdot, t),\, r^{\frac{2+p}{1+p-\mu}}
\right\}.
\end{equation}
\end{Theorem}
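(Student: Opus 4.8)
The plan is to follow the classical Choe--Weiss philosophy (cf. \cite[Proposition 3.2]{CW03}), adapted to the degenerate fully nonlinear setting via the intrinsic scaling already used throughout Section \ref{Section 3}. The dichotomy driving the argument is whether, at a given time slice, the solution is ``large'' (comparable to $r^{\frac{2+p}{1+p-\mu}}$) or ``small'' on the ball $B_r(x_0)$. If $\sup_{B_r(x_0)} u(\cdot,t) \le r^{\frac{2+p}{1+p-\mu}}$, then \eqref{eq:local_sup_bound} holds trivially with $\mathrm{C}=1$. So we may assume $\sup_{B_r(x_0)} u(\cdot,t) > r^{\frac{2+p}{1+p-\mu}}$; the goal becomes a genuine Harnack comparison $\sup_{B_r(x_0)} u(\cdot,t) \le \mathrm{C}\inf_{B_r(x_0)} u(\cdot,t)$. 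First I would normalize: set $M := \sup_{B_r(x_0)} u(\cdot,t)$ and consider the rescaled function $w(y,s) := \frac{1}{M} u\!\left(x_0 + r y,\, t + M^{-p} r^{2} s\right)$ on $Q_1$ (or a suitable half-cylinder $Q_1^-$), which satisfies a degenerate equation of the same type $|Dw|^p \widetilde F(D^2 w) - w_s = \widetilde\lambda_0(y,s)\, w_+^\mu$ with a forcing term whose $L^\infty$-norm is controlled by $\mathcal{M}_2 M^{\mu-1-p} r^{2+p} \le \mathcal{M}_2$ (since $M > r^{\frac{2+p}{1+p-\mu}}$ forces $M^{1+p-\mu} > r^{2+p}$, i.e.\ $M^{\mu-1-p} r^{2+p} < 1$). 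Thus $w$ solves a uniformly controlled degenerate parabolic equation with $0 \le w \le 1$ on $Q_1$ and $\sup_{B_{1}} w(\cdot,0) = 1$ (after translating the time origin to the slice under consideration).

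Next I would invoke the Harnack inequality for nonnegative viscosity solutions of degenerate fully nonlinear parabolic equations with bounded right-hand side --- this is where the hypothesis $0 \le \partial_t u \le c_0(x,t) u^\mu$ enters, since monotonicity in time lets us pass from a full-cylinder Harnack inequality to a single-slice statement by comparing $u(\cdot,t)$ on $B_r(x_0)$ with its values on a forward time-cylinder where the parabolic Harnack inequality of \cite{CC95}-type (adapted to the $|Du|^p$ degeneracy, e.g.\ via the Krylov--Safonov machinery available for such operators, or via the weak Harnack estimates used in establishing Proposition \ref{Se2:Prop2}) applies. Concretely: the spatial monotonicity of the sup/inf combined with $\partial_t u \ge 0$ means the infimum over $B_r(x_0)$ at time $t$ dominates what a backward Harnack chain would produce, and the upper bound $\partial_t u \le c_0 u^\mu$ controls the growth forward in time so that the forcing term stays of order $r^{2+p}M^{\mu}$ in the rescaled picture. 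Applying the Harnack inequality to $w$ on the normalized cylinder then yields $\sup_{B_{1}} w(\cdot,0) \le \mathrm{C}\big(\inf_{B_{1}} w(\cdot,0) + \|\text{forcing}\|_{L^\infty}^{\frac{1}{1+p}}\big) \le \mathrm{C}\big(\inf_{B_{1}} w(\cdot,0) + 1\big)$, and unwinding the scaling gives $M \le \mathrm{C}\big(\inf_{B_r(x_0)} u(\cdot,t) + M\, r^{?}\big)$; but more carefully one keeps the forcing term as an additive $\mathcal{M}_2^{\frac{1}{1+p}}$ on the normalized scale, which translates back to an additive $\mathrm{C}\, r^{\frac{2+p}{1+p-\mu}}$, exactly producing the $\max$ on the right of \eqref{eq:local_sup_bound}.

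The main obstacle I anticipate is the Harnack inequality itself: a clean, citable parabolic Harnack estimate for nonnegative viscosity solutions of $|Du|^p F(D^2 u) - u_t = f$ with $f \in L^\infty$ may not be available off the shelf in the degenerate case $p > 0$, precisely because the equation loses uniform parabolicity where $Du = 0$. The workaround --- and this is the technically delicate point --- is to exploit that along a nonnegative solution staying away from its own zero set one has a lower gradient bound only in a weak averaged sense, so instead of a pointwise Harnack one should run an argument in the spirit of the ABP/barrier estimates already developed for this model: use the non-degeneracy (Theorem \ref{Thm2}) to say that if $w$ is not too small somewhere in $B_{1/2}$ then it must be bounded below on a definite sub-ball, and use the sup-barrier $\Phi$ from the proof of Theorem \ref{Sec3:thm1} together with the comparison principle (Lemma \ref{Se2:lem1}) to propagate positivity across $B_r(x_0)$. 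In other words, the ``Harnack'' here should be read as a combination of the improved regularity estimate (upper control) and non-degeneracy (lower control) at the single time level --- which is consistent with the remark in the paper that this tool ``plays a decisive role in obtaining sharp estimates close to free boundary points.'' Carefully tracking the exponents through the intrinsic rescaling to land exactly on $r^{\frac{2+p}{1+p-\mu}}$ in the error term is the bookkeeping one must not get wrong.
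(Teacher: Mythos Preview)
Your proposal has the right instinct (Harnack at a time slice plus intrinsic scaling), but it misses the paper's two decisive ingredients and contains a scaling error.

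\textbf{The elliptic reduction.} You struggle with a parabolic Harnack inequality for $|Du|^{p}F(D^{2}u)-u_{t}=f$, correctly noting that none is available off the shelf in the degenerate regime. The paper sidesteps this entirely: the hypothesis $0\le \partial_{t}u\le c_{0}(x,t)u^{\mu}$ is used \emph{not} to propagate positivity in time but to rewrite the equation at each fixed $t$ as the \emph{elliptic} problem
\[
|Du|^{p}F(D^{2}u)=u_{t}+\lambda_{0}u^{\mu}\chi_{\{u>0\}}=:f(x,t),
\]
with $0\le f\le (\|c_{0}\|_{L^{\infty}}+\mathcal{M}_{2})\|u\|_{L^{\infty}}^{\mu}$ bounded. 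Then the elliptic Harnack inequality of \cite[Theorem~8.3]{BJrDaSR2023} applies directly on each time slice to the rescaled function $u_{r}(x,t)=r^{-\frac{2+p}{1+p-\mu}}u(x_{0}+rx,t_{0}+r^{\theta}t)$, yielding
\[
\sup_{B_{1/2}}u_{r}(\cdot,t)\le \mathrm{C}\Big(\inf_{B_{1/2}}u_{r}(\cdot,t)+\big(\sup_{B_{1}}u_{r}(\cdot,t)\big)^{\frac{\mu}{1+p}}\Big).
\]
Your proposed workaround via Theorem~\ref{Thm2} and the barrier $\Phi$ would not produce a Harnack-type comparison; non-degeneracy gives a lower bound for the supremum, not for the infimum.

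\textbf{The dyadic iteration.} The Harnack inequality above carries $\big(\sup_{B_{1}}u_{r}\big)^{\mu/(1+p)}$ on the right, over a ball \emph{twice as large} as the one on the left; it cannot be absorbed in a single step. The paper iterates over scales $r_{k}=c/2^{k}$ from $r_{0}\sim\tau$ down to $r_{n_{0}}=r$, splitting into two cases at each step depending on whether the $\inf$ term or the $(\sup)^{\mu/(1+p)}$ term dominates. Because $\mu/(1+p)<1$, repeated absorption of the second term against the boundedness of $u$ at scale $\tau$ produces the clean bound $r_{n_{0}}^{-\frac{2+p}{1+p-\mu}}\sup_{B_{r_{n_{0}}}}u\le \mathrm{C}$. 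Your one-shot argument normalizing by $M=\sup_{B_{r}(x_{0})}u(\cdot,t)$ does not account for this larger-ball dependence, and your claim that the additive forcing ``translates back to $\mathrm{C}\,r^{\frac{2+p}{1+p-\mu}}$'' is incorrect: in your scaling the additive constant in the $w$-picture becomes $M$ times that constant in the $u$-picture, which is circular.
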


\begin{proof}
We introduce the rescaled function
\begin{equation*}
u_{r}(x,t):=  \frac{u(x_{0}+rx,\, t_{0}+r^{\theta}t)}{r^{\frac{2+p}{1+p-\mu}}},
\end{equation*}
and a straightforward computation shows that $u_{r}$ solves
\begin{equation*}
|Du_{r}|^{p}\, \widetilde{F}(D^{2}u_{r}) - (u_{r})_{t}
= \widetilde{\lambda}_{0}(x,t)\, u_{r}^{\mu}\, \chi_{\{u_{r}>0\}}
\quad \text{in } \quad Q_{1},
\end{equation*}
where
\begin{equation*}
\widetilde{\lambda}_{0}(x,t):= \lambda_{0}(x_{0}+rx,\, t_{0}+ r^{\theta}t),
\qquad  \text{and} \qquad
\widetilde{F}(\mathrm{X}):= r^{\frac{p-2\mu}{1+p-\mu}} F\!\left(r^{\frac{2\mu-p}{1+p-\mu}}\mathrm{X}\right).
\end{equation*}
It is immediate that $\widetilde{F}$ still satisfies \hyperref[F1]{\textbf{(F1)}}.
Using the assumption $0 \leq \partial_{t}u \leq c_{0}(x,t)\, u^{\mu}$ and the boundedness of $c_{0}$, $\lambda_{0}$, and $u$, we obtain
\begin{equation*}
|Du_{r}|^{p}\, \widetilde{F}(D^{2}u_{r})
= (u_{r})_{t} + \widetilde{\lambda}_{0}(x,t)\, u_{r}^{\mu}\, \chi_{\{u_{r}>0\}}
=: f \leq \mathrm{C}
\quad \text{in} \quad  Q_{1}.
\end{equation*}

Invoking the Harnack inequality for this class of equations, an application of \cite[Theorem 8.3]{BJrDaSR2023} yields that for all
$t \in (-(\tfrac12)^{\theta},\, 0]$,
\begin{equation}
\label{Sec6:thm6.1:eq1}
\sup_{B_{1/2}} u_{r}(\cdot,t)
\leq \mathrm{C}\Big\{
\inf_{B_{1/2}} u_{r}(\cdot,t)
+ (\widetilde{\lambda}_{0}^{+}(1+p))^{\frac{1}{1+p}}
\big(\sup_{B_{1}} u_{r}(\cdot,t)\big)^{\frac{\mu}{1+p}}
\Big\}.
\end{equation}

For $0 < \mu < 1$ and $\frac{\tau}{4} \leq \mathrm{c} \leq \frac{\tau}{2}$, we iterate \eqref{Sec6:thm6.1:eq1} with respect to
$r_{k} := \frac{\mathrm{c}}{2^{k}}$ for $0 \leq k \leq n_{0}$, where $n_{0} \in \mathbb{N}$ is chosen so that $r_{n_{0}} = r$.
We distinguish two cases.

\smallskip
\noindent
\textbf{Case 1.}
For $1 \leq k \leq n_{0}$ we have
\begin{equation*}
r_{k}^{-\frac{2+p}{1+p-\mu}}
\inf_{B_{r_{k}}(x_{0})} u(\cdot,t)
\leq
(\widetilde{\lambda}_{0}^{+}(1+p))^{\frac{1}{1+p}}
\left(
r_{k-1}^{-\frac{2+p}{1+p-\mu}}
\sup_{B_{r_{k-1}}(x_{0})} u(\cdot,t)
\right)^{\frac{\mu}{1+p}}.
\end{equation*}
Iterating \eqref{Sec6:thm6.1:eq1}, we arrive at
\begin{equation*}
r_{n_{0}}^{-\frac{2+p}{1+p-\mu}}
\sup_{B_{r_{n_{0}}}(x_{0})} u(\cdot,t)
\leq
\mathrm{C}(n,p,\lambda,\Lambda,\mu,\mathcal{M}_{2})
\left(
r_{0}^{-\frac{2+p}{1+p-\mu}}
\sup_{B_{r_{0}}(x_{0})} u(\cdot,t)
\right)^{\frac{\mu}{1+p}}
\leq \mathrm{C},
\end{equation*}
where the final inequality follows from the boundedness of $u$.
This proves \eqref{eq:local_sup_bound} in this case.

\smallskip
\noindent
\textbf{Case 2.}
Assume that for some $k_{0} \leq n_{0}$,
\begin{equation*}
r_{k_{0}}^{-\frac{2+p}{1+p-\mu}}
\inf_{B_{r_{k_{0}}}(x_{0})} u(\cdot,t)
\geq
(\widetilde{\lambda}_{0}^{+}(1+p))^{\frac{1}{1+p}}
\left(
r_{k_{0}-1}^{-\frac{2+p}{1+p-\mu}}
\sup_{B_{r_{k_{0}-1}}(x_{0})} u(\cdot,t)
\right)^{\frac{\mu}{1+p}},
\end{equation*}
and for all $k_{0} < k \leq n_{0}$,
\begin{equation*}
r_{k}^{-\frac{2+p}{1+p-\mu}}
\inf_{B_{r_{k}}(x_{0})} u(\cdot,t)
\leq
(\widetilde{\lambda}_{0}^{+}(1+p))^{\frac{1}{1+p}}
\left(
r_{k-1}^{-\frac{2+p}{1+p-\mu}}
\sup_{B_{r_{k-1}}(x_{0})} u(\cdot,t)
\right)^{\frac{\mu}{1+p}}.
\end{equation*}
Then
\begin{equation*}
r_{n_{0}}^{-\frac{2+p}{1+p-\mu}}
\sup_{B_{r_{n_{0}}}(x_{0})} u(\cdot,t)
\leq
\mathrm{C}_{1}\,
r_{k_{0}}^{-\frac{2+p}{1+p-\mu}}
\inf_{B_{r_{k_{0}}}(x_{0})} u(\cdot,t)
\leq
\mathrm{C}_{1}\,
r_{n_{0}}^{-\frac{2+p}{1+p-\mu}}
\inf_{B_{r_{n_{0}}}(x_{0})} u(\cdot,t),
\end{equation*}
where $\mathrm{C}_{1}=\mathrm{C}_{1}(n,p,\lambda,\Lambda,\mu,\mathcal{M}_{2})>0$.
Hence \eqref{eq:local_sup_bound} also follows in this case.
\end{proof}

\begin{remark}
In particular, if $ (x_{0}, t_{0}) \in \partial \{ u>0  \} \cap Q_{T} $, then
\begin{equation*}
\sup_{Q_{r}(x_{0})} u \leq \mathrm{C} r^{\frac{2+p}{1+p-\mu}}
\end{equation*}
for all $ 0 < r < \min \left\{ 1, \frac{\text{dist}_{p}((x_{0},t_{0}), \partial Q_{T})}{2}    \right\}$. Such an approach can be regarded as an alternative perspective of Theorem \ref{Thm1}, and it is likely to be of independent interest.
\end{remark}

\subsection{Future directions}In this subsection, we remark that over the last decade, the regularity theory for nonlinear equations involving gradient-degenerate terms has received considerable attention; see, for instance, \cite{WYJ24, WJ24, ART15, DaSRic2020, BJrDaSR2023, CL13} and the references therein. More recently, dead-core phenomena in elliptic equations have attracted increasing interest due to their connections with reaction-diffusion processes, including (degenerate or singular) fully nonlinear systems \cite{AT24, WJ25}, infinity-Laplacian type equations \cite{ALT16, daSSS25, JSNS24}, and the Grad-Mercier equation \cite{CFR25}.

In conclusion, we propose three open problems that may be of independent interest.

\begin{Question}
Consider a fully nonlinear degenerate parabolic system with strong absorption:
\begin{equation*}
\left\{
     \begin{aligned}
     & |Du|^{p} F(D^{2}u) - u_{t} = (v_{+})^{\lambda_{1}} \quad \text{in} \quad Q_{T},   \\
     & |Dv|^{q} G(D^{2}v) - v_{t} = (u_{+})^{\lambda_{2}} \quad \text{in} \quad Q_{T},
     \end{aligned}
\right.
\end{equation*}
where $0 \leq p, q < \infty$, and the parameters $\lambda_{1}, \lambda_{2}$ satisfy suitable structural conditions. Can the results of \cite{AT24, SLR21} be extended to this framework? Even in the case $p = q = 0$, the corresponding theory remains unexplored.
\end{Question}

\begin{Question}
Consider a degenerate parabolic problem governed by the infinity-Laplacian with strong absorption:
\begin{equation*}
  \Delta_{\infty}^{h} u - u_{t} = \lambda_{0}(x,t) u^{\mu}\chi_{\{u>0\}}(x,t) \quad \text{in} \quad Q_{T} := Q \times (0,T),
\end{equation*}
where
\begin{equation*}
  \Delta_{\infty}^{h} u := |Du|^{h-3} \Delta_{\infty} u, \quad h \geq 1.
\end{equation*}
When $h = 3$, this model can be interpreted as the parabolic analogue of the framework introduced by Ara\'{u}jo et al.~\cite{ALT16}. It remains an open question whether one can derive analogous improved regularity estimates for more general parabolic equations of $\infty$-Laplacian-type.
\end{Question}

\begin{Question} Finally, inspired by~\cite{DDS25}, consider the degenerate parabolic problem driven by the normalized $p$-Laplacian with a Hamiltonian term and strong absorption:
\begin{equation*}
  |Du|^{q}\!\left(\Delta_{p}^{\mathrm{N}} u + \mathcal{B}(x,t)\!\cdot\! Du\right) - u_{t}
  = \lambda_{0}(x,t)\,u^{\mu}\chi_{\{u>0\}}(x,t)
  \quad \text{in } \quad Q_{T} := Q \times (0,T),
\end{equation*}
where $q \ge 0$ and $1 < p < \infty$, and
\begin{equation*}
  \Delta_{p}^{\mathrm{N}} u(x,t)
  := \Delta u(x,t) + (p-2)\,\Delta_{\infty}^{\mathrm{N}} u(x,t),
  \qquad p \ge 2,
\end{equation*}
is the normalized $p$-Laplacian. Here
\[
\Delta_{\infty}^{\mathrm{N}} u
= \frac{Du^{\mathrm{T}} D^{2}u\,Du}{|Du|^{2}}
\]
denotes the normalized $\infty$-Laplacian.

These considerations naturally raise the question of whether the results established in this manuscript extend to such a broad class of degenerate normalized parabolic models. Furthermore, one may ask which additional analytical tools are required to develop a comprehensive dead-core theory for these equations.
\end{Question}

Therefore, these problems appear to be promising directions for future research.

\vspace{2mm}

\appendix

\section{Proof of Proposition \ref{Se2:Prop2} } \label{Appendix:A}

In this appendix, we are devoted to presenting the proof of Proposition \ref{Se2:Prop2}. Although the proof follows roughly the same lines as the one in \cite[Lemmas~3.1, 3.2]{A20} or \cite[Lemmas~4.3, 4.5]{DDS25}, certain computational details still differ from \cite[Lemmas~3.1, 3.2]{A20}. In particular, we introduce a more general auxiliary function to establish $ C_{t}^{1/2+p} $ estimates. Hereafter, for $ r > 0 $, we recall the notation 
\begin{equation*}
\mathcal{Q}^-_r:=B_{r} \times (-r^{2}, 0].
\end{equation*}

Firstly, we shall prove the H\"{o}lder regularity of $ u $ with respect to the spatial variable.

\begin{Lemma}\label{App:lem1}
Under the assumption of Proposition \ref{Se2:Prop2}. Then for any $ \beta \in (0,1) $, there exists a positive constant $ \mathrm{C} = \mathrm{C}(n, \beta, \lambda, \Lambda, p) $ such that for any $ x,y \in B_{7/8} $ and $ t \in (-(7/8)^{2}, 0] $, we have
\begin{equation*}
  |u(x,t) - u(y,t)| \leq \mathrm{C} \left(\|u\|_{L^{\infty}(\mathcal{Q}_{1}^{-})} + \|u\|_{L^{\infty}(\mathcal{Q}_{1}^{-})}^{\frac{1}{1+p}} + \|f\|_{L^{\infty}(\mathcal{Q}_{1}^{-})}^{\frac{1}{1+p}} \right)|x-y|^{\beta}.
\end{equation*}
\end{Lemma}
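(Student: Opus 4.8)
The plan is to prove this interior spatial Hölder estimate via the Ishii–Lions method, which is the standard doubling-of-variables technique adapted to degenerate parabolic equations. Fix $\beta \in (0,1)$ and suppose, toward a contradiction (or equivalently, to establish the estimate directly), that for a suitable large constant $L$ and a small constant $M$ the auxiliary function
\[
\Psi(x,y,t) := u(x,t) - u(y,t) - L\,\omega(|x-y|) - M\bigl(|x-x_{0}|^{2} + |y-x_{0}|^{2}\bigr) - \frac{\sigma}{(r^{2}+t)}
\]
attains a positive maximum over $\overline{\mathcal{Q}_{r}^{-}} \times \overline{\mathcal{Q}_{r}^{-}}$ at some interior point $(\bar{x},\bar{y},\bar{t})$ with $\bar{x}\neq\bar{y}$. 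Here $\omega(s) = s^{\beta} - \gamma_{0} s^{\beta_{1}}$ is the usual concave modulus with a corrective term ($\beta < \beta_{1} \le 1$), chosen so that $\omega'(s)>0$, $\omega''(s) < -c\, s^{\beta-2}$ near $s=0$; the term $\frac{\sigma}{(r^{2}+t)}$ forces the maximum away from the initial time slice and is sent to zero at the end; and the quadratic localization terms confine $\bar{x},\bar{y}$ to the interior of $B_{7/8}$ once $M$ is chosen appropriately in terms of $\|u\|_{L^{\infty}}$. I would first rule out $\bar{x}=\bar{y}$ (impossible if $L$ is large, since then $\Psi \le 2\|u\|_{\infty} - $ positive terms could not be positive), so the test functions are smooth near the maximum.

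The core step is to apply the parabolic theorem on sums (Crandall–Ishii–Lions / Jensen–Ishii lemma for viscosity solutions of parabolic equations) at $(\bar{x},\bar{y},\bar{t})$. This produces a common time-derivative value $b$ and matrices $X, Y \in \mathrm{Sym}(n)$ with $X \le Y$ (in fact with the standard bound $\begin{pmatrix} X & 0 \\ 0 & -Y\end{pmatrix} \le D^{2}\phi + \varepsilon(D^2\phi)^2$ for the penalization $\phi$), such that $u$ being a subsolution in the first variable and a supersolution in the second yields
\[
|q_{x}|^{p}F(X) - b \ge -\|f\|_{\infty}, \qquad |q_{y}|^{p}F(Y) - b \le \|f\|_{\infty},
\]
where $q_{x} = L\omega'(|\bar{x}-\bar{y}|)\hat{e} + 2M(\bar{x}-x_{0})$ and $q_{y} = L\omega'(|\bar{x}-\bar{y}|)\hat{e} - 2M(\bar{y}-x_{0})$ with $\hat{e}$ the unit vector $\tfrac{\bar{x}-\bar{y}}{|\bar{x}-\bar{y}|}$. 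Subtracting and using (F1) (the Pucci bound $F(X)-F(Y)\le \mathscr{P}^{+}_{\lambda,\Lambda}(X-Y)$, combined with the matrix inequality which gives a large negative eigenvalue of $X-Y$ in the direction $\hat{e}$ of size $\sim L\,\omega''(|\bar{x}-\bar{y}|)$), one arrives at an inequality of the shape
\[
c_{1}\,(L|\omega'|)^{p}\,\lambda\, |L\,\omega''| \;\lesssim\; \Lambda\,(\cdots)\,\Bigl(\text{lower-order terms in } M, \varepsilon\Bigr) + 2\|f\|_{\infty} + (\text{gradient-size corrections}).
\]
Since $\omega''$ blows up like $|\bar x-\bar y|^{\beta-2}$ while the competing terms grow only polynomially, choosing $L$ large (depending on $n,\beta,\lambda,\Lambda,p$ and on $\|u\|_{\infty}+\|u\|_{\infty}^{1/(1+p)}+\|f\|_{\infty}^{1/(1+p)}$) produces a contradiction, forcing $\Psi \le 0$, and then letting $\sigma\to 0$, $M\to$ its fixed small value, gives the claimed inequality with $\mathrm{C}$ independent of $u$ in the stated normalized form.

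The main obstacle is the degeneracy of the operator, i.e. the factor $|q|^{p}$: when the gradient $q_{x}$ or $q_{y}$ is small the ellipticity is lost and the contradiction mechanism fails. I would handle this by a case split at the maximum point. If $L\,\omega'(|\bar{x}-\bar{y}|) \ge |\bar{x}-\bar{y}|^{\beta_{2}}$ for a suitable auxiliary exponent (equivalently, if $|\bar x - \bar y|$ is not too close to the value where $\omega'$ degenerates), then $q_{x},q_{y}$ are bounded below and the ellipticity factor $(L|\omega'|)^{p}$ dominates, so the argument above closes. In the complementary regime, the increment $u(\bar x,\bar t)-u(\bar y,\bar t)$ is itself already controlled by a power of $|\bar x-\bar y|$ larger than $\beta$, so the Hölder bound holds trivially there. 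One also has to check that $b$ (the common time derivative from the lemma) genuinely cancels — which it does since both copies of $u$ are evaluated at the same time and the penalization's $t$-dependence is only through the monotone term $\sigma/(r^2+t)$, whose derivative has a favorable sign — and that the quadratic terms $2M(\bar x - x_0)$ etc. can be absorbed; both are routine once $M$ is fixed small relative to $L$. Finally, the passage from $\mathcal{Q}_{r}^{-}$ with $r=7/8$ to general $r$ follows by the standard parabolic rescaling $u(x,t)\mapsto u(x_0 + \rho x, t_0 + \rho^2 t)$, which preserves the class of equations (up to multiplying $f$ by $\rho^{2}/\rho^{? }$ — here one tracks the homogeneity $|Du|^pF(D^2u)$ scales like $\rho^{-(2+p)}$, $u_t$ like $\rho^{-2}$, forcing the parabolic scaling $t\sim\rho^2$ after also rescaling $u$), as already noted in the remark following Proposition~\ref{Se2:Prop2}.
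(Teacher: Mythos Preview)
Your overall strategy is correct and matches the paper's: both use the Ishii--Lions doubling of variables with a concave spatial modulus plus quadratic localization, then derive a contradiction from the two viscosity inequalities. Two points are worth flagging, though.

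First, the paper uses the plain modulus $\varphi(s)=s^{\beta}$ with no corrective term, and localizes in time by $\tfrac{L_{1}}{2}(t-t_{0})^{2}$ rather than a blow-up penalization $\sigma/(r^{2}+t)$. Since $\varphi''(s)=\beta(\beta-1)s^{\beta-2}<0$ is already strictly negative, no correction $-\gamma_{0}s^{\beta_{1}}$ is needed here; that refinement is reserved for the Lipschitz step (Lemma~\ref{App:lem2}), where $\varphi(s)=s-\kappa_{0}s^{\upsilon}$.

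Second, your stated ``main obstacle''---degeneracy when the gradient is small---is misidentified for this lemma. Because $\beta<1$, one has $\varphi'(s)=\beta s^{\beta-1}\to\infty$ as $s\to 0$, so at the maximum point the gradients satisfy $|a_{1}|,|a_{2}|\sim L_{2}\beta|\bar x-\bar y|^{\beta-1}$, which is \emph{large}. With $p\ge 0$, the factor $|q|^{p}$ therefore amplifies the dominant negative term rather than killing it, and your proposed case split is vacuous. The actual subtlety is subtracting $|a_{1}|^{p}F(\mathrm{X})$ from $|a_{2}|^{p}F(\mathrm{Y})$ when $a_{1}\neq a_{2}$; the paper handles this by decomposing the difference into four pieces $D_{1},\dots,D_{4}$ (principal term, two Lipschitz-of-$F$ terms, and the $|a_{1}|^{p}-|a_{2}|^{p}$ mismatch) and estimating each separately, whereas you only gesture at this as ``gradient-size corrections''.
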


\begin{proof}
In the sequel, we omit most of the details, focusing on the main differences with respect to the argument in \cite[Lemma 3.1]{A20}. We split the proof into four steps:

\vspace{2mm}

{\bf Step 1}. We define the auxiliary function
\begin{equation}\label{App:eq1}
  \Phi(x,y,t):= u(x,t)- u(y,t) - L_{2}\varphi(|x-y|) - \frac{L_{1}}{2} |x-x_{0}|^{2} - \frac{L_{1}}{2}|y-y_{0}|^{2}- \frac{L_{1}}{2}(t-t_{0})^{2}
\end{equation}
for any positive constants $ L_{1}, L_{2} $, where $ \varphi(s) = s^{\beta} $. We want to show that $  \Phi(x,y,t) \leq 0 $ for all $ (x,y) \in \overline{B_{7/8}} \times \overline{B_{7/8}} $ and all $ t \in [-(7/8)^{2}, 0) $. Now, we argue by contradiction. Assume that there exists $ (\overline{x}, \overline{y}, \overline{t}) \in \overline{B_{7/8}} \times \overline{B_{7/8}} \times [-(7/8)^{2}, 0) $ such that the function $ \Phi $ attains its maximum and $ \Phi(\overline{x}, \overline{y}, \overline{t}) > 0 $.

Note that $ \overline{x} \neq \overline{y} $, otherwise, this contradicts the positivity of $ \Phi $. In addition, by taking
\begin{equation*}
  L_{1} \geq \frac{280\|u\|_{L^{\infty}(\mathcal{Q}_{1}^{-})}}{\min \big\{d((x_{0}, t_{0}), \partial \mathcal{Q}_{7/8}^{-}), d((y_{0}, t_{0}), \partial \mathcal{Q}_{7/8}^{-})  \big\}^{2} },
\end{equation*}
so that
\begin{equation*}
  |\overline{x}-x_{0}| + |\overline{y}-y_{0}| + |\overline{t}-t_{0}| \leq 2 \bigg(\frac{4\|u\|_{L^{\infty}(\mathcal{Q}_{1}^{-})}}{L_{1}} \bigg)^{\frac{1}{2}}  \leq \frac{\min \big\{d((x_{0}, t_{0}), \partial \mathcal{Q}_{7/8}^{-}), d((y_{0}, t_{0}), \partial \mathcal{Q}_{7/8}^{-})  \big\}}{2}
\end{equation*}
and hence $ (\overline{x}, \overline{y}) \in B_{7/8} \times B_{7/8}   $ and $ \overline{t} \in (-(7/8)^{2}, 0) $.

\vspace{2mm}

{\bf Step 2}. By applying the Ishii-Lions Lemma \cite[Theorem 3.2]{CIL92} and using argument similar to \cite[Lemma 6.1]{A20}, we have
\begin{equation}\label{App:eq2}
  \big(\sigma + L_{1} (\overline{t}-t_{0}), a_{1}, \mathrm{X}+ L_{1} \textbf{I} {\rm{d}_{n}}   \big)   \in \overline{\mathcal{P}}^{2,+} u(\overline{x}, \overline{t}) \ \ \text{and} \ \ \big(\sigma, a_{2}, \mathrm{Y}- L_{1} \textbf{I}{\rm{d}_{n}}    \big)   \in \overline{\mathcal{P}}^{2,-} u(\overline{y}, \overline{t}),
\end{equation}
where $ \sigma > 0 $ and
\begin{equation}\label{App:eq3}
\left\{
     \begin{aligned}
     & a_{1} = L_{2} \varphi'(|\overline{x}-\overline{y}|)\frac{\overline{x}-\overline{y}}{|\overline{x}-\overline{y}|}+ L_{1}(\overline{x}-x_{0}),        \\
     & a_{2} = L_{2} \varphi'(|\overline{x}-\overline{y}|)\frac{\overline{x}-\overline{y}}{|\overline{x}-\overline{y}|}- L_{1}(\overline{y}-y_{0}).             \\
     \end{aligned}
     \right.
\end{equation}
Besides, a direction calculation yields that
\begin{equation}\label{App:eq4}
  |a_{1}|, |a_{2}| \sim L_{2} \beta |\overline{x}-\overline{y}|^{\beta-1}
\end{equation}
provided $ L_{2} > \frac{L_{1}2^{4-\beta}}{\beta} $.

\vspace{2mm}

{\bf Step 3}. From \eqref{App:eq2}, We obtain the following viscosity inequality:
\begin{equation}\label{App:eq5}
  |a_{1}|^{p} F(\mathrm{X}+L_{1}\textbf{I}{\rm{d}_{n}}) - |a_{2}|^{p}F(\mathrm{Y}-L_{1}\textbf{I}{\rm{d}_{n}})+ 2(L_{1}+\|f\|_{L^{\infty}(\mathcal{Q}_{1}^{-})})  \geq 0.
\end{equation}
The left side of the above inequality can be written as
\begin{align}\label{App:eq6}
\begin{split}
& \underbrace{|a_{1}|^{p} [F(\mathrm{X}+L_{1}\textbf{I}{\rm{d}_{n}})  - F(\mathrm{Y}+L_{1}\textbf{I}{\rm{d}_{n}})]}_{:=D_{1}} + \underbrace{|a_{1}|^{p} [F(\mathrm{Y}+L_{1}\textbf{I}{\rm{d}_{n}})-F(\mathrm{Y})]}_{:=D_{2}}    \\
& + \underbrace{(|a_{1}|^{p}-|a_{2}|^{p})F(\mathrm{Y})}_{:=D_{3}}  +  \underbrace{|a_{2}|^{p} [F(\mathrm{Y})- F(\mathrm{Y}-L_{1}\textbf{I}{\rm{d}_{n}})]}_{:=D_{4}} +  2(L_{1}+\|f\|_{L^{\infty}(\mathcal{Q}_{1}^{-})})  \geq 0.
\end{split}
\end{align}

Next, we shall derive an upper bound of \eqref{App:eq6}. The remaining part mainly involves estimating four terms $ D_{i}, i=1,2,3,4 $.

\vspace{2mm}

{\boxed{\text{The estimate of} \ D_{1}.}} By applying the uniformly ellipticity of $ F $ and $F(\mathrm{O}_n) = 0$(\hyperref[F1]{\bf (F1)}), it reads
\begin{equation}\label{App:eq7}
  D_{1} \leq 8|a_{1}|^{p}L_{2} \beta |\overline{x}- \overline{y}|^{\beta-2+(\beta-1)p} \left( \frac{\beta-1}{3-\beta}   \right)  \leq L_{2}^{1+p} \beta^{1+p} |\overline{x}- \overline{y}|^{\beta-2+(\beta-1)p} \left( \frac{\beta-1}{3-\beta}   \right) < 0,
\end{equation}
where we have used $ \xi^{T}(\mathrm{X}-\mathrm{Y})\xi \leq 8L_{2} \beta |\overline{x}- \overline{y}|^{\beta-2} \left( \frac{\beta-1}{3-\beta}   \right) $ in the first inequality.

\vspace{2mm}

{\boxed{\text{The estimate of} \ D_{2}.}} Using \hyperref[F1]{\bf (F1)} again and $ \|\mathrm{Y}\| \leq 4L_{2} \beta |\overline{x}- \overline{y}|^{\beta-2} $(see \cite[Lemma 6.1]{A20}), it infers
\begin{equation}\label{App:eq8}
  D_{2} \leq L_{1} L_{2}^{p} \beta^{p}|\overline{x}- \overline{y}|^{(\beta-1)p}.
\end{equation}

\vspace{2mm}

{\boxed{\text{The estimate of} \ D_{3}.}} From \cite[Lemmas 3.1]{A20}, we have that
\begin{align}\label{App:eq9}
\begin{split}
\big||a_{1}|^{p}  - |a_{2}|^{p} \big| & \leq p\mathrm{C}L_{1} \left( L_{2} \beta |\overline{x}- \overline{y}|^{\beta-1} \right)^{p-1}, \ \  \text{if} \ \ p\geq 1;  \\
& \leq (4L_{1})^{p},  \qquad  \qquad \qquad \qquad \quad  \ \text{if} \ \ 0 \leq p <1,
\end{split}
\end{align}
which together with \hyperref[F1]{\bf (F1)}, yields that
\begin{align}\label{App:eq10}
\begin{split}
D_{3} & \leq \big||a_{1}|^{p}  - |a_{2}|^{p} \big| \|\mathrm{Y}\|   \\
& \leq 4p\mathrm{C}L_{1} L_{2}\beta \left( L_{2} \beta |\overline{x}- \overline{y}|^{\beta-1} \right)^{p-1} |\overline{x}- \overline{y}|^{\beta-2}, \ \  \text{if} \ \ p\geq 1;  \\
& \leq 4^{1+p}L_{1}^{p}L_{2} \beta |\overline{x}- \overline{y}|^{\beta-2},  \qquad  \qquad \qquad \qquad  \quad \ \  \text{if} \ \ 0 \leq p <1.
\end{split}
\end{align}

\vspace{2mm}

{\boxed{\text{The estimate of} \ D_{4}.}} Similar to the estimate of $ D_{2} $, we have that
\begin{equation}\label{App:eq11}
  D_{4} \leq  L_{1} (L_{2}\beta)^{p} |\overline{x}- \overline{y}|^{(\beta-1)p}.
\end{equation}

\vspace{2mm}

{\bf Step 4}. Finally, we combine \eqref{App:eq7}--\eqref{App:eq11} and \eqref{App:eq6} to obtain
\begin{align*}
0 \leq 2(L_{1}+\|f\|_{L^{\infty}(\mathcal{Q}_{1}^{-})}) & +  L_{2}^{1+p} \beta^{1+p} |\overline{x}- \overline{y}|^{\beta-2+(\beta-1)p} \left( \frac{\beta-1}{3-\beta}   \right) + L_{1} L_{2}^{p} \beta^{p}|\overline{x}- \overline{y}|^{(\beta-1)p}   \\
& \ \ + L_{1} (L_{2}\beta)^{p} |\overline{x}- \overline{y}|^{(\beta-1)p}    \\
& \ \ \ \ + \textbf{right hand term of} \ \ \eqref{App:eq10},
\end{align*}
which is a contradiction, provided $ L_{1} $ is large enough.

Thereby, we verify $ \eqref{App:eq1} \leq 0    $. The proof is now complete.
\end{proof}

The following lemma shows that the H\"{o}lder regularity of the viscosity solution to \eqref{Sec2:eq1} can be improved to Lipschitz continuity.

\begin{Lemma}\label{App:lem2}
Under the assumption of Proposition \ref{Se2:Prop2}. For all $ r \in (0, \frac{7}{8}) $, and for all $ x,y \in \overline{B_{r}} $ and $ t \in [-r^{2}, 0] $, it holds
 \begin{equation*}
  |u(x,t) - u(y,t)| \leq \mathrm{C} \left(\|u\|_{L^{\infty}(\mathcal{Q}_{1}^{-})} + \|u\|_{L^{\infty}(\mathcal{Q}_{1}^{-})}^{\frac{1}{1+p}} + \|f\|_{L^{\infty}(\mathcal{Q}_{1}^{-})}^{\frac{1}{1+p}} \right)|x-y|.
\end{equation*}
where $ \mathrm{C} $ is a positive constant depending only on $ p,n, \lambda$ and $\Lambda$.
\end{Lemma}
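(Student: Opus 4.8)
\textbf{Proof proposal for Lemma \ref{App:lem2}.}

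The plan is to upgrade the $C^{0,\beta}_x$ estimate of Lemma \ref{App:lem1} to a Lipschitz estimate by running essentially the same Ishii--Lions doubling argument, but now with the concave modulus $\varphi(s)=s^{\beta}$ replaced by one that behaves linearly near the origin yet has a strictly negative second derivative there, so that the ``good'' term $D_{1}$ produces a genuine gain. Concretely, following \cite[Lemma 3.2]{A20}, I would first rescale so that it suffices to treat $r=7/8$, and then introduce
\[
\Phi(x,y,t):= u(x,t)-u(y,t)-L_{2}\,\varphi(|x-y|)-\frac{L_{1}}{2}|x-x_{0}|^{2}-\frac{L_{1}}{2}|y-y_{0}|^{2}-\frac{L_{1}}{2}(t-t_{0})^{2},
\]
where now $\varphi(s)=s-\omega_{0}s^{\gamma}$ for a suitable $\gamma>1$ and small $\omega_{0}>0$, chosen so that $\varphi$ is increasing and concave on the relevant range, $\varphi(s)\simeq s$, and $\varphi''(s)\simeq -s^{\gamma-2}<0$. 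Assuming for contradiction that $\max\Phi>0$, the interior-penalization choice of $L_{1}$ (depending on $\|u\|_{L^{\infty}}$ and $\operatorname{dist}$ to $\partial\mathcal{Q}^{-}_{7/8}$) again localizes the maximum point $(\overline{x},\overline{y},\overline{t})$ to the interior with $\overline{x}\neq\overline{y}$.

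Next I would apply the parabolic Ishii--Lions lemma exactly as in Step 2 of Lemma \ref{App:lem1} to produce matrices $\mathrm{X},\mathrm{Y}$ with $\mathrm{X}-\mathrm{Y}\le 0$ in a controlled way, and gradient vectors $a_{1},a_{2}$ with $|a_{1}|,|a_{2}|\simeq L_{2}\varphi'(|\overline{x}-\overline{y}|)\simeq L_{2}$. Writing the viscosity inequality as the sum $D_{1}+D_{2}+D_{3}+D_{4}+2(L_{1}+\|f\|_{L^{\infty}})\ge 0$ with the same bracketing as in \eqref{App:eq6}, the key point is that the ellipticity of $F$ now gives
\[
D_{1}\;\le\; |a_{1}|^{p}\,\lambda\cdot(\text{eigenvalue gap})\;\lesssim\;-\,L_{2}^{1+p}\,|\overline{x}-\overline{y}|^{\gamma-2},
\]
which blows up as $|\overline{x}-\overline{y}|\to 0$ when $\gamma<2$, while $D_{2},D_{3},D_{4}$ are each bounded by $C\,L_{1}L_{2}^{p}$ times a power of $|\overline{x}-\overline{y}|$ no more singular than $|\overline{x}-\overline{y}|^{\gamma-2}$ (here one uses $\|\mathrm{Y}\|\lesssim L_{2}\varphi''$ and the estimate on $||a_{1}|^{p}-|a_{2}|^{p}|$ from \cite[Lemma 3.1]{A20}, splitting into $p\ge 1$ and $0\le p<1$). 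Choosing first $L_{2}$ large relative to $L_{1}$ and $\|f\|_{L^{\infty}}^{1/(1+p)}$, the negative term $D_{1}$ dominates, contradicting $\ge 0$; hence $\Phi\le 0$ on $\overline{B_{7/8}}\times\overline{B_{7/8}}\times[-(7/8)^{2},0)$, which upon specializing $x_{0},y_{0},t_{0}$ and using $\varphi(s)\le s$ yields the Lipschitz bound with constant $C(n,\lambda,\Lambda,p)\,(\|u\|_{L^{\infty}}+\|u\|_{L^{\infty}}^{1/(1+p)}+\|f\|_{L^{\infty}}^{1/(1+p)})$. The final rescaling from $r=7/8$ back to general $r\in(0,7/8)$ uses the natural parabolic scaling $u_{r}(x,t)=u(rx,r^{2}t)$, under which the $|Du|^{p}$-degenerate equation is form-invariant up to relabelling the right-hand side, producing the stated factor $r$.

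The main obstacle I anticipate is the precise bookkeeping of the competing powers of $|\overline{x}-\overline{y}|$ in $D_{2},D_{3},D_{4}$ versus $D_{1}$: one must verify that the choice of $\gamma$ (close to, but strictly less than, $2$) makes $D_{1}$ strictly more singular than every other term for \emph{both} regimes $p\ge 1$ and $0\le p<1$, and that the constants can be absorbed in the correct order ($L_{1}$ fixed by the penalization, then $L_{2}$ chosen large, with no circularity). A secondary technical point is ensuring that $\varphi(s)=s-\omega_{0}s^{\gamma}$ remains monotone and concave on the full range of $|\overline{x}-\overline{y}|$ that can occur ($\le 7/4$), which forces $\omega_{0}$ small in a way that must be tracked through the estimate on $|a_{1}|\simeq L_{2}\varphi'$. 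Once these are pinned down, the argument is a routine variant of Lemma \ref{App:lem1}, and indeed this is exactly where our treatment departs from \cite[Lemma 3.2]{A20} by allowing the spatial penalization exponent $m$ in $|x|^{m}$ to range over $2\le m<2+1/p$ in the subsequent $C^{1/(2+p)}_{t}$ step.
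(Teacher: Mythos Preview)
Your proposal is correct and follows essentially the same route as the paper: the paper's proof consists of a single sentence stating that the argument of Lemma~\ref{App:lem1} is repeated with the choice $\varphi(s)=s-\kappa_{0}s^{\upsilon}$ for $1<\upsilon<2$, which is exactly your $\varphi(s)=s-\omega_{0}s^{\gamma}$. Your discussion of the competing powers, the order of choosing $L_{1}$ then $L_{2}$, and the monotonicity/concavity constraint on $\omega_{0}$ fills in precisely the details the paper omits; the only minor slip is that the rescaling in $r$ is handled at the level of Proposition~\ref{Se2:Prop2} rather than inside this lemma, and your final sentence about the exponent $m$ pertains to Lemma~\ref{App:lem3}, not here.
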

\begin{proof}
The proof of this lemma is actually the same as Lemma \ref{App:lem1}, and we only need to choose $ \varphi(s):= s- s^{\upsilon}\kappa_{0} $ for $ 1 < \upsilon < 2 $ and $ \kappa_{0} > 0 $. For the sake of brevity, we omit these details.
\end{proof}

With Lemmas \ref{Se2:lem1} and \ref{App:lem2} in hand,  $ C_{t}^{1/2+p} $ estimate is stated as follows:
\begin{Lemma}\label{App:lem3}
Under the assumption of Proposition    \ref{Se2:Prop2}. For any $ (x,t), (x,s) \in \mathcal{Q}_{3/4}^{-} $, there exists a positive constant $ \mathrm{C} = \mathrm{C}(p, n, \|f\|_{L^{\infty}(\mathcal{Q}_{1}^{-})}, \|u\|_{L^{\infty}(\mathcal{Q}_{1}^{-})}) $ such that
\begin{equation*}
  |u(x,t)- u(x,s)| \leq \mathrm{C}|t-s|^{\frac{1}{2+p}},
\end{equation*}
\end{Lemma}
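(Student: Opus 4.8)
The goal is a $C^{1/(2+p)}_t$-estimate, and the natural strategy—following the circle of ideas in \cite{A20}—is to compare $u$ at a given time $s$ with suitable spatial barriers built from the Lipschitz bound already established in Lemma~\ref{App:lem2}, and then use the equation together with the comparison principle (Lemma~\ref{Se2:lem1}) to propagate control in the time direction. Concretely, fix $(x_0,s)\in\mathcal{Q}^-_{3/4}$ and, for $r>0$ small (to be optimized at the end), consider on the cylinder $B_r(x_0)\times[s,s+\tau]$ the competitor
\[
  \Psi^{\pm}(x,t):= u(x_0,s)\ \pm\ \Big( \mathrm{L}\,|x-x_0|^{m} + \mathrm{N}\,\frac{t-s}{r^{m}}\cdot r^{m} + \mathrm{M}\,r^{m}\Big),
\]
where $2\le m<2+\tfrac1p$ (the improvement over \cite{A20}, where $m=2$), $\mathrm{L}$ is comparable to the Lipschitz constant $\mathrm{K}:=\mathrm{C}(\|u\|_{L^\infty}+\|u\|_{L^\infty}^{1/(1+p)}+\|f\|_{L^\infty}^{1/(1+p)})$ from Lemma~\ref{App:lem2}, and $\mathrm{N},\mathrm{M}$ are to be chosen. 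On the parabolic boundary $\partial_{\mathrm{par}}(B_r(x_0)\times[s,s+\tau])$ one has $u\le\Psi^+$: on the time-slice $t=s$ this is the Lipschitz estimate $|u(x,s)-u(x_0,s)|\le\mathrm{K}|x-x_0|\le \mathrm{L}|x-x_0|^m/r^{m-1}$ after adjusting $\mathrm{L}$ (here $|x-x_0|\le r$ is used), and on the lateral boundary $|x-x_0|=r$ the term $\mathrm{L} r^m + \mathrm{M} r^m$ dominates $2\|u\|_{L^\infty}$ once $\mathrm{M}$ is large.

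**The core computation.** The key step is to verify that $\Psi^+$ is a viscosity supersolution of $|D\Psi|^pF(D^2\Psi)-\Psi_t=f$ on the cylinder. A direct differentiation gives $D\Psi^+(x,t)=m\,\mathrm{L}\,|x-x_0|^{m-2}(x-x_0)$, so $|D\Psi^+|^p\simeq \mathrm{L}^p|x-x_0|^{(m-1)p}$, while $D^2\Psi^+$ has eigenvalues comparable to $\mathrm{L}|x-x_0|^{m-2}$, whence by \hyperref[F1]{\textbf{(F1)}} and $F(\mathrm{O}_n)=0$ one has $|F(D^2\Psi^+)|\le \Lambda\,n\,m(m-1)\mathrm{L}|x-x_0|^{m-2}$. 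Hence
\[
  |D\Psi^+|^pF(D^2\Psi^+) \le \mathrm{C}(n,\Lambda,m)\,\mathrm{L}^{1+p}\,|x-x_0|^{(m-1)p + m - 2},
\]
and the exponent $(m-1)p+m-2\ge 0$ precisely because $m\ge 2+1/p$ would make it vanish and $m\ge 2$ together with $p\ge 0$ keeps it nonnegative when $m\ge 2$; in fact for $2\le m<2+1/p$ the exponent lies in $[0,\ldots)$ but stays bounded, so on $|x-x_0|\le r\le 1$ this term is bounded by $\mathrm{C}\,\mathrm{L}^{1+p}$. Meanwhile $\Psi^+_t=\mathrm{N}$, so choosing $\mathrm{N}:= \mathrm{C}\,\mathrm{L}^{1+p}+\|f\|_{L^\infty}$ forces $|D\Psi^+|^pF(D^2\Psi^+)-\Psi^+_t\le f$ pointwise, hence in the viscosity sense (the argument being standard for smooth test functions, with the degeneracy locus $x=x_0$ handled as usual since there $D\Psi^+=0$ and one need not test). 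By Lemma~\ref{Se2:lem1}, $u\le\Psi^+$ throughout; evaluating at $x=x_0$, $t=s+\tau$ with the optimal choice $\tau:=r^{2+p}$ (so that $\mathrm{N}\tau\simeq \mathrm{L}^{1+p}r^{2+p}\simeq \mathrm{M}r^m$ after selecting $m$ appropriately, or simply bounding directly) yields $u(x_0,s+\tau)-u(x_0,s)\le \mathrm{C}\,r^m + \mathrm{N}\tau \le \mathrm{C}\,\tau^{m/(2+p)}$. Symmetrically with $\Psi^-$ one gets the lower bound, and reparametrizing $|t-s|=\tau$ gives $|u(x_0,t)-u(x_0,s)|\le \mathrm{C}|t-s|^{1/(2+p)}$ after noting $m/(2+p)\ge 1/(2+p)$ is not quite what we want—so the decisive point is that one takes $m=2$ is \emph{not} optimal; rather the exponent we extract is $1/(2+p)$ by balancing the spatial scale $r$ and the time increment $\tau$ via $\tau\simeq r^{2+p}$, which is the content of the intrinsic scaling.

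**The main obstacle.** The delicate point is the balancing: one must simultaneously ensure $\Psi^+$ dominates $u$ on the initial slice (which wants $\mathrm{L}$ tied to the Lipschitz constant and $r$ small) \emph{and} that $\Psi^+$ remains a supersolution (which, because $|D\Psi^+|^p$ degenerates like $|x-x_0|^{(m-1)p}$ near the vertex, forces the restriction $m<2+1/p$ so that the product $|D\Psi^+|^pF(D^2\Psi^+)$ does not blow up as $|x-x_0|\to 0$—for $m=2$ this product behaves like $|x-x_0|^{p}\to 0$, which is fine, but the flexibility $2\le m<2+1/p$ is what lets one absorb the forcing term cleanly). Reconciling these constraints while tracking the dependence of all constants on $n,p,\lambda,\Lambda,\|u\|_{L^\infty},\|f\|_{L^\infty}$ only—and checking that the vertex $x=x_0$, where the operator is degenerate and $D\Psi^\pm$ vanishes, causes no trouble in the viscosity comparison—is where essentially all the work lies; the rest is the now-routine reduction to $r=7/8$ by the scaling indicated in the remark following Proposition~\ref{Se2:Prop2}, together with a covering of $\mathcal{Q}^-_{3/4}$.
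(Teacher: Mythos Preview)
Your overall strategy is exactly the paper's: build barriers of the form $u(x_0,s)\pm\big(\mathrm{L}|x-x_0|^m+\mathrm{N}(t-s)+\text{const.}\big)$ with $2\le m<2+1/p$, use the spatial Lipschitz bound from Lemma~\ref{App:lem2} together with \hyperref[F1]{\textbf{(F1)}} to check the super/subsolution property, invoke Lemma~\ref{Se2:lem1}, and optimize. But your execution of the boundary comparison has a genuine gap.

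You work on $B_r(x_0)\times[s,s+\tau]$ with $r$ small and to be optimized. On the lateral piece $\{|x-x_0|=r\}$ you need $\mathrm{L}r^m+\mathrm{N}(t-s)+\mathrm{M}r^m\ge u(x,t)-u(x_0,s)$; since the only available bound on the right is $2\|u\|_{L^\infty}$ (time regularity is precisely what you are proving, so you cannot use Lipschitz-in-$x$ plus a time increment without circularity), you are forced to take $\mathrm{M}\ge 2\|u\|_{L^\infty}r^{-m}$. Evaluating at $x=x_0$ then yields $u(x_0,s+\tau)-u(x_0,s)\le \mathrm{M}r^m+\mathrm{N}\tau\ge 2\|u\|_{L^\infty}$, which is useless. (Separately, your initial-slice inequality $\mathrm{K}|x-x_0|\le \mathrm{L}|x-x_0|^m/r^{m-1}$ goes the wrong way for $|x-x_0|<r$ and $m>1$.) The drifting final paragraph, where you reach the exponent $m/(2+p)$ and then retract, is a symptom of this unresolved tension; and the claimed connection between the constraint $m<2+1/p$ and nonnegativity of $(m-1)p+m-2$ is also off---that exponent is already $\ge p\ge 0$ for all $m\ge 2$.

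The paper avoids this by working on the \emph{fixed} ball $B_{3/4}$, so the lateral boundary sits at a definite distance and the $L^\infty$ bound there costs only a universal constant in $\mathrm{M}_2$. The free parameter enters instead through Young's inequality on the initial slice: $\mathrm{K}|x|\le \tfrac{1}{m}(\mathrm{K}|x|/\eta)^m+\tfrac{1}{\tilde m}\eta^{\tilde m}$ with $\tfrac1m+\tfrac1{\tilde m}=1$, giving the barrier $v=u(0,t_0)+\mathrm{M}_1(t-t_0)+\mathrm{M}_2|x|^m+\tfrac{1}{\tilde m}\eta^{\tilde m}$ with $\mathrm{M}_2\sim\eta^{-m}$ and $\mathrm{M}_1$ determined by the supersolution condition. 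Optimizing in $\eta$ (not in a shrinking spatial radius) then delivers the $|t-s|^{1/(2+p)}$ rate. This replacement---fixed domain plus Young's parameter $\eta$ in place of a variable radius $r$---is the missing idea in your argument.
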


\begin{proof}
We shall assert that for $ t_{0} \in [-(3/4)^{2}, 0) $, $\eta > 0 $ and $ 2 \leq m < 2+ \frac{1}{p} $, we can find positive constants $ \mathrm{M}_{1}, \mathrm{M}_{2} $ such that
\begin{equation}\label{App:eq12}
  v(x,t):= u(0,t_{0}) + \mathrm{M}_{1}(t-t_{0}) + \mathrm{M}_{2} |x|^{m}  +  \frac{1}{\widetilde{m}}\eta^{\widetilde{m}}
\end{equation}
is a supersolution of \eqref{Sec2:eq1} in $B_{3/4} \times (t_{0}, 0) $ and $ u \leq v $ on $ \partial_{\mathrm{par}}(B_{3/4} \times (t_{0}, 0]) $, where $ \frac{1}{\widetilde{m}} + \frac{1}{m} = 1 $. On the one hand, for any $ x \in B_{3/4} $, in the spirit of Lemma \ref{App:lem2} and Young's inequality, we have
\begin{equation}\label{App:eq13}
  u(x,t_{0}) - u(0,t_{0}) \leq \mathrm{C}_{\text{Lip}}|x| \leq \frac{1}{m} \left(\frac{\mathrm{C}_{\text{Lip}}|x|}{\eta} \right)^{m} + \frac{1}{\widetilde{m}}\eta^{\widetilde{m}}.
\end{equation}
On the other hand, using the boundedness of $ u $, there holds for $ (x,t) \in \partial B_{3/4} \times [t_{0}, 0) $
\begin{equation}\label{App:eq14}
  u(x,t) - u(0,t_{0}) \leq 2\|u\|_{L^{\infty}(\mathcal{Q}_{1}^{-})} \leq 2\|u\|_{L^{\infty}(\mathcal{Q}_{1}^{-})} \frac{4}{3}|x| \leq  \frac{1}{\widetilde{m}}\eta^{\widetilde{m}} + \frac{1}{m} \left( \frac{8\|u\|_{L^{\infty}(\mathcal{Q}_{1}^{-})}|x|}{3\eta}  \right)^{m}.
\end{equation}
Thus, we choose
\begin{equation*}
  \mathrm{M}_{2}:= \frac{1}{m\eta^{m}} \left(\mathrm{C}_{\text{Lip}}^{m} + \left(\frac{8}{3}\|u\|_{L^{\infty}(\mathcal{Q}_{1}^{-})}\right)^{m}     \right),
\end{equation*}
which gives $ u \leq v $ on $ \partial_{\mathrm{par}}(B_{3/4} \times (t_{0}, 0]) $. Next, in order to determine $ \mathrm{M}_{1} $, we need to handle some terms:
\begin{equation*}
  v_{i} = \mathrm{M}_{2} m |x|^{m-2}x_{i}; \ \ v_{ij} = \mathrm{M}_{2}m|x|^{m-4} [|x|^{2}\delta_{ij} + (m-2)x_{i}x_{j}], \ \  i,j=1,2,\cdots,n.
\end{equation*}
By using the uniform ellipticity of $ F $ and $F(\mathrm{O}_n) = 0$ \,(\hyperref[F1]{\bf (F1)}), we obtain
\begin{align*}
  |Dv|^{p}F(D^{2}v)& = (\mathrm{M}_{2}m)^{p} |x|^{(m-1)p} F(D^{2}v) \leq \Lambda (\mathrm{M}_{2}m)^{p} |x|^{(m-1)p}\mathscr{P}^{+}_{\lambda,\Lambda}(D^{2}v)   \\
  & \leq  \Lambda m (\mathrm{M}_{2}m)^{p} (m+n-2) |x|^{(m-1)p+(m-2)}.
\end{align*}
Hence, taking
$$ \mathrm{M}_{1}:= \Lambda m (\mathrm{M}_{2}m)^{p} (m+n-2) \bigg(\frac{3}{4}\bigg)^{(m-1)p+(m-2)} + \|f\|_{L^{\infty}(\mathcal{Q}_{1}^{-})},                     $$
then the function $ v $ satisfies
\begin{equation*}
  \partial_{t} v = \mathrm{M}_{1} \geq  |Dv|^{p} F(D^{2}v) - f(x,t)
\end{equation*}
in the viscosity sense. By the Comparison Principle (Lemma \ref{Se2:lem1}), we have that $ u(x,t) \leq v(x,t) $ for all $ (x,t) \in B_{3/4} \times (t_{0}, 0) $. In particular, for any $ \eta > 0 $, it follows that
\begin{equation}\label{App:eq15}
  u(0,t) - u(0, t_{0}) \leq \mathrm{C}(m,n,p,\Lambda)\eta^{-mp} (\mathrm{C}_{\text{Lip}}^{m}+\|u\|_{L^{\infty}(\mathcal{Q}_{1}^{-})}^{m})^{p}(t-t_{0})  + \|f\|_{L^{\infty}(\mathcal{Q}_{1}^{-})}(t-t_{0}) + \frac{1}{\widetilde{m}}\eta^{\widetilde{m}}.
\end{equation}
Choosing $$ \eta:= (\mathrm{C}_{\text{Lip}}^{m}+\|u\|_{L^{\infty}(\mathcal{Q}_{1}^{-})}^{m})^{\frac{\alpha}{\widetilde{m}}}|t-t_{0}|^{\frac{1}{(2+p)\widetilde{m}}}, 0 < \alpha < \frac{p}{1+(m-1)p}  $$
in above inequality \eqref{App:eq15}, we immediately get
\begin{align*}
  u(0,t) - u(0, t_{0}) & \leq \mathrm{C}(m,n,p,\Lambda)(\mathrm{C}_{\text{Lip}}^{m}+\|u\|_{L^{\infty}(\mathcal{Q}_{1}^{-})}^{m})^{p[1-(m-1)\alpha]}|t-t_{0}|^{\frac{2+p(2-m)}{2+p}}  \\
&  \ \ \ \ + \frac{1}{\widetilde{m}}(\mathrm{C}_{\text{Lip}}^{m}+\|u\|_{L^{\infty}(\mathcal{Q}_{1}^{-})}^{m})^{\alpha}|t-t_{0}|^{\frac{1}{2+p}} + \|f\|_{L^{\infty}(\mathcal{Q}_{1}^{-})}(t-t_{0})   \\
& \leq \mathrm{C}(m,n,p,\Lambda)(\mathrm{C}_{\text{Lip}}^{m}+\|u\|_{L^{\infty}(\mathcal{Q}_{1}^{-})}^{m})^{p[1-(m-1)\alpha]}   |t-t_{0}|^{\frac{1}{2+p}} +  \|f\|_{L^{\infty}(\mathcal{Q}_{1}^{-})}(t-t_{0})  \\
& \leq \mathrm{C}(m,n,p,\Lambda,\|f\|_{L^{\infty}(\mathcal{Q}_{1}^{-})}, \mathrm{C}_{\text{Lip}}) |t-t_{0}|^{\frac{1}{2+p}},
\end{align*}
where we have used $ 2 \leq m < 2+ \frac{1}{p} $ and $ 0 < \alpha < \frac{p}{1+(m-1)p} $ in the second inequality.

The lower bound follows by comparing with similar barriers, and we end the proof of Lemma \ref{App:lem3}.
\end{proof}

Finally, we are in a position to address the proof of Proposition \ref{Se2:Prop2}.

\begin{proof}[{\bf Proof of Proposition \ref{Se2:Prop2}}]
The proof is finished when we combine Lemmas \ref{App:lem2} and \ref{App:lem3}.
\end{proof}

\vspace{2mm}

\section{Proof of Lemma \ref{Se2:lem1} } \label{Appendix:B}

In this appendix, we are devoted to presenting the proof of Lemma  \ref{Se2:lem1}.

\begin{proof}[{\bf Proof of Lemma \ref{Se2:lem1}}]
Without loss of generality, we assume that $u$ is a strict viscosity subsolution, i.e.,
\begin{equation*}
\widetilde{\Phi}(|Du|) F(D^{2}u) - \partial_{t}u - \lambda_{0}(x,t) f(u) \geq  g(x,t) + \delta   \quad \text{in}  \quad Q_{T},
\end{equation*}
where $ \delta > 0 $ is a small constant. Indeed, if this is not the case, we consider the perturbed function $\omega := u - \frac{\varepsilon}{T-t}$, where $\varepsilon > 0$ is a small constant. Suppose that $\psi \in C^{2,1}(Q_{T})$ touches $u$ from above at $(x_{0}, t_{0})$ with $D\psi(x_{0}, t_{0}) \neq 0$. Then $\varphi := \psi - \frac{\varepsilon}{T-t}$ touches $\omega$ from above at the same point. Since $u$ is a viscosity subsolution, it follows that
\begin{align*}
0 & \leq \widetilde{\Phi}(|D\psi(x_{0}, t_{0})|)F(D^{2} \psi(x_{0}, t_{0})) - \partial_{t} \psi(x_{0}, t_{0}) - \lambda_{0}(x_{0}, t_{0}) f(u(x_{0}, t_{0})) - g(x_{0}, t_{0})       \\
& = \widetilde{\Phi}(|D\varphi(x_{0}, t_{0})|) F(D^{2} \varphi(x_{0}, t_{0})) - \partial_{t} \varphi(x_{0},t_{0}) - \frac{\epsilon}{(T-t)^{2}}-\lambda_{0}(x_{0}, t_{0})f(u(x_{0}, t_{0})) - g(x_{0}, t_{0}),
\end{align*}
which means that
\begin{align*}
\delta \leq   \frac{\epsilon}{(T-t)^{2}}
 \leq \widetilde{\Phi}(|D\varphi(x_{0}, t_{0})|) F(D^{2} \varphi(x_{0}, t_{0}))  - \partial_{t} \varphi(x_{0},t_{0}) - \lambda_{0}(x_{0}, t_{0}) f(u(x_{0}, t_{0})) - g(x_{0}, t_{0})
\end{align*}
with $ \delta = \frac{\epsilon}{T^{2}} $. Thus, $\omega$ is a strict viscosity subsolution of \eqref{G-DCP}.

We now argue by contradiction. Suppose that there exists $(\widehat{x}, \widehat{t}) \in Q_{T}$ such that
\begin{equation*}
\sup_{Q_{T}} (u-v) = u(\widehat{x}, \widehat{t}) - v(\widehat{x}, \widehat{t}) > 0.
\end{equation*}
We define the auxiliary function
\begin{equation*}
\Theta_{j}(x,y,t,s) := u(x,t) - v(y,s) - \Psi_{j}(x,y,t,s),
\end{equation*}
where
\begin{equation*}
\Psi_{j}(x,y,t,s) := \frac{j}{l}|x-y|^{l} + \frac{j}{2}(t-s)^{2},
\quad \text{with} \quad l > \max \Big\{2, \frac{2+p}{1+p}, \frac{2+q}{1+q}\Big\}.
\end{equation*}
Assume that $(x_{j}, y_{j}, t_{j}, s_{j}) \in \overline{Q} \times \overline{Q} \times (0,T)^{2}$ satisfies
\begin{equation*}
\sup_{\overline{Q} \times \overline{Q} \times (0,T)^{2}} \Theta_{j}(x,y,t,s)
= \Theta_{j}(x_{j}, y_{j}, t_{j}, s_{j}).
\end{equation*}
It follows that $(x_{j}, y_{j}, t_{j}, s_{j}) \to (\widehat{x}, \widehat{x}, \widehat{t}, \widehat{t})$ as $j \to \infty$, and that
$(x_{j}, y_{j}, t_{j}, s_{j}) \in Q \times Q \times (0,T)^{2}$.
We now show that $x_{j} \neq y_{j}$ for $ j \gg 1$. Indeed, if $x_{j} = y_{j}$, next we want to arrive at a contradiction. By the choice of $(x_{j}, y_{j}, t_{j}, s_{j})$ we obtain
\begin{equation*}
u(x_{j}, t_{j}) - v(y_{j}, s_{j}) - \Psi_{j}(x_{j},y_{j},t_{j},s_{j})
\geq u(x,t) - v(y_{j}, s_{j}) - \Psi_{j}(x,y_{j},t,s_{j}).
\end{equation*}
Let
\begin{equation*}
\phi(x,t) := u(x_{j}, t_{j}) - \Psi_{j}(x_{j},y_{j},t_{j},s_{j}) + \Psi_{j}(x,y_{j},t,s_{j}),
\end{equation*}
so that $u(x,t) - \phi(x,t)$ attains a local maximum at $(x_{j}, t_{j})$. A straightforward computation yields
\begin{align*}
\partial_{t}\phi &= j(t-s_{j}),
\qquad D\phi = j|x-y_{j}|^{l-2}(x-y_{j}),\\
D^{2}\phi(x,t) &= j(l-2)|x-y_{j}|^{l-2}\frac{x-y_{j}}{|x-y_{j}|} \otimes \frac{x-y_{j}}{|x-y_{j}|}
+ j|x-y_{j}|^{l-2}\mathbf{I}{\rm{d}_{n}}.
\end{align*}
By the uniform ellipticity of $F$ and $F(\mathrm{O}_n) = 0$\,(\hyperref[F1]{\bf (F1)}), we have
\begin{equation*}
F(D^{2}\phi(x,t_{j})) \leq \Lambda j(l+n-2)|x-y_{j}|^{l-2}.
\end{equation*}
Since $u$ is a strict viscosity subsolution of \eqref{G-DCP}, it follows that
\begin{align*}
0 < \delta  &  \leq   \lim_{\substack{(x,t) \to (x_{j}, t_{j})\\ x \neq x_{j}=y_{j}}}
\bigg (\widetilde{\Phi}(|D \phi(x,t)|)  F(D^{2}\phi(x,t)) - \partial_{t} \phi(x,t) - \lambda_{0}(x,t) f(u(x,t)) -  g(x,t)       \bigg ) \\
& \leq \lim_{\substack{(x,t) \to (x_{j}, t_{j})\\ x \neq x_{j}=y_{j}}}
\bigg ( \Lambda(l+n-2) \big\{j^{1+p}|x-y_{j}|^{(l-1)p+l-2} +a(x,t)j^{1+q}|x-y_{j}|^{(l-1)q+l-2}   \big\}  \\
  & \qquad   \qquad  \qquad \qquad - j(t - s_{j}) - \lambda_{0}(x,t) f(u(x,t)) - g(x,t) \bigg ).
\end{align*}
Since $(l-1)p + l - 2 > 0$ and $(l-1)q + l - 2 > 0 $ by definition of $l$, we obtain
\begin{equation}\label{Se2:eq6}
 j(t_{j}-s_{j}) < -\lambda_{0}(x_{j},t_{j}) f(u(x_{j},t_{j})) - g(x_{j}, t_{j})- \delta.
\end{equation}
Analogously, one also finds that
\begin{equation}\label{Se2:eq7}
j(t_{j}-s_{j}) \geq - \lambda_{0}(x_{j},t_{j}) f(v(x_{j},t_{j})) - \widetilde{g}(x_{j}, t_{j}).
\end{equation}
Now we combine \eqref{Se2:eq6} and \eqref{Se2:eq7} to obtain
\begin{align*}
  0= j(t_{j}-s_{j}) - j(t_{j}-s_{j}) & < \lambda_{0}(x_{j},t_{j}) \big[f(v(x_{j}, t_{j}))- f(u(x_{j}, t_{j}))\big] + \widetilde{g}(x_{j}, t_{j})- g(x_{j}, t_{j}) - \delta  \\
 & \leq  \widetilde{g}(x_{j}, t_{j})- g(x_{j}, t_{j})  - \delta <0,
\end{align*}
where we have used the fact $ u(\widehat{x}, \widehat{t}) > v(\widehat{x}, \widehat{t}) $, the monotonicity and conuinity of $ f $ in the second inequality. This leads to a contradiction. Therefore, we conclude that $ x_{j} \neq y_{j} $ for $ j \gg 1 $, as claimed.

By Ishii-Lions Lemma \cite[Theorem 8.3]{CIL92}, there exist $ \mathrm{X_{j}}, \mathrm{Y_{j}} \in \mathrm{Sym}(n) $ such that
\begin{align}\label{Se2:eq1}
\begin{split}
& (\partial_{t} \Psi_{j}, D_{x} \Psi_{j}, \mathrm{X_{j}}) \in \overline{\mathcal{P}}^{2,+} u(x_{j}, t_{j}),  \\
& (-\partial_{s} \Psi_{j}, -D_{y} \Psi_{j}, \mathrm{Y_{j}})  \in \overline{\mathcal{P}}^{2,-} v(y_{j}, s_{j})
\end{split}
\end{align}
and the following matrix inequality holds
\begin{equation*}
\begin{pmatrix}
\mathrm{X_{j}}  &   0   \\
0  &    -\mathrm{Y_{j}}
\end{pmatrix}
\leq
\begin{pmatrix}
\mathrm{Z}   &   -\mathrm{Z}   \\
-\mathrm{Z}  &    \mathrm{Z}
\end{pmatrix}
 + \frac{2}{\mu_0}
 \begin{pmatrix}
\mathrm{Z}^{2}   &   -\mathrm{Z}^{2}   \\
-\mathrm{Z}^{2}  &    \mathrm{Z}^{2}
\end{pmatrix}, \ \  0 < \mu_0 \ll 1,
\end{equation*}
with
\begin{equation*}
  \mathrm{Z}^{2}:= j^{2}|x_{j}-y_{j}|^{2l-4} \textbf{I}{\rm{d}_{n}} + l(l-2) j^{2}|x_{j}-y_{j}|^{2l-6} (x_{j}-y_{j}) \otimes (x_{j}-y_{j}).
\end{equation*}
From \cite[Theorem 3.2]{DDS25}, we see that $ \mathrm{X_{j}} \leq \mathrm{Y_{j}} $. Before two viscosity inequalities are given, we denote
\begin{equation}\label{Se2:eq2}
  \eta_{j}:= D_{x} \Psi_{j} = - D_{y} \Psi_{j} = j|x_{j}-y_{j}|^{l-2}(x_{j}-y_{j}).
\end{equation}
We combine \eqref{Se2:eq1} and \eqref{Se2:eq2} to get
\begin{equation*}
  \widetilde{\Phi}(|\eta_{j}|)F(\mathrm{X_{j}}) - \partial_{t} \Psi_{j} - \lambda_{0}(x_{j},t_{j}) f(u(x_{j},t_{j})) - g(x_{j},t_{j}) \geq \delta   \quad \text{in}  \quad  Q_{T},
\end{equation*}
and
\begin{equation*}
  \widetilde{\Phi}(|\eta_{j}|)F(\mathrm{Y_{j}}) + \partial_{s} \Psi_{j} - \lambda_{0}(y_{j},s_{j}) f(v(y_{j},s_{j})) - \widetilde{g}(y_{j},s_{j}) \leq 0 \quad \text{in} \quad  Q_{T}.
\end{equation*}
Subtracting these two viscosity inequalities above, it reads
\begin{align*}
 \underbrace{\widetilde{\Phi}(|\eta_{j}|)(F(\mathrm{X_{j}}) - F(\mathrm{Y_{j}))}}_{:=D_{1}} & + \underbrace{ \lambda_{0}(y_{j},s_{j}) f(v(y_{j},s_{j})) - \lambda_{0}(x_{j},t_{j}) f(u(x_{j},t_{j}))}_{:=D_{2}} \underbrace{- \partial_{t} \Psi_{j} - \partial_{s} \Psi_{j}}_{:=D_{3}}    \\
 & + \underbrace{\widetilde{g}(y_{j},s_{j}) - g(x_{j}, t_{j})}_{:=D_{4}}  \geq \delta > 0.
\end{align*}

The remaining part mainly involves estimating four terms $ D_{i} $, $ i=1,2,3,4$.
\vspace{1mm}

{\boxed{\text{The estimate of} \ D_{1}.}} In view of $ \mathrm{X_{j}} \leq \mathrm{Y_{j}} $ and \hyperref[F1]{\bf (F1)}, it follows that $ D_{1} \leq 0 $.

\vspace{1mm}

{\boxed{\text{The estimate of} \ D_{2}.}} Noticed that $ (x_{j}, y_{j}, t_{j}, s_{j}) \rightarrow  (\widehat{x}, \widehat{x}, \widehat{t}, \widehat{t}) $ as $ j \rightarrow \infty $, $ \lambda_{0} > 0 $, $ u(\widehat{x}, \widehat{t}) > v(\widehat{x}, \widehat{t})$ and and monotonicity of $ f$, hence we get $ D_{2} \leq 0 $ as $ j \rightarrow \infty $.

\vspace{1mm}

{\boxed{\text{The estimate of} \ D_{3}.}} A direct calculation yields $ D_{3} = 0 $.

\vspace{1mm}

{\boxed{\text{The estimate of} \ D_{4}.}} Since $ \widetilde{g} \leq g $ in $ Q_{T} $, then $ D_{4} \leq 0 $ as $ j \rightarrow \infty $.

\vspace{1mm}

Consequently, it concludes
\begin{equation*}
  0< \delta \leq D_{1} + D_{2}  +  D_{3} + D_{4} \leq 0, \quad \text{as} \quad j \rightarrow \infty,
\end{equation*}
which is a contradiction. This closes the proof of this lemma.
\end{proof}

\end{sloppypar}

\begin{thebibliography}{99}

      \bibitem{ALT16}
      D.J.~Ara\'{u}jo, R.~Leit\~{a}o and E.V.~Teixeira, Infinity Laplacian equation with strong absorptions, \newblock {\em J. Funct. Anal.}, \textbf{270} (2016), 2249--2267.


      \bibitem{AT24}
      D.J.~Ara\'{u}jo and R.~Teymurazyan, Fully nonlinear dead-core systems, \newblock {\em J. Funct. Anal.}, \textbf{287} (2024), no. 9, Paper No. 110586, 23 pp.



      \bibitem{ART15}
      D.J.~Ara\'{u}jo, G.~Ricarte and E.V.~Teixeira, Geometric gradient estimates for solutions to degenerate elliptic equations, \newblock {\em Calc. Var. Partial Differential Equations.},  \textbf{53} (2015), 605--625.



      \bibitem{A20}
      A.~Attouchi, Local regularity for quasi-linear parabolic equations in non-divergence form, \newblock {\em Nonlinear Anal.}, \textbf{199} (2020), 112051, 28 pp.



     \bibitem{BBO24}
     S.~Baasandorj, S.S.~Byun and J.~Oh, Second derivative $ L^{\delta}$ estimates for a class of singular fully nonlinear elliptic equations, \newblock {\em Nonlinear Anal.}, \textbf{249} (2024), Paper No. 113630, 15 pp.

     \bibitem{BS84}
     C.~Bandle and I.~Stakgold, The formation of the dead core in parabolic reaction-diffusion problems, \newblock {\em Trans. Amer. Math.Soc.}, \textbf{286} (1984) 275--293.



     \bibitem{DDS25}
J. da S. Bessa, J.V.~da Silva and G.S. S\'{a}, Regularity estimates to quasi-linear parabolic equations in non-divergence form with non-homogeneous signature, {\em arXiv:2503.03886.}, 2025.

\bibitem{BJrDaSR2023} E.C. Bezerra J\'{u}nior, J.V. da Silva and G.C. Ricarte, Fully nonlinear singularly perturbed models with non-homogeneous degeneracy, \newblock {\em Rev. Mat. Iberoam.}, \textbf{39} (2023), 123-164.


  
\bibitem{JSNS24}
      E.C.  Bezerra J\'{u}nior, J.V. da Silva, T. Nascimento and G. S\'{a}, Improved regularity estimates for H\'{e}non-type equations driven by the $ \infty$-Laplacian, {\em arXiv:2410.19970.}, 2024. 




     \bibitem{BKO24}
     S.S. Byun, H. Kim and J. Oh, Interior $ W^{2,\delta}$ type estimates for degenerate fully nonlinear elliptic equations with $ L^{n}$ data, \newblock {\em J. Funct. Anal.}, \textbf{289} (2025), no. 6, Paper No. 111007, 37 pp.



      \bibitem{CC95}
      L. Caffarelli and X. Cabr\'{e}, Fully nonlinear elliptic equations, volume 43. American Mathematical Society, 1995.



      \bibitem{CFR25}
      L. Caffarelli, A. Farah and D. Restrepo, On the Grad-Mercier equation and Semilinear Free Boundary Problems, {\em arXiv:2504.12548.}, 2025.



      \bibitem{CW03}
      H.J. Choe and G.S. Weiss, A semilinear parabolic equation with free boundary, \newblock {\em Indiana Univ. Math. J.}, \textbf{52} (2003), 19--50.

      \bibitem{CIL92}
       M.G. Crandall, H. Ishii and P.L. Lions, User's guide to viscosity solutions of second order partial differential equations, \newblock {\em Bull. Amer. Math. Soc.}, \textbf{27} (1992), 1--67.

       \bibitem{DaLio04}
       F. Da Lio, Remarks on the strong maximum principle for viscosity solutions to fully nonlinear parabolic equations, \newblock {\em Commun. Pure Appl. Anal.}, \textbf{3} (2004), 395--415.




      \bibitem{SLR21}
       J.V. da Silva, R.A. Leitão J\'{u}nior and G.C. Ricarte, Geometric regularity estimates for fully nonlinear elliptic equations with free boundaries, \newblock {\em Math. Nachr.}, \textbf{294} (2021), 38--55.



   \bibitem{SO19}
       J.V. da Silva and P. Ochoa, Fully nonlinear parabolic dead core problems, \newblock {\em Pacific J. Math.}, \textbf{300} (2019), 179--213.

      \bibitem{SOS18}
      J.V. da Silva, P. Ochoa and A. Silva, Regularity for degenerate evolution equations with strong absorption, \newblock {\em  J. Differential Equations.}, \textbf{264} (2018), 7270--7293.



\bibitem{DaSRic2020}
J.V. da Silva, and G.C.  Ricarte,
Geometric gradient estimates for fully nonlinear models with non-homogeneous degeneracy and applications, \newblock {\em Calc. Var. Partial Differential Equations.}, \textbf{59} (2020), no. 5, Paper No. 161, 33 pp.



     \bibitem{daSRS19} J.V. da Silva, J.D. Rossi, and A.M. Salort, Regularity properties for $p$-dead core problems and their asymptotic limit as $p\to \infty$, \newblock {\em J. Lond. Math. Soc.}, \textbf{99} (2019), 69--96.




       \bibitem{daSSS25}
       J.V. da Silva, M.S. Santos and M. Soares, Higher regularity estimates for solutions to $ \infty$-Laplacian-type models, {\em arXiv:2504.18994.}, 2025.



       \bibitem{ST17}
     J.V. da Silva and E.V. Teixeira, Sharp regularity estimates for second order fully nonlinear parabolic equations, \newblock {\em Math. Ann.}, \textbf{369} (2017), 1623--1648.




       \bibitem{DT20}
      N.M. Diehl and R. Teymurazyan, Reaction-diffusion equations for the infinity Laplacian, \newblock {\em Nonlinear Anal.}, \textbf{199} (2020), 111956, 12 pp.



       \bibitem{GO25}
       A. Goffi, On the strong maximum principle for fully nonlinear parabolic equations of second order, \newblock {\em Proc. Amer. Math. Soc.}, \textbf{153} (2025), 1575--1583.




   \bibitem{Guo2013}  J.S. Guo,
On the dead-core rates for a parabolic equation with strong absorption, \newblock {\em Bull. Inst. Math. Acad. Sin. (N.S.).}, \textbf{8} (2013), 431--444.



        \bibitem{CL13}
        C. Imbert and L. Silvestre, $ C^{1,\alpha} $ regularity of solutions of some degenerate fully non-linear elliptic equations, \newblock {\em Adv. Math.}, \textbf{233} (2013), 196--206.




       \bibitem{LLY24}
       K.A. Lee, S.C. Lee and H. Yun, $ C^{1,\alpha}$-regularity for solutions of degenerate/singular fully nonlinear parabolic equations, \newblock {\em J. Math. Pures Appl.}, \textbf{181} (2024), 152--189.




      \bibitem{LLYZ25}
      S.C. Lee, Y. Lian, H. Yun and K. Zhang, Time derivative estimates for parabolic $ p$-Laplace equations and applications to optimal regularity, {\em arXiv:2503.04384.}, 2025.




      \bibitem{Sh03}
     H. Shahgholian, Analysis of the free boundary for the $ p$-parabolic variational problem ($ p \geq 2$), \newblock {\em Rev. Mat. Iberoamericana.}, \textbf{19} (2003), 797--812.







\bibitem{Stakgold1986} I. Stakgold, Reaction-diffusion problems in chemical engineering.Nonlinear diffusion problems (Montecatini Terme, 1985), 119--152. Lecture Notes in Math., 1224 Springer-Verlag, Berlin, 1986 ISBN:3-540-17192-4.



      \bibitem{T16}
      E.V. Teixeira, Regularity for the fully nonlinear dead-core problem, \newblock {\em Math. Ann.}, \textbf{364} (2016), 1121--1134.



     \bibitem{WYJ24}
     J. Wang, Y. Yin and F. Jiang, Regularity of solutions to degenerate normalized $ p$-Laplacian equation with general variable exponents, \newblock {\em Potential Anal.}, \textbf{63} (2025), 1963--2000.



     \bibitem{WJ24}
     J. Wang and F. Jiang, Regularity of solutions for degenerate or singular fully nonlinear integro-differential equations, \newblock {\em Commun. Contemp. Math.}, (2025),          https://doi.org/10.1142/S0219199725500804.



    
     \bibitem{WJ25}
     J. Wang and F. Jiang, Improved regularity estimates for degenerate or singular fully nonlinear dead-core systems and H\'{e}non-type equations, {\em arXiv:2502.10099.}, 2025.  



     \bibitem{1W92}
    L. Wang, On the regularity theory of fully nonlinear parabolic equations. I., \newblock {\em Comm. Pure Appl. Math.}, \textbf{45} (1992), 27--76.




      \bibitem{2W92}
     L. Wang, On the regularity theory of fully nonlinear parabolic equations. II., \newblock {\em Comm. Pure Appl. Math.}, \textbf{45} (1992), 141--178.



     \bibitem{3W92}
     L. Wang, On the regularity theory of fully nonlinear parabolic equations. III., \newblock {\em Comm. Pure Appl. Math.}, \textbf{45} (1992), 255--262.



\bibitem{WH2022} Y. Wu and H. Yu, On the fully nonlinear Alt-Phillips equation, \newblock {\em Int. Math. Res. Not}. IMRN 2022, no. 11, 8540--8570.


\end{thebibliography}
\end{document}